\definecolor{webbrown}{rgb}{0.65, 0.16, 0.16}
\numberwithin{equation}{section}
\newcommand{\hhh}{\text{{\LARGE {\<h>}}}}
\newcommand{\MMM}[2]{\overline{\mathcal{M}}_{#1, #2} \!}
\newcommand{\oM}{\overline{\mathcal{M}}}
\newcommand{\ii}{\mathsf{i}}
\newcommand{\ZZ}{\mathbb{Z}}
\newcommand{\QQ}{\mathbb{Q}}
\newcommand{\Ch}{{ \Omega}}
\newcommand{\DR}{\mathrm{DR}}
\newcommand{\suma}{\mathbf{a}}
\DeclareMathOperator{\newe}{new}
\DeclareMathOperator{\Coeff}{Coeff}
\DeclareMathOperator{\nod}{nod}
\DeclareMathOperator{\Stab}{Stab}
\DeclareMathOperator{\rel}{rel}
\theoremstyle{definition}
\newtheorem{defn}{Definition}[section]
\newtheorem{rem}[defn]{Remark}
\newtheorem{ex}[defn]{Example}
\newtheorem{notation}[defn]{Notation}
\theoremstyle{plain}
\newtheorem{thm}{Theorem}
\newtheorem{prop}[defn]{Proposition}
\newtheorem{conj}{Conjecture}
\newtheorem{lem}[defn]{Lemma}
\newtheorem{cor}[defn]{Corollary}
\DeclareFontFamily{U}{cbgreek}{}
\DeclareFontShape{U}{cbgreek}{m}{n}{
        <-6>    grmn0500
        <6-7>   grmn0600
        <7-8>   grmn0700
        <8-9>   grmn0800
        <9-10>  grmn0900
        <10-12> grmn1000
        <12-17> grmn1200
        <17->   grmn1728
      }{}
\DeclareFontShape{U}{cbgreek}{bx}{n}{
        <-6>    grxn0500
        <6-7>   grxn0600
        <7-8>   grxn0700
        <8-9>   grxn0800
        <9-10>  grxn0900
        <10-12> grxn1000
        <12-17> grxn1200
        <17->   grxn1728
      }{}
\newcommand{\normalorbold}{%
  \ifnum\pdf@strcmp{\math@version}{bold}=\z@ bx\else m\fi
}
\newcolumntype{C}[1]{>{\centering\let\newline\\\arraybackslash\hspace{0pt}}m{#1}}
\title{Cohomological representations of quantum tau functions}
\author{Xavier Blot}
\address{X.~B.: Korteweg-de Vriesinstituut voor Wiskunde, Universiteit van Amsterdam, Postbus 94248, 1090 GE Amsterdam, Nederland}
\email{x.j.c.v.blot@uva.nl}
\author{Danilo Lewa\'{n}ski}
\address{D.~L.: Universit\`a di Trieste, Dipartimento MIGe, via Valerio 12/1, 34133 Trieste, Italia}
\email{danilo.lewanski@units.it}
\author{Sergey Shadrin}
\address{S.~S.: Korteweg-de Vriesinstituut voor Wiskunde, Universiteit van Amsterdam, Postbus 94248, 1090 GE Amsterdam, Nederland}
\email{s.shadrin@uva.nl}
\date{}
\begin{document}



\begin{abstract}
In 2016, Buryak and Rossi introduced the quantum Double Ramification (DR) hierarchies which associate a quantum integrable hierarchy to any Cohomological Field Theory (CohFT). Shortly after, they introduced, in collaboration with Dubrovin and Gu\'er\'e, the quantum tau functions of these hierarchies. In this work, we study  quantum tau functions associated to a specific solution called the topological solution. We provide two cohomological representations for the correlators of these tau functions. The first representation involves an analog in the quantum setting of the $A$-class of the DR-DZ equivalence.  The second representation, valid for CohFT of low degree, involves the so-called $\Omega$-classes. Furthermore, we establish the string and dilaton equations for these tau functions, and present certain vanishing of their correlators.


\end{abstract}

\maketitle

\vspace{1cm}
\tableofcontents

\section{Introduction}\label{sec:intro}

Over the past 30 years, moduli spaces of curves and integrable systems have proven a rich and fruitful interaction, in which the results of each of the two fields complete the understanding of the other. The foundational result establishing this link can be traced back to Witten's conjecture in 1991 \cite{wit-1}, shortly after proved by Kontsevich \cite{kon}. It states that the coefficients of the tau function (of topological type) of the Korteweg--de Vries hierarchy are represented by intersection numbers of the so-called $\psi$-classes in the moduli space of curves. There are now several different proofs that use principally different geometric ideas; the most recent proof that uses the methods close to the ones of the present paper as well as a short overview of other approaches is available in~\cite{AHIS}.

\medskip

Witten-Kontsevich result turned out to be just the tip of an iceberg which generalises to the interaction between two fundamental concepts \textemdash\; on the side of the moduli spaces, the concept of cohomological field theories~\cite{KM94} (in short, CohFTs), which are internally coherent collections of cohomological classes simultaneously defined on all moduli spaces of stable curves $\oM_{g,n}$; on the side of integrable systems, the concept of integrable hierarchies of evolutionary Hamiltonian equations, along with a key object defined within this framework: tau functions. 

\medskip

A first construction of Hamiltonian integrable hierarchy of PDEs associated to semi-simple CohFT was introduced in~\cite{BPS1} as an interpretation and a generalization of the 2001 construction of Dubrovin and Zhang \cite{DZ}. The former hierarchy is such that its tau function (of topological type)  is the potential of the CohFT, that is the correlators are given by intersection numbers of the CohFT with $\psi$-classes over $\oM_{g,n}$.


\medskip

A second construction was given by Buryak in 2014 \cite{Bur}. It associates to a CohFT a Hamiltonian integrable hierarchy of PDEs my means of one 
specific cohomology class: the double ramification cycle. This hierarchy is called the DR hierarchy.
The DR hierarchy and the DZ hierarchy had been conjectured to be (normal Miura) equivalent in \cite{Bur,BDGR1} in the semi-simple case and proved to be so in \cite{DRDZ}. An extension to general non-semisimple setup is also available~\cite{BDGR1,BGR19} and to F-CohFT~\cite{BS22}, and their proofs are based on the argument of \cite{DRDZ} with an additional result obtained in~\cite{BSS}.

\medskip

In the context of the DR hierarchies, the geometrical representation of tau functions (of topological type) is more complicated. In \cite{BDGR1}, the authors represent the correlators of these tau functions associated to the CohFT $c_{g,n}$ by intersection numbers over $\oM_{g,n}$ obtained by intersecting the CohFT $c_{g,n}$ with a certain class that does not depend on the CohFT, called the $A$-class. The $A$-class was initially given as a  sum over a stable trees, decorated with double ramification cycles on all of their vertices. Later, in \cite{DRDZ} the $A$-class was realised geometrically as the virtual fundamental class of the moduli space of stable maps to $\mathbb{P}^1(\mathbb{C})$ further intersected with certain common Chern classes.


\medskip

In 2016 Buryak and Rossi \cite{BR16-quantum} have quantised the DR hierarchy. In this framework, a definition of quantum tau functions was  proposed in \cite{BDGR20}. This definition imitates its classical counterpart, however the significance of this object and the interpretation of its coefficients was unknown. The first study of quantum tau functions was performed in \cite{Blot}, where the author proves that certain coefficients are given in terms of one-part double Hurwitz numbers. The proof was simplified in \cite{Blot-Buryak}.

\medskip

	
The main goal of this paper is to give two cohomological representations for the correlators of the quantum tau functions (of topological type) of the DR hierarchies. 
\begin{enumerate}
	\item A first representation, valid for all CohFTs, involves the virtual fundamental class of the moduli space of stable rubber maps to $\mathbb{P}^1(\mathbb{C})$  with $\psi$-classes pulled back from the Losev-Manin space and intersected with the full Hodge class. This representation generalises in the quantum context the one of \cite{BDGR1}, and the class involved is called the quantum $A$-class. 
	\item The second representation, valid for CohFT of low enough degree, involves a parametrisation of the so-called $\Omega$-class intersected with the full Hodge class. This second representation generalises the representation of \cite{Blot}. More explicitly, the specific numbers studied in \emph{op. cit.} are represented as intersection number of the $\Omega$-class only, which by the orbifold ELSV formula \cite{JPT} through the results of \cite{Do-Lew, LPSZ} provides the one-part double Hurwitz numbers.
\end{enumerate}

\medskip

Comparing these two representations at the classical level leads to the formulation of a conjecture between the classical $A$-class and another class given by a sum over trees decorated with the parametrisation of the $\Omega$-class involved. This conjecture is formulated in \cite{BLRS} and proved in \cite{BLS-Omega}. 

\medskip

We also prove that quantum tau functions satisfy a deformation of the string equation, and a deformation of the dilaton equation. The latter follows from the representation of quantum correlators in terms of the quantum A-class, and resolve a conjecture of \cite{Blot}. Finally, we  present some vanishing results for the correlators of the quantum tau functions.


\medskip

\subsection{Outline of the paper}

In \Cref{sec:QuantumDRhierarchy} we provide the necessary background on the DR hierarchies and their tau functions. In \Cref{sec:Omega:classes} we provide the necessary background on $\Omega$-classes and their specialisations.
In \Cref{sec:main} we state the main results, which we prove in the remaining sections.


%
%
%

\subsection{The topological tau function of the quantum DR hierarchy}
\label{sec:QuantumDRhierarchy}
We introduce the topological tau functions for the quantum DR hierarchies following \cite{BDGR20}. Note that we adopt a different choice for the constant term of the two-point function, as detailed in \Cref{def:two-point}.
\smallskip

\subsubsection{Cohomological Field Theories with unit}

The constructions and results of this paper work for an arbitrary Cohomological Field Theory (CohFT) with unit. We fix the notations for such a CohFT once and for all. %

\begin{notation}
\label{notation CohFT}We fix a triplet $\left(V,\eta,\bm{1}\right)$, where $V$ is a $\mathbb{C}$-vector space of dimension $N$, $\eta$ is a symmetric nondegenerate bilinear form and $\bm{1}$ is a non-zero distinguished element of $V$. We fix a basis $\left(v_{1},\dots,v_{N}\right)$ of $V$ such that $v_{1}=\bm{1}$. We denote $\eta_{\alpha\beta}:=\eta\left(v_{\alpha},v_{\beta}\right)$ and $\eta^{\alpha\beta}:=\eta^{-1}\left(v_{\alpha},v_{\beta}\right)$. In this text, a Greek index will be an integer between $1$ and $N$, furthermore we use the convention of implicit sum over repeated Greek indices (but not over Latin indices). We fix a cohomological field theory with unit $\left(c_{g,n}\right)_{2g-2+n>0}$ associated to $\left(V,\eta,\bm{1}\right)$ and denote by 
\begin{align}
	c_{g,n;k}
\end{align}
	its cohomological degree $2k$. We introduce a formal variable $\mu$ parametrising its cohomological degree and denote  
\begin{align}
	c_{g,n}^{[\mu]} \coloneqq \sum_{k \geq 0} \mu^k c_{g,n;k} \in H^{*}(\overline{\mathcal{M}}_{g,n},\mathbb{Q})[\mu]. 
\end{align}
Notice that $c_{g,n}^{[\mu]}$ only involves even cohomological degrees. We made this choice because in this paper the CohFT only play a role as intersection numbers after being  intersected with classes of even degree. Therefore its odd degree contribution is killed.
\end{notation}

\smallskip

\subsubsection{Differential polynomials and local functionals}

We introduce two rings: the ring of differential polynomials and the ring of local densities. The Hamiltonian densities of the DR hierarchy belong to the former, while the Hamiltonians belong to the latter.
\begin{defn}
The \emph{differential polynomial ring} $\mathcal{A}_{N}$ is $\mathbb{C}\left[\left[u_{0}^{*}\right]\right]\left[u_{>0}^{*}\right]\left[\left[\epsilon,\hbar,\mu\right]\right]$, i.e. the ring of power series in $\epsilon$, $\hbar$ and $\mu$ whose coefficients whose coefficients are polynomials in $u_{k}^{\alpha}$ , for $k>0$ and $1\leq\alpha\leq N$, and power series in $u_{0}^{\alpha}$, for $1\leq\alpha\leq N$. Recall that $N$ is the dimension of $V$ in \Cref{notation CohFT}. We make $\mathcal{A}_{N}$ into a graded algebra by assigning $\deg u_{k}^{\alpha}=k$, $\deg\epsilon=-1$, $\deg\hbar=-2$ and $\deg \mu =0$. 
\end{defn}

In order to quantize the hierarchy, is it natural to express the differential polynomial using another set of variables. Let $\mathcal{B}_{N}:=[p_{>0}^{*}][[p_{\leq0}^{*},\epsilon,\hbar,\mu]][[e^{\ii x},e^{-\ii x}]]$ be the algebra of power series in $e^{\ii x},e^{-\ii x}\epsilon,\hbar,\mu$ and $p_{j}^{\alpha}$ for $j\leq0$ and $1\leq\alpha\leq n$ with coefficients in the complex polynomials in the variables $p_{j}^{\alpha}$ for $j>0$ and $1\leq\alpha\leq n$. Define the morphism $\phi:\mathcal{A}_{N}\hookrightarrow\mathcal{B}_{N}$ defined on the generators as 
\begin{align}
	u_{s}^{\alpha}\rightarrow\sum_{a\in\mathbb{Z}}\left(\ii a\right)^{s}p_{a}^{\alpha}e^{\ii ax}.
\end{align}
Notice that this map is injective. Therefore there are two ways to write a differential polynomial: as an element of $\mathcal{A}_{N}$ using the $u$-variables, or an element of $\phi\left(\mathcal{A}_{N}\right)$ using the $p$-variables.

We then define the operator $\partial_{x}:\mathcal{B}_{N}\rightarrow\mathcal{B}_{N}$ by the usual derivative with respect to $x$. The restriction of this operator to $\phi\left(\mathcal{A}_{N}\right)$ in the $u$-variables is given by $\partial_{x}:\mathcal{A}_{N}\rightarrow\mathcal{A}_{N}$ as $\sum_{i\geq0}u_{i+1}^{\alpha}\frac{\partial}{\partial u_{i}^{\alpha}},$ recalling that the sum over Greek indices is implicit. 

Finally, let $\overline{\mathcal{B}}_{N}:=[p_{>0}^{*}][[p_{\leq0}^{*},\epsilon,\hbar]]$ and define the integration map
\begin{align}
\int\cdot \, dx:\mathcal{B}_{N} & \rightarrow\overline{\mathcal{B}}_{N}\\
f & \rightarrow{\rm \Coeff}_{\left(e^{\ii x}\right)^{0}}\left(f-f\vert_{p_{*}^{*}=0}\right),\nonumber
\end{align}
where ${\rm \Coeff}_{\left(e^{\ii x}\right)^{0}}$ stands for extracting the coefficient of $\left(e^{\ii x}\right)^{0}$ in the element of $\mathcal{B}_{N}$, and $p_{*}^{*}=0$ stands for evaluating $p_{i}^{\alpha}=0$ for $i\geq0$ and $1\leq\alpha\leq N$. We will also use the notation $\overline{f}:=\int fdx$. Notice that the kernel of the restricted map $\int:\mathcal{A}_{N}\rightarrow\overline{\mathcal{B}}_{N}$ is ${\rm Im}\left(\partial_{x}:\mathcal{A}_{N}\rightarrow\mathcal{A}_{N}\right)\oplus\mathbb{C}[[\epsilon,\hbar,\mu]]$. Thus we identify the image $\int\mathcal{A}_{N}dx$ with the quotient $\mathcal{A}_{N}/{\rm Im}\left(\partial_{x}:\mathcal{A}_{N}\rightarrow\mathcal{A}_{N}\right)\oplus\mathbb{C}[[\epsilon,\hbar,\mu]]$. This quotient space is called the space of \emph{local functionals}, we denote it $\overline{\mathcal{A}}_N.$

\smallskip

\subsubsection{Hamiltonian of the DR hierarchies\label{subsec:Hamiltonian-densities}}

The Hamiltonian densities of the DR hierarchy are represented by differential polynomials, whose coefficients are intersection numbers on the moduli space of curves.
 
Fix $g,n\geq0$ such that $2g-2+n>0$. We denote by $\overline{\mathcal{M}}_{g,n}$ the the moduli space of stable curves of genus $g$ with $n$ marked points. Let $\pi:\overline{\mathcal{C}}_{g,n}\rightarrow\overline{\mathcal{M}}_{g,n}$ be the universal curve. Denote by $\omega_{rel}$ the relative cotangent line bundle over $\overline{\mathcal{C}}_{g,n}$. We introduce two cohomology classes and a homology cycle on $\overline{\mathcal{M}}_{g,n}$:
\begin{itemize}
\item the class $\psi_{i}$ is the first Chern class of the line bundle
$\sigma_{1}^{*}\left(\omega_{\rel}\right)$, where $\sigma_{i}:\overline{\mathcal{M}}_{g,n}\rightarrow\overline{\mathcal{C}}_{g,n}$
is the $i$-th section of the universal curve; %
\item the class $\lambda_{j}$ is the $j$-th Chern class of the Hodge bundle
$\pi_{*}\omega_{\rel}$, where $0\leq j\leq g$; %
\item fix a list of integers $\left(a_{1},\dots,a_{n}\right)$ such that
$\sum_{i=1}^{n}a_{i}=0$. We denote by ${\rm DR}_{g}\left(a_{1},\dots,a_{n}\right)\in H_{2\left(2g-3+n\right)}\left(\overline{\mathcal{M}}_{g,n}, \QQ \right)$
the double ramification cycle and refer, for example, to \cite{BSSZ}
for a definition.
\end{itemize}
It is proved in \cite{PZ,Pim} that the double ramification cycle is a degree $2g$ polynomial in the $a_{i}$\textquoteright s. 

\begin{defn}
Fix $d\geq-1$ and $1\leq\alpha\leq N$. The \emph{Hamiltonian density
$H_{d,\alpha}$} \emph{of the quantum DR hierarchy associated to the
CohFT }$\left(c_{g,n}^{[\mu]}\right)_{2g-2+n>0}$ is 
\begin{align}
\label{eq:def-hamilto-density}
H_{d,\alpha} & =\sum_{\substack{g,n\geq0\\
2g+n>0
}
}\frac{\left(\ii \hbar\right)^{g}}{n!}\sum_{a_{1},\dots,a_{n}\in\mathbb{Z}}\\
 & \times\left(\int_{{\rm DR}_{g}\left(0,a_{1},\dots,a_{n},-\sum a_{i}\right)}\psi_{1}^{d+1}\Lambda\left(\frac{-\epsilon^{2}}{i\hbar}\right)c_{g,n+2}^{[\mu]}\left(v_{\alpha},v_{\alpha_{1}},\dots,v_{\alpha_{n}},v_{1}\right)\right)p_{a_{1}}^{\alpha_{1}}\cdots p_{a_{n}}^{\alpha_{n}}e^{\ii x\sum a_{i}}\nonumber,
\end{align}
where $\Lambda(x)=1+x\lambda_1+\cdots+x^{g}\lambda_g$.
\end{defn}

It follows from the polynomiality of the DR cycle that the hamiltonian density $H_{d,\alpha}$ is a differential polynomial written with the $p$-variables. The Hamiltonian associated to this density is $\overline{H}_{\alpha,d}=\int H_{\alpha,d} dx$. 

\smallskip

\subsubsection{The star product\label{subsec:The-star-product}}
The quantum DR hierarchies are obtained as a deformation quantization of their classical counterparts using the following star product.

\begin{defn}
Let $\overline{f},\overline{g}\in \overline{\mathcal{B}}_{N}$. The star product of $\overline{f}$ and $\overline{g}$ is
\begin{align}
	\label{eq:def-star-product}
	\overline{f}\star \overline{g}=\overline{f}\exp\left(\sum_{k>0}\ii \hbar k\eta^{\alpha\beta}\overleftarrow{\frac{\partial}{\partial p_{k}^{\alpha}}}\overrightarrow{\frac{\partial}{\partial p_{-k}^{\beta}}}\right)\overline{g}\in  \overline{\mathcal{B}}_{N},
\end{align}
where the notations $\overleftarrow{\frac{\partial}{\partial p_{k}^\alpha}}$ and $\overrightarrow{\frac{\partial}{\partial p_{-k}^\beta}}$ mean that the derivative acts on the left or on the right, i.e. $\overline{f}\star \overline{g}=\overline{f}\overline{g}+ \sum_{k>0}\ii \hbar k \eta^{\alpha\beta} \frac{\partial \overline{f}}{\partial p_{k}^{\alpha}}\frac{\partial \overline{g}}{\partial p_{-k}^{\beta}}+\sum_{k_{1},k_{2}>0}\frac{\left(\ii \hbar\right)^{2}}{2}k_{1}k_{2} \eta^{\alpha_1\beta_1}\eta^{\alpha_2\beta_2}\frac{\partial^{2}\overline{f}}{\partial p_{k_{1}}^{\alpha_1} \partial p_{k_{2}}^{\alpha_2} }\frac{\partial^{2}\overline{g}}{\partial p_{-k_{1}}^{\beta_1}\partial p_{-k_{2}}^{\beta_2}}+\cdots$.
\end{defn}

The commutator of the star product is
\begin{align}
	[\overline{f},\overline{g}]:=\overline{f}\star \overline{g}-\overline{g}\star \overline{f}. 
\end{align}
One can define by the same expression the star product, and therefore the commutator, between a differential polynomial $f \in \mathcal{A}_N$ and a local functional $\overline{g} \in \overline{\mathcal{A}}_N$. In \cite{BR16-quantum}, they obtain an explicit expression for this commutator in terms of the $u$-variable. As a consequence of this expression, we get the following property.

\begin{prop}[\cite{BR16-quantum}]
Fix a differential polynomial $f \in\mathcal{A}_{N}$ and a local functional $\overline{g}\in\overline{\mathcal{A}}_{N}$. Then, the commutator of the star product $\left[f,\overline{g}\right]$
is a differential polynomial, that is an element of $\mathcal{A}_{N}$.
\end{prop}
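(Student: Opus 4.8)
The plan is to follow \cite{BR16-quantum}: express the commutator $[f,\overline g]$, which a priori is only an element of $\mathcal{B}_N$, explicitly in the $u$-variables, so that membership in $\mathcal{A}_N$ becomes visible term by term. The input is a pair of chain-rule identities for the derivatives occurring in the star product. Differentiating the defining substitution $u_s^\alpha=\sum_{a}(\ii a)^s p_a^\alpha e^{\ii ax}$ gives, for any $h\in\mathcal{A}_N$ and any $k\in\ZZ$,
\[
\frac{\partial h}{\partial p_k^\alpha}=e^{\ii kx}\sum_{s\ge 0}(\ii k)^s\frac{\partial h}{\partial u_s^\alpha},
\]
a sum that is finite order by order in $\hbar$; iterating handles the higher derivatives $\partial^\ell h/\partial p_{k_1}^{\alpha_1}\cdots\partial p_{k_\ell}^{\alpha_\ell}$. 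For a local functional $\overline g=\int g\,dx$ and any $j\neq 0$,
\[
\frac{\partial\overline g}{\partial p_j^\alpha}=\Coeff_{(e^{\ii x})^{-j}}\Bigl(\tfrac{\delta\overline g}{\delta u^\alpha}\Bigr),\qquad \tfrac{\delta\overline g}{\delta u^\alpha}:=\sum_{s\ge 0}(-\partial_x)^s\frac{\partial g}{\partial u_s^\alpha},
\]
which follows from $\partial_{p_j^\alpha}\!\int g\,dx=\int\partial_{p_j^\alpha}g\,dx$, the previous identity applied to $g$, and the fact that $\Coeff_{(e^{\ii x})^m}$ intertwines $\partial_x$ with multiplication by $\ii m$ — precisely the integration by parts that turns each $\ii j$ into a $-\partial_x$. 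The correction coming from the subtraction $g\mapsto g-g|_{p_*=0}$ in the definition of $\int\,dx$ contributes nothing, since the relevant coefficient-extraction condition would require a sum of strictly negative frequencies to be positive.

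Plugging these into $f\star\overline g-\overline g\star f$ and collecting terms, the order-$\hbar$ part comes out as
\[
[f,\overline g]_{1}=\hbar\,\eta^{\alpha\beta}\sum_{s\ge 0}\frac{\partial f}{\partial u_s^\alpha}\,\partial_x^{s+1}\Bigl(\tfrac{\delta\overline g}{\delta u^\beta}\Bigr),
\]
where the symmetry of $\eta$ and the change $k\mapsto -k$ merge the two single contractions into one sum over $k\in\ZZ$, after which $\sum_{k}(\ii k)^{s+1}e^{\ii kx}(\,\cdot\,)$ is recognised as $\partial_x^{s+1}$ applied to the relevant differential polynomial. This is manifestly in $\mathcal{A}_N$: the operators $\partial/\partial u_s^\alpha$ and $\partial_x$ preserve $\mathcal{A}_N$; the variational derivative $\delta/\delta u^\beta$ is a finite combination of these and kills $\mathrm{Im}\,\partial_x$ and $\mathbb{C}[[\epsilon,\hbar,\mu]]$, so it descends to a map $\overline{\mathcal{A}}_N\to\mathcal{A}_N$; and the $s$-sum is finite. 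The $\hbar^{\ell}$-term for $\ell\ge 2$ is assembled by the same recipe: expand both $\partial^\ell f/\partial p_{k_1}^{\alpha_1}\cdots\partial p_{k_\ell}^{\alpha_\ell}$ and $\partial^\ell\overline g/\partial p_{-k_1}^{\beta_1}\cdots\partial p_{-k_\ell}^{\beta_\ell}$ with the chain rules, relabel using $\eta$ and send $k_i\mapsto -k_i$ to combine the two orderings, and reorganise the resulting frequency sums into iterated $\partial_x$-derivatives of differential polynomials; each term then lies in $\mathcal{A}_N$ and the series converges $\hbar$-adically.

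The step carrying real content is exactly this last one for $\ell\ge 2$: unlike at order $\hbar$, the factor $\partial^\ell\overline g/\partial p_{-k_1}^{\beta_1}\cdots\partial p_{-k_\ell}^{\beta_\ell}$ depends on the $k_i$ individually rather than only through $k_1+\cdots+k_\ell$, so it cannot be read directly as a single Fourier mode of a $\partial_x$-derivative; one has to check that after combining $f\star\overline g$ with $\overline g\star f$ the purely positive-frequency (hence a priori non-local) contributions cancel, leaving only expressions built from $\partial_x$, $\eta$, $\partial/\partial u_s^\alpha$ and $\delta/\delta u^\beta$. This cancellation is the combinatorial core of \cite{BR16-quantum}; everything else is bookkeeping. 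A conceptual reason why no non-local term can survive: the star product is equivariant for the rescaling action $x\mapsto x+c$, $p_a^\alpha\mapsto e^{-\ii ac}p_a^\alpha$ — the factor $e^{\ii kc}$ produced by $\partial/\partial p_k^\alpha$ acting on the left cancels against the $e^{-\ii kc}$ produced by $\partial/\partial p_{-k}^\beta$ acting on the right — and both $\mathcal{A}_N$ and $\overline{\mathcal{A}}_N$ consist of invariants, so $[f,\overline g]$ is invariant; that is, every monomial $p_{a_1}^{\gamma_1}\cdots p_{a_m}^{\gamma_m}e^{\ii bx}$ occurring in it satisfies $b=a_1+\cdots+a_m$, which together with the polynomial dependence of its coefficients on the frequencies — inherited from $f$ and $\overline g$ through differentiation and multiplication — pins it down as an element of $\mathcal{A}_N$.
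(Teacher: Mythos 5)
First, note that the paper does not reprove this statement: it is quoted from \cite{BR16-quantum}, and the justification offered is precisely that \emph{op.~cit.} computes $[f,\overline g]$ explicitly in the $u$-variables, from which membership in $\mathcal{A}_N$ is read off. Your overall strategy (derive the $u$-variable expression via the chain rules for $\partial/\partial p_k^\alpha$ and the identity $\partial\overline g/\partial p_j^\alpha=\Coeff_{(e^{\ii x})^{-j}}(\delta\overline g/\delta u^\alpha)$) is therefore the right one, and your order-$\hbar$ computation, giving $\eta^{\alpha\beta}\sum_s \frac{\partial f}{\partial u_s^\alpha}\,\partial_x^{s+1}\frac{\delta\overline g}{\delta u^\beta}$ up to normalisation, is correct in substance; deferring the order-$\hbar^\ell$, $\ell\ge2$, cancellation to \cite{BR16-quantum} would also be consistent with what the paper itself does.

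The problem is your closing claim that the equivariance argument ``pins it down'': it does not, and this is a genuine gap rather than bookkeeping. Invariance under $x\mapsto x+c$, $p_a^\alpha\mapsto e^{-\ii ac}p_a^\alpha$ only yields the frequency-matching condition $b=a_1+\cdots+a_m$ for each monomial; membership in $\phi(\mathcal{A}_N)$ additionally requires the coefficient of $p_{a_1}^{\gamma_1}\cdots p_{a_m}^{\gamma_m}e^{\ii(\sum a_i)x}$ to be a \emph{polynomial} function of $(a_1,\dots,a_m)$, and that polynomiality is not ``inherited through differentiation and multiplication''. The star product sums over contraction indices $k_1,\dots,k_\ell>0$, and for a fixed output monomial the local functional only constrains $k_1+\cdots+k_\ell=\sum_{j\in J}a_j$ for the subset $J$ of indices fed into $\overline g$; the coefficient is thus a sum over lattice points of a simplex whose size depends on the $a_j$'s, i.e.\ a Faulhaber-type sum which is polynomial in $\sum_{j\in J}a_j$ only by arguments in the spirit of \Cref{lem:multi:Faulhaber} (with its parity refinement) and only on the region $\sum_{j\in J}a_j>0$, while the $\overline g\star f$ term covers the complementary region. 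The content of the proposition is exactly that these piecewise-polynomial expressions glue into a single polynomial after taking the commutator. Your invariance argument cannot see this: $f\star\overline g$ alone is also invariant and its coefficients are likewise ``built from those of $f$ and $\overline g$ by differentiation and multiplication'', yet already at order $\hbar$ its coefficient at a fixed monomial is $(\sum_{j\in J}a_j)\cdot(\text{polynomial})$ when $\sum_{j\in J}a_j>0$ and $0$ otherwise, which is not a polynomial — so $f\star\overline g\notin\phi(\mathcal{A}_N)$ in general, and an argument that would equally apply to it proves too much. So either carry out the explicit computation of the commutator (as in \cite{BR16-quantum}) or leave the statement as a citation; the conceptual shortcut as written fails.
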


\smallskip

\subsubsection{Integrability and tau symmetry of the quantum DR hierarchies}

The quantum DR hierarchies satisfy two fundamental properties.
\begin{prop}
[Integrability, \cite{BR16}]Let $d_{1},d_{2}\geq-1$ dans $1\leq\alpha_{1},\alpha_{2}\leq N$.
We have
\begin{align}
\left[\overline{H}_{d_{1},\alpha_{1}},\overline{H}_{d_{2},\alpha_{2}}\right]=0.
\end{align}
\end{prop}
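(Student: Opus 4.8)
The plan is to follow the original argument of Buryak and Rossi. The first step is to apply the explicit formula for the star-product commutator $[f,\overline g]$ of a differential polynomial with a local functional obtained in \cite{BR16-quantum}: taking $f=H_{d_1,\alpha_1}$ and $\overline g=\overline H_{d_2,\alpha_2}$, this expresses $[\overline H_{d_1,\alpha_1},\overline H_{d_2,\alpha_2}]$ as a local functional whose coefficients are finite sums of intersection numbers on moduli spaces $\oM_{g,n}$, each built from two $\psi^{d+1}\,\Lambda(-\epsilon^2/\ii\hbar)\,c^{[\mu]}$-type insertions --- one coming from each Hamiltonian --- glued together, and weighted by the combinatorial factors $\ii\hbar\,k\,\eta^{\alpha\beta}$ produced whenever a variable $p_k^\alpha$ of the first Hamiltonian is contracted against a variable $p_{-k}^\beta$ of the second.

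The core of the argument is then to rewrite these paired intersection numbers on a single moduli space and to exhibit the cancellation. For this one uses the standard geometry of the double ramification cycle together with the CohFT axioms: (i) the polynomiality of $\DR_g$ in the weights $a_i$ recalled above (see \cite{PZ,Pim}); (ii) the invariance of $\DR_g(a_1,\dots,a_n)$ under permutations of the marked points carrying their weights, and the pullback relation $\DR_g(a_1,\dots,a_n,0)=\pi^*\DR_g(a_1,\dots,a_n)$ along the map $\pi$ forgetting a weight-$0$ point, compatible with the unit axiom of $c_{g,n}^{[\mu]}$; (iii) the multiplicativity of $\DR_g$ under gluing morphisms $\oM_{g',n'+1}\times\oM_{g'',n''+1}\to\oM_{g,n}$, along which the pullback of $\DR_g$ is the external product of two DR cycles with opposite weights on the node, to be matched against the splitting axiom of the CohFT and the pushforward behaviour of $\psi$- and $\lambda$-classes. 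Combining these with the projection formula converts the commutator, term by term, into expressions of the shape $\int_{\DR}(\text{product of }\psi\text{'s})\,\Lambda\,c^{[\mu]}$ in which the dependence on the two pairs $(d_1,\alpha_1)$ and $(d_2,\alpha_2)$ has been symmetrised.

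In the final step one arranges these terms so that the commutator --- being manifestly antisymmetric under the exchange of the two pairs of indices --- appears as the antisymmetrisation of a single collection of intersection numbers that are individually symmetric under that exchange, up to total $\partial_x$-derivatives, which vanish in $\overline{\mathcal A}_N$; hence the whole expression cancels. Concretely this reduces to the DR relations on $\oM_{g,n}$ of the kind already used in the classical case (cf.\ the extra input of \cite{BSS}), plus the feature special to the quantum setting: the $\hbar$-corrections carried by the star product must be absorbed order by order by the genus factors $(\ii\hbar)^g$ in \eqref{eq:def-hamilto-density} and by the insertion $\Lambda(-\epsilon^2/\ii\hbar)$, so that the three simultaneous expansions in $\epsilon$, $\hbar$ and $\mu$ all balance; note in particular that the top term $\lambda_g$ of $\Lambda$ reproduces the classical Hamiltonian with no surviving power of $\hbar$, while the subleading terms $\lambda_{g-1},\dots,\lambda_0$ are exactly the quantum deformation.

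The main obstacle is this last matching together with the control of the boundary contributions appearing in the second step: besides the ``naive'' terms the star-product commutator produces genuinely new contributions supported on boundary divisors where a separating node distributes the marked points of the two Hamiltonians, and one has to check that the splitting of the DR cycle on these divisors combined with the factors $\ii\hbar\,k\,\eta^{\alpha\beta}$ reproduces precisely the structure required for the antisymmetric cancellation, with no anomalous term left over and with the genus bookkeeping consistent with the $\hbar$-grading; everything else is formal manipulation inside $\mathcal A_N$ and $\overline{\mathcal A}_N$.
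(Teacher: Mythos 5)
The paper does not actually prove this proposition: it is recalled as a known result and attributed to Buryak--Rossi (\cite{BR16}, and in the quantum setting \cite{BR16-quantum}), so there is no internal proof to compare your text against. Judged on its own terms, what you have written is a plan to reproduce the original argument rather than a proof. The entire mathematical content of the statement is the cancellation, and that is precisely the step you assert rather than establish: ``one arranges these terms so that the commutator \dots appears as the antisymmetrisation of a single collection of intersection numbers that are individually symmetric under that exchange, up to total $\partial_x$-derivatives, \dots hence the whole expression cancels.'' Why each term admits such a symmetric representative is exactly what Buryak--Rossi prove, using specific relations for $\psi$-classes and double ramification cycles restricted to boundary strata (of the type established in \cite{BSSZ}); invoking ``the standard geometry of the DR cycle'' generically does not close the argument.

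Two concrete points would have to be repaired before the sketch could become a proof. First, the star product is an exponential of contractions, so $\left[\overline{H}_{d_{1},\alpha_{1}},\overline{H}_{d_{2},\alpha_{2}}\right]$ contains, at order $\hbar^{k}$, terms in which $k$ pairs $p_{a}^{\alpha},p_{-a}^{\beta}$ are contracted simultaneously; geometrically these correspond to gluing the two DR-decorated moduli spaces along $k$ nodes, which raises the genus, and not to a single separating boundary divisor. Your discussion of ``boundary divisors where a separating node distributes the marked points'' covers only the first-order term, and you only say that the higher corrections ``must be absorbed order by order'' without indicating how the bookkeeping of $(\ii\hbar)^{g}$, $\Lambda\left(-\epsilon^{2}/\ii\hbar\right)$ and the multi-node gluings is to be organised. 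Second, the reduction ``up to total $\partial_x$-derivatives'' needs care at the level of local functionals: the identity to be proven is an identity in $\overline{\mathcal{B}}_{N}$ after the integration map has already been applied, so one must exhibit the symmetric representative at the level of the coefficients of the $p$-variables (equivalently of the underlying intersection numbers), which again is the nontrivial DR-cycle input. As it stands, the proposal correctly lists plausible ingredients but leaves the decisive cancellation mechanism unproven.
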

\begin{prop}
[Tau symmetry, \cite{BDGR20}]Let $d_{1},d_{2}\geq0$ dans $1\leq\alpha_{1},\alpha_{2}\leq N$.
We have 
\begin{align}
\left[H_{d_{1}-1,\alpha_{1}},\overline{H}_{d_{2},\alpha_2}\right]=\left[H_{d_{2}-1,\alpha_2},\overline{H}_{d_{1},\alpha_1}\right].
\end{align}
\end{prop}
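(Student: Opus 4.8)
\emph{Strategy.} The plan is to produce a single differential polynomial $\Omega_{\alpha_1,d_1;\alpha_2,d_2}\in\mathcal{A}_N$, symmetric under exchanging the two pairs of indices, whose $x$-derivative computes each side of the claimed identity. First, by the Integrability proposition above, $\overline{[H_{d_1-1,\alpha_1},\overline{H}_{d_2,\alpha_2}]}=[\overline{H}_{d_1-1,\alpha_1},\overline{H}_{d_2,\alpha_2}]=0$; since $[H_{d_1-1,\alpha_1},\overline{H}_{d_2,\alpha_2}]$ is a differential polynomial (by the Proposition of \cite{BR16-quantum} above) and the kernel of $\int\cdot\,dx$ on $\mathcal{A}_N$ is $\mathrm{Im}(\partial_x)\oplus\mathbb{C}[[\epsilon,\hbar,\mu]]$, there is a differential polynomial $\Omega_{\alpha_1,d_1;\alpha_2,d_2}$, determined up to an additive element of $\mathbb{C}[[\epsilon,\hbar,\mu]]$ that we fix once and for all (compatibly with the normalisation of \Cref{def:two-point}), such that $[H_{d_1-1,\alpha_1},\overline{H}_{d_2,\alpha_2}]=\partial_x\Omega_{\alpha_1,d_1;\alpha_2,d_2}$. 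The statement is thus equivalent to $\Omega_{\alpha_1,d_1;\alpha_2,d_2}=\Omega_{\alpha_2,d_2;\alpha_1,d_1}$.

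\emph{Computing $\Omega$.} To identify $\Omega_{\alpha_1,d_1;\alpha_2,d_2}$ geometrically I would substitute the definition of the Hamiltonian densities into the explicit $u$-variable formula for the star-commutator of a differential polynomial with a local functional obtained in \cite{BR16-quantum}. In the $p$-variables the $r$-th order term of the star product joins $r$ insertions of the $\psi_1^{d_1}$-decorated correlator of $H_{d_1-1,\alpha_1}$ to $r$ insertions of the $\psi_1^{d_2+1}$-decorated correlator of $H_{d_2,\alpha_2}$, producing a factor $\prod_{j=1}^{r}\ii\hbar\, k_j\,\eta^{\alpha_j\beta_j}$ from the matched momenta $k_j>0$; after antisymmetrisation one obtains a sum, over all such identifications, of intersection numbers over products $\overline{\mathcal{M}}_{g',n'}\times\overline{\mathcal{M}}_{g'',n''}$ of the corresponding products of double ramification cycles and Hodge classes. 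The remaining task is to rewrite this sum as $\partial_x$ of the analogous expression in which the two joined insertions carry the \emph{symmetric} decoration $\psi^{d_1}$ and $\psi^{d_2}$. This is achieved using two standard properties of the double ramification cycle: the comparison $\psi=\pi^{*}\psi+[\text{boundary}]$ under forgetting a marked point together with the string- and dilaton-type relations for $\mathrm{DR}_g$ of \cite{BSSZ}, which convert the surplus power $\psi^{d_2+1}$ into $\psi^{d_2}$ at the cost of extracting exactly one outer $\partial_x$; and the polynomiality of $\mathrm{DR}_g$ in the multiplicities, which guarantees the result is again a differential polynomial. Carrying this out yields an explicit formula for $\Omega_{\alpha_1,d_1;\alpha_2,d_2}$ as a weighted sum of $\mathrm{DR}$-cycle intersection numbers in which the data $(d_1,\alpha_1)$ and $(d_2,\alpha_2)$ decorate the two ends of a gluing.

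\emph{Symmetry and conclusion.} In the formula for $\Omega$ just obtained, the $\mathbb{Z}/2$-action exchanging the two joined marked points on every boundary stratum --- it simultaneously swaps the powers $\psi^{d_1}\leftrightarrow\psi^{d_2}$ and the insertions $v_{\alpha_1}\leftrightarrow v_{\alpha_2}$, and absorbs the swap of the Künneth factors into the symmetry of $\eta$ --- identifies $\Omega_{\alpha_1,d_1;\alpha_2,d_2}$ with $\Omega_{\alpha_2,d_2;\alpha_1,d_1}$ up to a constant, which vanishes by our normalisation (if after the manipulation above the two $\psi$-powers ended up attached asymmetrically, this last identification instead uses the $\mathrm{DR}$-cycle relation that $(\psi_1-\psi_2)$ times the relevant class integrates to $0$ over $\mathrm{DR}_g(0,0,\dots)$). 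Hence $[H_{d_1-1,\alpha_1},\overline{H}_{d_2,\alpha_2}]=\partial_x\Omega_{\alpha_1,d_1;\alpha_2,d_2}=\partial_x\Omega_{\alpha_2,d_2;\alpha_1,d_1}=[H_{d_2-1,\alpha_2},\overline{H}_{d_1,\alpha_1}]$, which is the assertion. A purely algebraic shortcut --- using the recursion $\partial_x H_{d,\alpha}=[H_{d-1,\alpha},\overline{H}_{0,1}]$ of \cite{BR16-quantum}, the Jacobi identity for the star-commutator and the Integrability proposition to get $\partial_x[H_{d_1-1,\alpha_1},\overline{H}_{d_2,\alpha_2}]=[[H_{d_1-2,\alpha_1},\overline{H}_{d_2,\alpha_2}],\overline{H}_{0,1}]$ and inducting on $d_1+d_2$ --- only reduces the claim to low-index cases that still require this same geometric input, so it does not bypass the core of the argument.

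\emph{Main obstacle.} The crux is the middle step: controlling the full graph expansion coming from the star-commutator to \emph{all} orders in $\hbar$, not merely the leading $\eta^{\alpha\beta}\partial_x$ Poisson term, and performing the $\psi$-comparison on the double ramification cycle uniformly over every boundary stratum so that precisely one $\psi$ is traded for the outer $\partial_x$ and the resulting polynomial is visibly symmetric. One must also check that the regularisation implicit in ``commutator with a local functional'' is compatible with these rewritings and that the $\mathbb{C}[[\epsilon,\hbar,\mu]]$-part behaves as claimed. Finally, the argument must rely solely on the formal structure of $\mathrm{DR}$ cycles (polynomiality in the multiplicities, behaviour under forgetting points, the relations of \cite{BSSZ}) and on the unit axiom of the CohFT, since no semisimplicity or localisation is available for an arbitrary CohFT with unit.
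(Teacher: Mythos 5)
First, a point of reference: the paper does not prove this proposition at all — it is quoted from \cite{BDGR20} — so your attempt has to be measured against the proof in op.\ cit.\ rather than against anything in this text. Your overall reduction (both commutators should be $\partial_x$ of one and the same two-point function, which should admit a manifestly symmetric representation as a sum of glued DR-cycle integrals carrying $\psi^{d_1}$ and $\psi^{d_2}$ at the two distinguished points) has the right shape and is consistent with the structure visible in \Cref{lem:derivative-two-point} of this paper. But as a proof it has a genuine gap, and it is exactly the step you yourself flag as the ``main obstacle'': asserting that ``$\psi=\pi^{*}\psi+[\text{boundary}]$ together with string- and dilaton-type relations for $\mathrm{DR}_g$ of \cite{BSSZ} converts the surplus $\psi^{d_2+1}$ into $\psi^{d_2}$ at the cost of extracting exactly one outer $\partial_x$'' is not an argument — it is a restatement of what must be proved. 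The actual input is the BSSZ expression of the $\psi$-classes pulled back from the Losev--Manin target on spaces of rubber maps as explicit weighted sums of boundary divisors, and the work consists in running this identity uniformly through the full star-product expansion, i.e.\ over all boundary graphs and all orders in $\hbar$; in the quantum setting one cannot restrict to compact type, since $\lambda_g$ is replaced by the full Hodge class $\Lambda(-\epsilon^{2}/\ii\hbar)$ and every stratum contributes. Making the resulting expression visibly symmetric under $(d_1,\alpha_1)\leftrightarrow(d_2,\alpha_2)$ is precisely the content of the theorem, and none of it is carried out in your sketch.

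Two further points. Your ``purely algebraic shortcut'' rests on a misread recursion: by part (1) of \Cref{lem: small lemmas string}, $[f,\overline{H}_{0,1}]=\hbar\,\partial_x f$ for every differential polynomial $f$, so commuting with $\overline{H}_{0,1}$ never lowers the index $d$; your claimed identity $\partial_x[H_{d_1-1,\alpha_1},\overline{H}_{d_2,\alpha_2}]=[[H_{d_1-2,\alpha_1},\overline{H}_{d_2,\alpha_2}],\overline{H}_{0,1}]$ is false (Jacobi plus integrability only returns the tautology $\partial_x[H_{d_1-1,\alpha_1},\overline{H}_{d_2,\alpha_2}]=[\partial_x H_{d_1-1,\alpha_1},\overline{H}_{d_2,\alpha_2}]$), so the proposed induction on $d_1+d_2$ does not even start — harmless only because you do not rely on it. Finally, your first step needs the commutator to lie exactly in $\mathrm{Im}\,\partial_x$, not merely in $\mathrm{Im}\,\partial_x\oplus\mathbb{C}[[\epsilon,\hbar,\mu]]$: integrability alone gives the statement modulo constants, and the absence of a constant term (cf.\ the Remark following \Cref{def:two-point}, which attributes it to \cite{BDGR20}) is something your argument should verify rather than absorb into the normalisation of $\Omega$.
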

Since the quantum DR hierarchies are integrable and tau-symmetric,
we can construct their tau functions.

\smallskip

\subsubsection{Topological tau function of the quantum DR hierarchies\label{subsec:Topological-tau-function}}
\begin{defn}
[Two point function]\label{def:two-point} Let $d_{1},d_{2}\geq0$ and $1\leq\alpha_{1},\alpha_{2}\leq N$. The two point function $\Omega_{\alpha_{1},d_{1};\alpha_{2},d_{2}}$ is the element of $\mathcal{A}_{N}$ defined by
		\begin{align}
		\label{eq:def-two-point}
		\partial_{x}\Omega_{d_{1},\alpha_{1};d_{2},\alpha_{2}}:=\frac{1}{\hbar}\left[H_{d_{1}-1,\alpha_{1}},\overline{H}_{d_{2},\alpha_{2}}\right],
		\end{align}
	and we fix the constant using the formula
		\begin{equation}
		\left.\frac{\partial\Omega_{d_{1},\alpha_{1};\alpha_{2},d_{2}}}{\partial u_{0}^{1}}\right|_{u_{*}^{*}=0} = \left.\Omega_{d_{1}-1,\alpha_{1};d_{2},\alpha_{2}}\right|_{u_{*}^{*}=0} + \left.\Omega_{d_{1},\alpha_{1};d_{2}-1,\alpha_{2}}\right|_{u_{*}^{*}=0},\quad (d_1,d_2) \neq (0,0) \label{eq:constant-two point function}
		\end{equation}
with the convention that $\Omega$ vanishes if one of its index is negative.  
\end{defn}


\begin{lem}
	\label{lem:compatibility}
	The system of Equations~(\ref{eq:constant-two point function}) admits a unique solution. 
\end{lem}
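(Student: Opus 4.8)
The plan is to read~\eqref{eq:constant-two point function} as a recursion for the constant terms of the two-point functions $\Omega_{d_1,\alpha_1;d_2,\alpha_2}$, to check that it forces a unique answer, and then to check that it is consistent. Since $\partial_x$ acts on $\mathcal{A}_N$ with kernel exactly $\CC[[\epsilon,\hbar,\mu]]$, Equation~\eqref{eq:def-two-point} determines each $\Omega_{d_1,\alpha_1;d_2,\alpha_2}$ up to its constant term $c_{d_1,\alpha_1;d_2,\alpha_2}:=\left.\Omega_{d_1,\alpha_1;d_2,\alpha_2}\right|_{u_*^*=0}\in\CC[[\epsilon,\hbar,\mu]]$; in particular the left-hand side $L_{d_1,\alpha_1;d_2,\alpha_2}:=\left.\frac{\partial\Omega_{d_1,\alpha_1;d_2,\alpha_2}}{\partial u_0^1}\right|_{u_*^*=0}$ of~\eqref{eq:constant-two point function} is already completely determined, so~\eqref{eq:constant-two point function} becomes a linear system for the family $\big(c_{d_1,\alpha_1;d_2,\alpha_2}\big)_{d_1,d_2\ge 0}$.

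First, uniqueness. With the convention that $\Omega$ vanishes whenever one of its indices is negative, \eqref{eq:constant-two point function} at $(d_1,d_2)=(k+1,0)$ reads $c_{k,\alpha_1;0,\alpha_2}=L_{k+1,\alpha_1;0,\alpha_2}$, which fixes all the constants $c_{k,\alpha_1;0,\alpha_2}$, $k\ge 0$; and for $d_1\ge 1$, $d_2\ge 1$ it reads $c_{d_1-1,\alpha_1;d_2,\alpha_2}=L_{d_1,\alpha_1;d_2,\alpha_2}-c_{d_1,\alpha_1;d_2-1,\alpha_2}$, so an induction on $d_2$ fixes every remaining $c_{a,\alpha_1;b,\alpha_2}$. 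For existence, define the constants by exactly these formulas; then~\eqref{eq:constant-two point function} holds for every $(d_1,d_2)$ with $d_1\ge 1$ by construction, and the only equations left to verify are those at $(d_1,d_2)=(0,E)$, $E\ge 1$, namely $c_{0,\alpha_1;E-1,\alpha_2}=L_{0,\alpha_1;E,\alpha_2}$. Unwinding the recursion yields $c_{0,\alpha_1;E-1,\alpha_2}=\sum_{i=1}^{E}(-1)^{i-1}L_{i,\alpha_1;E-i,\alpha_2}$, so existence is equivalent to the compatibility identity
\[
\sum_{i=0}^{E}(-1)^{i}\,L_{i,\alpha_1;E-i,\alpha_2}=0,\qquad E\ge 1,\quad 1\le\alpha_1,\alpha_2\le N .
\]

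Establishing this identity is the step I expect to be the main obstacle. The key input is the string-type recursion $\frac{\partial H_{d,\alpha}}{\partial u_0^1}=H_{d-1,\alpha}$ for $d\ge 0$ (together with $\frac{\partial H_{-1,\alpha}}{\partial u_0^1}=\eta_{\alpha 1}$, a constant). It follows from the geometric formula~\eqref{eq:def-hamilto-density}: applying $\frac{\partial}{\partial u_0^1}$ amounts to adjoining a marked point decorated by $\bm{1}$ with double ramification weight $0$; since ${\rm DR}_g$ with a vanishing weight is pulled back along the forgetful map $\pi$ that forgets that point (see \cite{BSSZ}), the projection formula applies, the Hodge classes (hence $\Lambda$) pull back, the unit axiom gives $c_{g,n+2}^{[\mu]}(\dots,\bm{1},\bm{1})=\pi^*c_{g,n+1}^{[\mu]}(\dots,\bm{1})$, and the comparison $\psi_1=\pi^*\psi_1+D$ (with $D$ the boundary divisor where the adjoined point meets the first marked point), combined with $\pi_*(D^k)=(-\psi_1)^{k-1}$ for $k\ge 1$, $\pi_*(1)=0$ and $\sum_{k\ge 1}\binom{d+1}{k}(-1)^{k-1}=1$, gives $\pi_*\big(\psi_1^{d+1}\Lambda\,c_{g,n+2}^{[\mu]}\big)=\psi_1^{d}\Lambda\,c_{g,n+1}^{[\mu]}$; alternatively this relation can be quoted from \cite{BR16-quantum, BDGR20}.

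Since $\frac{\partial}{\partial u_0^1}$ commutes with $\partial_x$ (clear from $\partial_x=\sum_{i\ge 0}u_{i+1}^\gamma\frac{\partial}{\partial u_i^\gamma}$) and descends to local functionals, one computes from~\eqref{eq:def-two-point}, the $u$-variable expression for the star-product commutator in \cite{BR16-quantum}, and the above recursion, that $\partial_x\frac{\partial}{\partial u_0^1}\Omega_{d_1,\alpha_1;d_2,\alpha_2}=\partial_x\big(\Omega_{d_1-1,\alpha_1;d_2,\alpha_2}+\Omega_{d_1,\alpha_1;d_2-1,\alpha_2}\big)$ for $(d_1,d_2)\ne(0,0)$, i.e.\ these two differential polynomials differ by an element of $\CC[[\epsilon,\hbar,\mu]]$. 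Using also the tau-symmetry proposition, which gives $L_{d_1,\alpha_1;d_2,\alpha_2}=L_{d_2,\alpha_2;d_1,\alpha_1}$, I would then prove the compatibility identity by induction on $E$: the base case $E=1$ reads $L_{0,\alpha_1;1,\alpha_2}=L_{1,\alpha_1;0,\alpha_2}$, and the inductive step would reduce the alternating sum to a string/dilaton-type cancellation among the intersection numbers over $\oM_{g,n}$ representing the $L_{i,\alpha_1;E-i,\alpha_2}$. The cancellation of all the $x$-independent integration constants produced along the way is the delicate point; should a fully formal argument prove elusive, the identity can also be obtained by comparison with the normalization of the two-point function established in \cite{BDGR20}.
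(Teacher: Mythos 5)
Your reduction of the lemma is the same as the paper's: uniqueness by the recursion on the constants, and existence equivalent to the alternating-sum identity $\sum_{i=0}^{E}(-1)^{i}L_{i,\alpha_1;E-i,\alpha_2}=0$ (the paper's compatibility equation, up to an overall sign), with the preliminary facts $\frac{\partial H_{d,\alpha}}{\partial p_0^1}=H_{d-1,\alpha}$ and $\partial_x\frac{\partial \Omega_{d_1,\alpha_1;d_2,\alpha_2}}{\partial p_0^1}=\partial_x\bigl(\Omega_{d_1-1,\alpha_1;d_2,\alpha_2}+\Omega_{d_1,\alpha_1;d_2-1,\alpha_2}\bigr)$ matching the paper's Lemma~\ref{lem: small lemmas string}. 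However, the proof of the compatibility identity itself — which is the entire mathematical content of \Cref{lem:compatibility} — is missing. Your inductive "string/dilaton-type cancellation" is not substantiated, and it cannot be made to work on purely formal grounds: the relation $\frac{\partial\Omega_{p,\alpha;q,\beta}}{\partial p_0^1}=\Omega_{p-1,\alpha;q,\beta}+\Omega_{p,\alpha;q-1,\beta}+C_{p,q}$ only holds up to unknown constants $C_{p,q}$ that depend on the very normalizations you are trying to fix, so telescoping the alternating sum gives no information. What the paper actually does is (i) compute $L_{p,\alpha;q,\beta}$ explicitly (\Cref{lem:derivative-two-point}): extracting $\Coeff_{p_a^1 e^{\ii a x}}$ with $a>0$ kills all but the $\tilde\star$-part of $[H_{p-1,\alpha},\overline H_{q,\beta}]$ and yields a sum over $n\geq 1$ gluings of two DR-cycle integrals weighted by $k_1\cdots k_n/n!$; (ii) simplify this with the push-forward property of DR cycles (\Cref{cor: push forward DR numbers}); and (iii) prove the vanishing $\Phi_g^r=0$ from the Losev--Manin relation $(-1)^{r+1}\psi_0^r+\sum_{p+q=r-1}(-1)^{p-1}\psi_0^p\Delta\psi_\infty^q+\psi_\infty^r=0$, pulled back by $t$ to $\oM_g^{\sim}(\PP^1,a,-a)$ and pushed forward by $s$ using $a\,s^*\psi_1=t^*\psi_0$, $a\,s^*\psi_2=t^*\psi_\infty$ from \cite{BSSZ}. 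The identity genuinely compares $\psi$-classes at $0$ and at $\infty$ of the target; none of this geometric mechanism appears in your sketch.

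Your fallback — "comparison with the normalization of the two-point function established in \cite{BDGR20}" — does not close the gap either: the paper explicitly adopts a constant convention different from \cite{BDGR20}, and no statement there yields the alternating-sum identity for the derivatives $L_{p,\alpha;q,\beta}$. So while the setup and the easy half (uniqueness) are correct and aligned with the paper, the existence half is asserted rather than proved.
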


This lemma is proven in \Cref{app: compatibility constant term two point functions}.

\begin{rem}
The integrability condition ensures that $\left[H_{d_{1}-1,\alpha_{1}},\overline{H}_{d_{2},\alpha_{2}}\right]$
is in the image of $\partial_{x}$ so that the two point functions
exist.
\end{rem}
\begin{rem}
The choice of the constant \Cref{eq:constant-two point function} differs from the convention of \cite{BDGR20}. We made this choice so that the topological tau function satisfies the string and dilaton equations.
\end{rem}
\begin{defn}
We call the \emph{quantum double ramification tau function of topological type} the exponential of the generating series
\begin{equation}\label{eq:def:tautoptype}
F\left(t_{*}^{*};\epsilon,\hbar,\mu\right)= \sum_{\substack{g,n,l,k\geq0\\
2g-2+n>0\\
0\leq l\leq g
}
}\sum_{\underset{1\leq\alpha_{1},\dots,\alpha_{n}\leq N}{d_{1},\dots,d_{n}\geq0}}\frac{(-\ii)^{\sum d_i-3g+3-n+k}}{n!}\left\langle \tau_{d_{1},\alpha_{1}}\cdots\tau_{d_{n},\alpha_{n}}\right\rangle _{l,g-l;k}\epsilon^{2l}\hbar^{g-l}\mu^kt_{d_{1}}^{\alpha_{1}}\cdots t_{d_{n}}^{\alpha_{n}},
\end{equation}
where the quantum correlators satisfying the stability condition $2g-2+n> 0 $ are defined by
\end{defn}
\begin{align}
\left\langle \tau_{d_{1},\alpha_{1}}\cdots\tau_{d_{n}, \alpha_{n} }\right\rangle_{l,g-l;k} & \coloneqq \ii^{\sum d_i-3g+3-n+k} \times \nonumber \\
  \times \Coeff_{\epsilon^{2l}\hbar^{g-l}\mu^{k}}  &\left(\frac{1}{\hbar^{n-2}}\left[\cdots\left[\Omega_{d_{1},\alpha_{1};d_{2},\alpha_{2}},\overline{H}_{d_{3},\alpha_{3}}\right],\cdots,\overline{H}_{d_{n},\alpha_{n}}\right]\right)\Big|_{u_{i}^{\alpha}=\delta_{i,1}\delta_{\alpha,1}}, & \text{if }n\geq2;
\\
\left \langle \tau_{d,\alpha}\right\rangle _{l,g-l;k} & \coloneqq \left\langle \tau_{0,1}\tau_{d+1,\alpha}\right\rangle _{l,g-l;k}, & \text{if }g\geq1;
\\
\left\langle \right\rangle _{l,g-l;k} & \coloneqq \frac{1}{2g-2}\left\langle \tau_{1,1}\right\rangle _{l,g-l;k}, & \text{if }g\geq2;
\end{align}
and by zero otherwise. Note that the first definition for $n=2$ defines the $2$-point correlators in terms of the $\Omega$, which in turn allows to define the $1$-point correlators, and hence the $0$-point correlators. The evaluation $u_{i}^{\alpha}=\delta_{i,1}\delta_{\alpha,1}$ means
substituing $u_{1}^{1}=1$ and $u_{j}^{\alpha}=0$ if $\alpha\neq1$
and $j\neq1$ in the differential polynomial ${\rm Coef}_{\epsilon^{2l}\hbar^{g-l}\mu^k}\left(\frac{1}{\hbar^{n-2}}\left[\cdots\left[\Omega_{d_{1},\alpha_{1};d_{2},\alpha_{2}},\overline{H}_{d_{3},\alpha_{3}}\right],\cdots,\overline{H}_{d_{n},\alpha_{n}}\right]\right)$.
\begin{rem}
The integrability condition and the tau symmetry implies that the
correlators are symmetric if we exchange $\tau_{d_{i},\alpha_{i}}$ and $\tau_{d_{j},\alpha_{j}}$
for $1\leq i,j\leq n$. 

\end{rem}
\begin{rem}
The terms $\left\langle \tau_{d,\alpha}\right\rangle _{l,g-l}$ and
$\left\langle \right\rangle _{l,g-l}$ were not defined in \cite{BDGR20}. They are defined so that $F$ satisfies the string and dilaton equations.
\end{rem}
\begin{rem}
	Our choice of the power of $\ii$ in the normalisation of the correlators differs from \cite{Blot} but aligns with \cite{Blot-Buryak} in the case of the trivial CohFT. We prefer this convention because it cancels any power of $\ii$ in the representations of the correlators given in \Cref{thm: quantum A class} and in \Cref{thm:bounded-degree}.
\end{rem}
\begin{rem}
The classical limit of the logarithm of the quantum tau function of topological type, that is the power series
\begin{align}
F\vert_{\hbar=0}
\end{align}
is the logarithm of the topological tau function of the classical
DR hierarchy. For example, if the CohFT is trivial, this is the Witten-Kontsevich series \cite{wit-1,kon}. More generally, for any CohFT, the series $F\vert_{\hbar=0}$ is related to the partition function of the CohFT by the generalized strong DR/DZ conjecture, see \cite{BGR19} and~\cite{BS22}. The conjecture is now proved for semi-simple CohFT in \cite{DRDZ} with methods developed in \cite{BLRS, BLS-Omega}, and these results are extended for all CohFT using an extra result in \cite{BSS}. 
\end{rem}

\medskip

\subsection{Background on \texorpdfstring{$\Omega$}{Omega}-classes}
\label{sec:Omega:classes}

Let $g,n$ be two nonnegative integers such that $2g-2+n>0$. Let $r$ and $s$ be integers such that $r$ is positive, and let $a_1, \ldots, a_n$ be integers satisfying the modular constraint
\begin{equation}
	a_1 + a_2 + \cdots + a_n \equiv (2g-2+n)s \pmod{r}.
\end{equation}
We denote by  $\overline{\mathcal{M}}_{g}^{r,s}(a_1, \ldots, a_n)$ the moduli stack of $r$-spin structures parametrizing $r$-th roots of the line bundle
\begin{equation}
	\omega_{\log}^{\otimes s}\biggl(-\sum_{i=1}^n a_i p_i \biggr),
\end{equation}
where $\omega_{\log} = \omega(\sum_i p_i)$ is the log-canonical bundle. The modular condition guarantees the existence of a $r$th root. Let $\pi \colon \overline{\mathcal{C}}_{g}^{r,s}(a_1, \ldots, a_n) \to \overline{\mathcal{M}}_{g}^{r,s}(a_1, \ldots, a_n)$ be the universal curve, and $\mathcal{L} \to \overline{\mathcal C}_{g}^{r,s}(a_1, \ldots, a_n)$ the universal $r$-th root. 
 In complete analogy with the case of moduli spaces of stable curves, one can define $\psi$-classes. There is moreover a natural forgetful morphism
\begin{equation}
	\epsilon \colon
	\overline{\mathcal{M}}^{r,s}_{g}(a_1, \ldots, a_n)
	\longrightarrow
	\overline{\mathcal{M}}_{g,n}
\end{equation}
which forgets the line bundle. We denote
\begin{equation}\label{eqn:Omega}
	\Ch_{g,n}^{[x]}(r,s;a_1,\dots,a_n)
	=
	\epsilon_{\ast}
	\left(\sum_{i\geq 0} x^i c_i\left( -R^*\pi_* \mathcal L \right) 
	\right)
	\in
	H^{\textup{even}}(\overline{\mathcal{M}}_{g,n}).
\end{equation}



\medskip


\begin{defn} Let $g,n \geq 0$ such that $2g - 2 + n > 0$. Let $a_1, \dots, a_n$ be non-negative integers and let $\suma = a_1 + \dots + a_n$. We define the class:
\begin{align}
\text{\hhh}_{g,n}(a_1, \dots, a_n) &\coloneqq \suma^{1-g} \cdot \Omega^{[\suma]}_{g,n}(\suma, 0; -a_1, \dots, -a_n).
\end{align}
\end{defn}

We recall from \cite{BLS-Omega, GLN, LPSZ} that the $\Omega$-classes satisfy

\begin{enumerate}
	\item $\Ch^{[x]}_{g,n}(r, s; a_1, \dots, a_i + r, \dots, a_n) = \Ch^{[x]}_{g,n}(r,s;a_1, \dots, a_n) \cdot\left( 1 + x\frac{a_i}{r}\psi_i\right)$. In particular we have that 
		\begin{equation}
			\text{\hhh}_{g,n}(a_1, \dots, a_n) = \frac{ \suma^{1-g} \cdot \Omega^{[\suma]}_{g,n}(\suma, 0; \suma-a_1, \dots, \suma-a_n)}{\prod_{i=1}^n(1 - a_i \psi_i)}.
		\end{equation} 
	\item Assume in addition that $0\leq a_1,\dots,a_n\leq r$. Then 
	\begin{equation}
		\Ch^{[x]}_{g,n}(r,s;a_1, \dots, a_n, s) = \pi^{\ast}\Ch^{[x]}_{g,n}(r,s;a_1, \dots, a_n)
	\end{equation}
	for any integer $s$, where $\pi\colon \oM_{g,n+1}\to \oM_{g,n}$ forgets the last marked point. 
\end{enumerate}

\medskip

\subsection{Acknowledgements}

This work is partly a result of the ERC-SyG project, Recursive and Exact New Quantum Theory (ReNewQuantum) which received funding from the European Research Council (ERC) under the European Union's Horizon 2020 research and innovation programme under grant agreement No 810573. 

\smallskip

X.~B.~and S.~S.~are supported by the Dutch Research Council (grant OCENW.M.21.233 ``Quantum tau functions and related topics''). D.~L.~is supported by the University of Trieste, by the INdAM group GNSAGA, and by the INFN within the project MMNLP (APINE).

\smallskip

The authors thank V.~Delecroix, P.~Rossi, A.~Sauvaget, J.~Schmitt, and D.~Zvonkine for useful discussions.

\bigskip

\section{Main statements}\label{sec:main}

\subsection{The quantum \texorpdfstring{$A$}{A}-class and quantum correlators}
Let $g$ be a nonnegative integers, and let $n,a_1, \dots, a_n$ be positive integers. Consider the moduli space 
\begin{align}
	\overline{\mathcal{M}}_{g}^{\sim}(\mathbb{P}^1,a_1,\dots,a_n,-\suma), \qquad \suma \coloneqq \sum_{i=1}^n a_i
\end{align}
of rubber stable maps to $(\mathbb{P}^1,0,\infty)$, where the profile over $0$ consists of the positive entries of the list $(a_1, \dots, a_n, -\suma)$, and the profile over $\infty$ consists of the opposite of the negative entries. Its projection onto the source curve is denoted by 
\begin{align}
	s: \overline{\mathcal{M}}_{g}^{\sim}(\mathbb{P}^1,a_1,\dots,a_n,-\suma) \to \oM_{g,n+1}.
\end{align}
Let $\lambda_i$ for $i=0,\dots,g$, the lifts of the $\lambda$-classes with respect to the projection $s$ and let 
\begin{align}
\Lambda(t) \coloneqq 1 + t\lambda_1 + t^2\lambda_2 + \dots + t^g\lambda_g
\end{align}
Consider the projection on the target curve
\begin{equation}
t\colon \oM_{g}^{\sim}(\mathbb{P}^1,a_1,\dots,a_n,-\suma)\to LM_{2g-1+n},
\end{equation}
where $LM_{m}$ denotes the Losev-Manin space with $m$ marked points. Let $\tilde \psi_0$ be the pull-back by $t$ of the $\psi$-class at the point $0$ in the cohomology of the Losev-Manin space. Let $\pi \colon \oM_{g,n+1} \to \oM_{g,n}$ be the map that forgets the last marked point. 
\begin{defn} \label{def:QuantumA}
We define the quantum $A$-class as
\begin{align} \label{eq:NewA1}
	A_{g,n}(a_1,\dots,a_n)\coloneqq \frac{1}{\suma} \pi_* s_* \left(\frac{\hbar^g\Lambda\Big(\frac{1}{\hbar}\Big)}{1-\tilde\psi_0} \left[\oM_{g}^{\sim}\left(\mathbb{P}^1,a_1,\dots,a_n,-\suma \right)\right]^{\mathrm{vir}}\right) \in R^*(\oM_{g,n}).
\end{align}
\end{defn}

\begin{rem}
	The class $A_{g,n}(a_1,\dots,a_n)\vert_{\hbar=0} $ is the $A$-class of the recently established DR-DZ equivalence, see \cite{BDGR20,BGR19,DRDZ,BSS}.
\end{rem}

The first result concerns polynomial properites of the quantum $A$-class. 

\begin{prop}
\label{thm:polynomiality-A}
Let $d\geq0$. Suppose that $a_1,\dots,a_n$ are positive integers, then the class
\begin{equation}
	s_{*}\left(\tilde{\psi}_{0}^{d}\left[\overline{\mathcal{M}}_{g}^{\sim}\left(\mathbb{P}^{1},a_{1},\dots,a_{n},-\suma\right)\right]\right)
\end{equation}
is represented by a polynomial in the variables $a_{1},\dots,a_{n}$  with coefficients in the ring $H^{*}\left(\overline{\mathcal{M}}_{g,n+1}\right)$. This polynomial is of degree $2g+d$, and it is odd or even depending on its degree. Moreover, its push-forward by $\pi:\overline{\mathcal{M}}_{g,n+1}\rightarrow\overline{\mathcal{M}}_{g,n}$, forgetting the last marked point, is divisible by $\suma$. 
\end{prop}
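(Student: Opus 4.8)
The plan is to reduce the statement to known polynomiality results for the double ramification cycle together with Losev--Manin $\psi$-classes, via the virtual localization / pushforward comparison on the space of rubber stable maps. First I would recall that the moduli space $\overline{\mathcal{M}}_{g}^{\sim}(\mathbb{P}^1,a_1,\dots,a_n,-\suma)$ carries a virtual class whose pushforward $s_*$ to $\overline{\mathcal{M}}_{g,n+1}$, after capping with powers of $\tilde\psi_0$, is governed by the formula of Janda--Pandharipande--Pixton--Zvonkine relating rubber maps to the double ramification cycle. Concretely, $s_*\big(\tilde\psi_0^{\,d}[\overline{\mathcal{M}}_g^{\sim}(\mathbb{P}^1,a_1,\dots,a_n,-\suma)]^{\mathrm{vir}}\big)$ can be expressed, up to an explicit combinatorial factor, as $\pi_{n+2,\,*}$ of $\psi_1^{\,d+1}\cdot \mathrm{DR}_g(0,a_1,\dots,a_n,-\suma)$, where the extra marked point carries the weight $0$ over the branch point and $\pi_{n+2}$ forgets it; this is exactly the geometry underlying the definition of the Hamiltonian densities \eqref{eq:def-hamilto-density}. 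The key input I would invoke is the theorem of Pixton--Zagier and Pandharipande \cite{PZ,Pim}, already quoted in the excerpt, that $\mathrm{DR}_g(a_1,\dots,a_{n+1})$ is a polynomial of degree $2g$ in the $a_i$ with coefficients in $H^*(\overline{\mathcal{M}}_{g,n+1})$, together with the fact that $\psi$-classes are independent of the weights.

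Next I would track the degrees. The class $\tilde\psi_0^{\,d}$ contributes a fixed cohomological degree, the DR cycle contributes a polynomial of degree exactly $2g$ in the $a_i$, and the pushforward $s_*$ (or equivalently $\pi_{n+2,*}$) is a fixed-degree operation that does not change polynomial degree in the $a_i$; adding these gives total degree $2g+d$ in the $a_i$. For the parity statement, I would note that the DR polynomial is even (it is a polynomial in the $a_i$ all of whose monomials have even total degree, as follows from the explicit Pixton formula, or from the $\mathbb{Z}/r$-equivariant description), while the $\tilde\psi_0$ insertion, under the substitution into Losev--Manin coordinates, contributes the remaining $a$-dependence; the stated alternation of parity with cohomological degree then follows by bookkeeping the grading in $\suma$ coming from the normalization factors in \eqref{eqn:Omega}-type formulas. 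The divisibility by $\suma$ after applying $\pi\colon \overline{\mathcal{M}}_{g,n+1}\to\overline{\mathcal{M}}_{g,n}$ I would obtain from the string-equation-type vanishing: the extra marked point on $\overline{\mathcal{M}}_{g,n+1}$, when pushed forward, produces a factor linear in the total weight because the rubber target has a $\mathbb{C}^*$-action whose weight-$0$ locus contributes trivially — more precisely, $\mathrm{DR}_g(0,a_1,\dots,a_n,-\suma)$ restricted to the boundary where the weight-$0$ point bubbles off, or the dilaton-type relation on the Losev--Manin side, forces $\pi_*$ of the relevant class to vanish when $\suma=0$, and polynomiality then upgrades this to divisibility by $\suma$.

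The main obstacle I anticipate is establishing the comparison between $s_*$ of the virtual class on the rubber space with $\tilde\psi_0$-insertions and the corresponding DR-cycle expression on $\overline{\mathcal{M}}_{g,n+2}$ cleanly enough to transport both the degree and, especially, the parity statement; the parity claim is delicate because it intertwines the even-ness of the DR polynomial with the normalization powers of $\suma$, and one must be careful that the Losev--Manin $\psi_0$ does not spoil it. I would handle this by working directly with Pixton's formula for the DR cycle (where evenness in the $a_i$ is manifest) and with the explicit description of $\tilde\psi_0$ as a tautological class pulled back from $LM_{2g-1+n}$, reducing everything to a polynomiality and parity check in the $a_i$ for a concrete tautological expression. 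The divisibility by $\suma$, by contrast, I expect to be the easiest part once the DR/rubber dictionary is in place, since it is essentially the statement that a weight-$0$ insertion at a point that is then forgotten kills the constant term of the polynomial.
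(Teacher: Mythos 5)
The central step you rely on --- that $s_*\bigl(\tilde{\psi}_0^{\,d}\,[\oM_{g}^{\sim}(\mathbb{P}^1,a_1,\dots,a_n,-\suma)]^{\mathrm{vir}}\bigr)$ equals, up to an explicit combinatorial factor, the pushforward of $\psi_1^{\,d+1}\,\mathrm{DR}_g(0,a_1,\dots,a_n,-\suma)$ forgetting the weight-$0$ point --- is not a citable known identity, and as an identity of classes it is not correct. The available comparison is $\tilde{\psi}_0=a_i\,s^*\psi_i+D_i$, valid only at a marked point of nonzero weight $a_i$, and iterating it produces sums over boundary strata of the rubber space decorated by products of DR cycles, not a single term; at a weight-$0$ point the formula degenerates to a pure boundary expression, which is precisely why relating $\tilde{\psi}_0$-insertions to $\psi$-classes at a weight-$0$ point requires the substantial work done elsewhere in the paper (and in \cite{BSSZ}) rather than being an input one can quote. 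Even granting your identification, the degree count fails: $\psi$-classes carry no dependence on the $a_i$, so your formula would yield a polynomial of degree $2g$ (the degree of the DR polynomial), not $2g+d$. In the paper the extra $d$ arises because $\psi_0^d$ is expanded into Losev--Manin boundary strata and, in the induction on $d$, each stratum contributes a sum over ramification profiles $k_1+\cdots+k_p=|a_{I}|$ weighted by $k_1\cdots k_p$; it is the Faulhaber-type statement (\Cref{lem:multi:Faulhaber}) that turns each such sum into a polynomial whose degree and parity increase in the right way. Your parity discussion via ``normalization factors'' from the $\Omega$-class formula is not relevant to this proposition.

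The divisibility argument is also incomplete. The heuristic ``the pushforward vanishes at $\suma=0$, and polynomiality upgrades this to divisibility by $\suma$'' works only for $d=0$, where setting the forgotten weight to zero turns the DR cycle into a pullback under $\pi$ so that $\pi_*\pi^*$ kills it. For $d\geq 1$ the class is defined geometrically only for positive $a_i$ (hence $\suma>0$), so you have no direct control of the polynomial extension on the hyperplane $\suma=0$, and no string- or dilaton-type relation is identified that would supply it. The paper instead proves divisibility by a second induction on $d$, writing $\tilde{\psi}_0=a_1 s^*\psi_1+(\suma-a_1)\Delta_1+D_1'$, comparing $\psi_1$ with $\pi^*\psi_1$ on $\oM_{g,n+1}$, and invoking the nontrivial fact (\Cref{prop:DR-pushforward-divisible-bb}, proved by a long analysis of the Pixton--Zagier formula) that $\pi_*\mathrm{DR}_{g}(a_1,\dots,a_n,-\suma-b,b)$ lies in the ideal $(b^2)$; that input cannot be replaced by the vanishing heuristic you describe, so this part of your proposal also has a genuine gap.
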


This proposition is proven in \Cref{sec:polyA}. We have the following formula for the correlators of the quantum double ramification hierarchy:

\begin{thm}\label{thm: quantum A class} Let $g,n,k$ be nonnegative integers such that $2g - 2 + n > 0$. Let $(c_{g,n}^{[\mu]})_{2g-2+n>0}$ be a CohFT with unit as in \Cref{notation CohFT}. Let $d_1,\dots,d_n \geq0$ and $0\leq \alpha_1,\dots,\alpha_n \leq N$. We have:
	\begin{align}
 \left\langle \tau_{d_{1},\alpha_{1}}\cdots\tau_{d_{n},\alpha_{n}}\right\rangle _{l,g-l;k}  = \Coeff_{a_{1}^{d_{1}}\cdots a_{n}^{d_{n}}\hbar^{g-l}}\int_{\overline{\mathcal{M}}_{g,n}} A_{g,n}(a_1,\dots,a_n)   c_{g,n;k}\left(v_{\alpha_{1}},\dots,v_{\alpha_{n}}\right)
\end{align}
\end{thm}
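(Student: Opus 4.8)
The strategy is to expand both sides as the same sum over decorated stable graphs and match them.

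\emph{Algebraic side.} Starting from the definition of the correlators as the iterated star-product commutator $\frac{1}{\hbar^{n-2}}[\cdots[\Omega_{d_{1},\alpha_{1};d_{2},\alpha_{2}},\overline{H}_{d_{3},\alpha_{3}}],\cdots,\overline{H}_{d_{n},\alpha_{n}}]$ evaluated at the topological point $u_{i}^{\alpha}=\delta_{i,1}\delta_{\alpha,1}$, I would use the explicit formula for the commutator $[f,\overline{g}]$ in the $u$-variables from \cite{BR16-quantum} to unfold this expression into a sum over stable graphs. Each local functional $\overline{H}_{d_{i},\alpha_{i}}$, and the two-point function $\Omega_{d_{1},\alpha_{1};d_{2},\alpha_{2}}$ (defined through such a commutator via $\partial_{x}$, cf.\ \Cref{def:two-point}), contributes a vertex $v$ carrying a double ramification cycle, a power $\psi^{d_{v}+1}$ at its distinguished marking, the Hodge factor $\Lambda(-\epsilon^{2}/\ii\hbar)$, and a CohFT insertion $c^{[\mu]}_{g_{v},n_{v}+2}$; each application of the star-product propagator $\sum_{k>0}\ii\hbar k\,\eta^{\alpha\beta}$ produces an edge gluing two markings of opposite multiplicities $\pm k$; and the evaluation at $u_{i}^{\alpha}=\delta_{i,1}\delta_{\alpha,1}$ caps the remaining markings by the unit $v_{1}$. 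Because the star product carries contractions of all orders, these graphs need not be trees. Resumming the $\psi^{d_{v}+1}$-insertions along the graph into generating series produces the factor $1/(1-\tilde\psi_{0})$, and collecting the local Hodge factors $\Lambda(-\epsilon^{2}/\ii\hbar)$ together with the loop weights produces $\hbar^{g}\Lambda(1/\hbar)$, since $\Coeff_{\hbar^{g-l}}\bigl(\hbar^{g}\Lambda(1/\hbar)\bigr)=\lambda_{l}$. This exhibits the left-hand side as the integral against $c_{g,n;k}$ of a ``combinatorial quantum $A$-class'': a sum over stable graphs with double ramification cycles on the vertices, $\psi$-classes at the distinguished markings, and propagators $\ii\hbar k$ on the edges, pushed forward to $\oM_{g,n}$.

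\emph{Geometric side.} Next I would identify this combinatorial class with $A_{g,n}(a_{1},\dots,a_{n})$ from \Cref{def:QuantumA}. For this I would degenerate the target $\mathbb{P}^{1}$ of the rubber maps and apply the degeneration formula, together with the realisation of the $A$-class by virtual classes of stable maps to $\mathbb{P}^{1}$ from \cite{DRDZ} and the $\psi$-class pushforward formulas for double ramification cycles of \cite{BSSZ}. Breaking the target into a chain expresses $s_{*}[\overline{\mathcal{M}}_{g}^{\sim}(\mathbb{P}^{1},a_{1},\dots,a_{n},-\suma)]^{\mathrm{vir}}$, pushed to $\oM_{g,n+1}$, as a sum over stable graphs whose vertices carry precisely double ramification cycles and whose edges carry precisely the factors $k$ summed over $k>0$, with $\tilde\psi_{0}$ descending to the $\psi$-class at the distinguished marking over $0$ and distributing into the graph through the standard rubber $\psi$-calculus; the Hodge polynomial $\hbar^{g}\Lambda(1/\hbar)$ splits over boundary strata through the behaviour of the Hodge bundle along nodes, reproducing the per-vertex and loop weights of the algebraic side. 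The vertex insertions $c^{[\mu]}_{g_{v},n_{v}+2}$ reassemble into $c_{g,n;k}$ pulled back to $\oM_{g,n}$ through the CohFT gluing axioms, and the $1/\suma$ in \Cref{def:QuantumA} accounts for the auxiliary marking of vanishing multiplicity in $\mathrm{DR}_{g}(0,a_{1},\dots,a_{n},-\suma)$ together with the final $\pi_{*}$. One may equivalently run this as an induction on $n$: the base case $n=2$ is the definition of $\Omega$, and the inductive step, adding $\overline{H}_{d_{n+1},\alpha_{n+1}}$, corresponds to a degeneration splitting off a rubber component over one of the two special points.

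\emph{Bookkeeping and the cases $n\leq1$.} Finally I would reconcile the prefactors: the power $\ii^{\sum d_{i}-3g+3-n+k}$ in the normalisation of the correlators is chosen precisely to cancel the $\ii$'s coming from the DR-hierarchy Hamiltonians, the star product, and the polynomiality of the double ramification cycle, leaving the $\ii$-free right-hand side; the $1/n!$ is absorbed by the labellings of the marked points; and $\Coeff_{a_{1}^{d_{1}}\cdots a_{n}^{d_{n}}}$ is meaningful by \Cref{thm:polynomiality-A}. It suffices to treat $n\geq2$; the cases $n=0,1$ follow from the recursive definitions of the correlators together with the string equation, the latter corresponding on the $A$-class side to the standard behaviour of $A_{g,n}$ under removing a marking decorated by the unit (handled using the polynomiality of \Cref{thm:polynomiality-A} at $a_{i}=0$).

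\emph{Main obstacle.} The crux is the geometric step: pinning down the precise match between the algebraic graph expansion and the stratification of the rubber moduli space. One must (i) match the genuinely quantum edge weights $\ii\hbar k$ — absent classically and coming purely from the star product — with the self-gluings and loops in $\overline{\mathcal{M}}_{g}^{\sim}(\mathbb{P}^{1},\dots)$ and control how the Hodge polynomial $\Lambda$ distributes over them; and (ii) verify that the single class $\tilde\psi_{0}$ pulled back from the Losev--Manin space correctly encodes the collection of $\psi^{d_{i}+1}$-insertions spread over the vertices of each graph, which is exactly where the DR-cycle $\psi$-pushforward identities of \cite{BSSZ} and the behaviour of $\tilde\psi_{0}$ under target degeneration enter.
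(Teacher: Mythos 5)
Your overall architecture coincides with the paper's: expand the correlator into a sum over stable graphs whose vertices carry DR cycles, $\psi$-powers, Hodge factors and CohFT insertions, with edges coming from the star-product contractions, and then identify that graph sum with $s_*\bigl(\tfrac{1}{1-\tilde\psi_0}[\oM_g^{\sim}(\mathbb{P}^1,\vec a,-\suma)]^{\mathrm{vir}}\bigr)$ via the BSSZ evaluation of $\tilde\psi_0$ and the degeneration formula (this is exactly the paper's \Cref{prop:graph-dr-rubber}). However, there are two genuine gaps in how you set up the algebraic side. First, you propose to unfold the general two-point function $\Omega_{d_1,\alpha_1;d_2,\alpha_2}$ directly into a DR-decorated vertex, but $\Omega$ is only defined implicitly, as a $\partial_x$-antiderivative of a commutator with a separately fixed constant term (\Cref{def:two-point}, \Cref{lem:compatibility}); it has no ready-made expansion of the Hamiltonian-density type. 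The paper circumvents this by first proving the string equation from scratch and using it to reduce every correlator to one with a $\tau_{0,1}$-insertion, for which $\Omega_{0,1;d,\alpha}=H_{d-1,\alpha}$ exactly (\Cref{lem: small lemmas string}); this reduction is also what produces the factor $\tfrac1{\suma}$ and the extra marked point of multiplicity $-\suma$ in \Cref{def:QuantumA}, which you attribute somewhat vaguely to the auxiliary zero-multiplicity marking. You only invoke the string equation for $n\leq 1$, so as written your expansion of the $n$-point commutator is not justified.

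Second, your statement that the evaluation at $u_i^\alpha=\delta_{i,1}\delta_{\alpha,1}$ merely ``caps the remaining markings by the unit $v_1$'' skips the most delicate part of the argument. Evaluating at the topological point is governed by the evaluation lemma of \cite{BDGR1}: it introduces, for every $m\geq 0$, extra free markings decorated by $v_1$ carrying multiplicities $a_1,\dots,a_m$ in the DR cycles, with the coefficient of $a_1a_2\cdots a_m$ extracted. These insertions do not disappear by themselves; they are removed (or traded for $a\,\psi$-insertions at the distinguished markings) only through the push-forward property of DR cycles, i.e.\ the divisibility of $\pi_*\mathrm{DR}_g(a_1,\dots,a_n,-\suma-b,b)$ by $b^2$ (\Cref{prop:DR-pushforward-divisible-bb} and \Cref{cor: push forward DR numbers}). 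Because the full Hodge class $\Lambda(1/\hbar)$ appears rather than $\lambda_g$, one cannot restrict to compact type, and this divisibility on all of $\oM_{g,n+1}$ is a new technical ingredient the paper proves separately (Section 3, extending \cite[Lemma~5.1]{BDGR1}); your proposal never encounters this step, yet without it the algebraic graph sum does not match the strata of the rubber moduli space. A minor point: in the relevant graph sums loops at a single vertex are forbidden; the quantum corrections manifest as cycles formed by multiple edges between distinct vertices (reducing the vertex genera), not as self-gluings, so the matching you flag as the main obstacle is organized slightly differently than you anticipate.
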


This theorem is proven in \Cref{sec:proof-A-class}.

\medskip

\subsection{Quantum correlators and the \texorpdfstring{\hhh}{Hej}-class} 
We present an expression for the quantum correlators in terms of the $\hhh$-class for the class of cohomological field theories with unit $(c_{g,n}^{[\mu]})_{2g-2+n>0}$ satisfying the degree condition
\begin{align}\label{eq:DegreeAssumption}
	\deg c_{g,n;k}(v_{\alpha_1} \otimes \dots \otimes v_{\alpha_n})  < g-1+n,
\end{align} 
for all tuples $\alpha_1,\dots,\alpha_n$ and all $k\geq 0$.


\begin{thm} \label{thm:bounded-degree}
	Fix a CohFT with unit as in \Cref{notation CohFT} satisfying the degree condition~\eqref{eq:DegreeAssumption}. Let $g,n,k$ be nonnegative integers such that $2g - 2 + n > 0$.  Let $d_1,\dots,d_n \geq0$ and $0\leq \alpha_1,\dots,\alpha_n \leq N$. We have:
	\begin{itemize}
		\item The integrals $\int_{\overline{\mathcal{M}}_{g,n}} \lambda_l\text{\hhh}_{g,n}(a_1, \dots, a_n)  c_{g,n;k}(v_{\alpha_1} \otimes \dots \otimes v_{\alpha_n})$ are polynomials in $a_1,\dots,a_n$.
		\item The following formula holds:
		\begin{align}
			\langle \tau_{d_{1},\alpha_1}&\dots\tau_{d_{n},\alpha_n}\rangle_{l,g-l;k}
			=
			\mathrm{\Coeff}_{a_{1}^{d_{1}}\cdots a_{n}^{d_{n}}}\int_{\overline{\mathcal{M}}_{g,n}} \lambda_l\text{\hhh}_{g,n}(a_1, \dots, a_n)  c_{g,n;k}(v_{\alpha_1} \otimes \dots \otimes v_{\alpha_n}). \label{eq:main-theorem}
		\end{align}
	\end{itemize}
\end{thm}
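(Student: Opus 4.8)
The plan is to derive Theorem~\ref{thm:bounded-degree} from Theorem~\ref{thm: quantum A class} by comparing the quantum $A$-class with the $\lambda_l \text{\hhh}$-class under the degree hypothesis~\eqref{eq:DegreeAssumption}. The starting point is the formula
\[
\left\langle \tau_{d_{1},\alpha_{1}}\cdots\tau_{d_{n},\alpha_{n}}\right\rangle _{l,g-l;k}  = \Coeff_{a_{1}^{d_{1}}\cdots a_{n}^{d_{n}}\hbar^{g-l}}\int_{\overline{\mathcal{M}}_{g,n}} A_{g,n}(a_1,\dots,a_n)\, c_{g,n;k}\left(v_{\alpha_{1}},\dots,v_{\alpha_{n}}\right),
\]
so what must be shown is that, after pairing against $c_{g,n;k}$ (a class of cohomological degree $2k$ with $\deg c_{g,n;k} < g-1+n$) and extracting the $\hbar^{g-l}$-coefficient, the polynomial in the $a_i$ produced by $A_{g,n}$ agrees with the one produced by $\lambda_l \text{\hhh}_{g,n}$.

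\textbf{Key steps.} First, I would unfold $A_{g,n}$ using Definition~\ref{def:QuantumA}: expand $\hbar^g\Lambda(1/\hbar)/(1-\tilde\psi_0) = \sum_{l,d\ge 0} \hbar^{g-l}\lambda_l\, \tilde\psi_0^{\,d}$, so that the $\hbar^{g-l}$-coefficient of $A_{g,n}$ is $\tfrac{1}{\suma}\pi_* s_*\big(\lambda_l\sum_{d\ge0}\tilde\psi_0^{\,d}\,[\oM_g^\sim]^{\mathrm{vir}}\big)$. Next, I would invoke the known identification of the rubber virtual class with $\psi$-classes pulled back from Losev--Manin space in terms of an $\Omega$-class (i.e.\ the geometric realisation of the (quantum) $A$-class via stable maps to $\mathbb{P}^1$, as in the proof of Theorem~\ref{thm: quantum A class} / \cite{DRDZ, BLRS, BLS-Omega}), which rewrites $\pi_* s_*\big(\tilde\psi_0^{\,d}[\oM_g^\sim]^{\mathrm{vir}}\big)$ as a push-forward of $\Omega^{[\suma]}_{g,n}(\suma,0;\dots)$ times a power of $\tilde\psi_0$, and then use property~(1) of the $\Omega$-classes to absorb the shift $a_i \mapsto \suma - a_i$ into the factor $\prod_i(1-a_i\psi_i)^{-1}$, yielding precisely $\text{\hhh}_{g,n}(a_1,\dots,a_n)$ up to the excess $\tilde\psi_0$-powers. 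The essential point is then a \emph{dimension/degree count}: the fibre of $s$ over a point of $\oM_{g,n+1}$ (or the discrepancy between $\oM_{g,n+1}$ and $\oM_{g,n}$) has a fixed dimension, so the pushed-forward class $\pi_* s_*(\lambda_l \tilde\psi_0^{\,d}[\oM_g^\sim]^{\mathrm{vir}})$ lives in cohomological degree that grows with $d$; pairing with $c_{g,n;k}$, whose degree is bounded by $g-1+n$ via~\eqref{eq:DegreeAssumption}, forces all contributions with $d>0$ (and in fact with $\tilde\psi_0$ appearing at all) to vanish, leaving only the $d=0$ term, which is exactly $\lambda_l \text{\hhh}_{g,n}(a_1,\dots,a_n)$ after the substitution above. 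The polynomiality of $\int \lambda_l\text{\hhh}_{g,n}\,c_{g,n;k}$ in the $a_i$ then follows from the polynomiality already recorded in Proposition~\ref{thm:polynomiality-A} (restricted to the $\tilde\psi_0^0$-part), or equivalently from the polynomiality of the DR cycle together with property~(1).

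\textbf{Main obstacle.} The crux is the vanishing-by-degree argument: one must pin down exactly the cohomological degree of $\pi_* s_*(\lambda_l\tilde\psi_0^{\,d}[\oM_g^\sim]^{\mathrm{vir}})$ as a function of $d$, $g$, $l$, $n$ (accounting for the virtual dimension of the rubber space, the degree of $\lambda_l$, and the degree drop under $\pi_* s_*$), and check that it strictly exceeds the bound $g-1+n$ supplied by~\eqref{eq:DegreeAssumption} whenever $d\ge 1$. A secondary subtlety is bookkeeping the various normalisation factors ($\tfrac{1}{\suma}$, $\suma^{1-g}$, the powers of $\ii$, and the shift conventions in the $\Omega$-classes) so that the $d=0$ term matches $\lambda_l\text{\hhh}_{g,n}(a_1,\dots,a_n)$ on the nose rather than up to a scalar; here property~(1) of the $\Omega$-classes and the definition of $\text{\hhh}_{g,n}$ do the work, but the constants must be tracked carefully. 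Once the degree inequality is established, the rest is a direct substitution into Theorem~\ref{thm: quantum A class}.
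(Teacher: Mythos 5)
Your overall plan---deducing \Cref{thm:bounded-degree} from \Cref{thm: quantum A class} by comparing $A_{g,n}$ with $\lambda_l\text{\hhh}_{g,n}$---is indeed the strategy of the paper, but the central step you propose does not work. First, the $\tilde\psi_0^{\,0}$ term of the quantum $A$-class is not $\lambda_l\text{\hhh}_{g,n}$: by \Cref{def:QuantumA} it is $\frac{1}{\suma}\pi_*s_*\big(\hbar^g\Lambda(\tfrac1\hbar)[\oM_g^{\sim}(\PP^1,a_1,\dots,a_n,-\suma)]^{\mathrm{vir}}\big)$, i.e.\ essentially $\frac{1}{\suma}\pi_*\big(\hbar^g\Lambda(\tfrac1\hbar)\DR_g(a_1,\dots,a_n,-\suma)\big)$, whose $\hbar^{g-l}$-coefficient is a class of pure complex degree $g+l-1$, whereas $\lambda_l\text{\hhh}_{g,n}(a_1,\dots,a_n)$ has components in a whole range of degrees. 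Second, the degree count goes the other way around: $\pi_*s_*\big(\lambda_l\tilde\psi_0^{\,d}[\oM^{\sim}]^{\mathrm{vir}}\big)$ has complex degree $g+l+d-1$ on $\oM_{g,n}$, so the pairing with $c_{g,n;k}$ selects the single power $d=2g-2+n-l-k$, and under \eqref{eq:DegreeAssumption} (i.e.\ $k<g-1+n$) this forces $d\geq g-l$; in particular the $d=0$ term contributes only when $l=g$, and the surviving information sits precisely in the \emph{high} powers of $\tilde\psi_0$ that you claim vanish. There is also no termwise identification of $\pi_*s_*(\tilde\psi_0^{\,d}[\oM^{\sim}]^{\mathrm{vir}})$ with an $\Omega$-class times a power of $\tilde\psi_0$: the input from \cite{BLS-Omega} is a localization identity relating $\text{\hhh}$ to the rubber class capped with $\frac{1}{1+\tilde\psi_\infty}$ plus boundary (graph) corrections, valid only after truncation by cohomological degree, so your ``absorb the shift via property (1), up to excess $\tilde\psi_0$-powers'' step has no justification.

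The argument the paper actually runs is structurally different and cannot be bypassed by a dimension count alone. Combining the Losev--Manin relation $\frac{1}{1-\tilde\psi_0}-\frac{1}{1+\tilde\psi_\infty}-\frac{1}{1-\tilde\psi_0}\Delta\frac{1}{1+\tilde\psi_\infty}=0$ with the localization identity and an induction (\Cref{prop:key-relation}), one shows that $A_{g,n}(\vec a)-\hbar^g\Lambda(\tfrac1\hbar)\,(\text{\hhh}_{g,n}(\vec a))_{\geq (g-1)}$ is a sum of tautological classes supported on rooted stable graphs, each carrying one vertex decorated by $\hbar^{g'}\Lambda(\tfrac1\hbar)(\text{\hhh}_{g',n'}(\vec a'))_{<(g'-1)}$ (\Cref{cor:difference-Aquantum-Hej}). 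The degree assumption enters only at the very end: by the factorization property of CohFTs, the pairing of each such graph with $c_{g,n}$ contains a factor $\int_{\oM_{g',n'}}\hbar^{g'}\Lambda(\tfrac1\hbar)(\text{\hhh}_{g',n'}(\vec a'))_{<(g'-1)}\,c_{g',n'}(\cdots)$, which vanishes because its degree is at most $3g'-4+n'<\dim_{\CC}\oM_{g',n'}$ (\Cref{cor:VanishingExtraTermsUnderAssumption}); it is these graph corrections, not powers of $\tilde\psi_0$, that are killed. Finally, the polynomiality in the $a_i$'s asserted in the first bullet is obtained from the resulting identity itself (the $A$-side being polynomial by \Cref{thm:polynomiality-A}), not from ``the $\tilde\psi_0^{\,0}$-part'' of that proposition, which again rests on your misidentification of the $d=0$ term. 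Without the key relation and the analysis of the graph corrections, your proposal does not close the argument.
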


This theorem is proven in \Cref{sec:ProofOf-HEJ-Expression}. 

\begin{rem}
	The argument of \Cref{sec:ProofOf-HEJ-Expression} shows that \Cref{eq:main-theorem} is actually satisfied by CohFT satisfying milder condition than the degree assumption (\ref{eq:DegreeAssumption}), see \Cref{cor:VanishingExtraTermsUnderAssumption}.
\end{rem}

\begin{rem} Note that we don't claim the polynomiality of the class $\text{\hhh}_{g,n}(a_1, \dots, a_n)$ in $a_i$'s, though, taking into account the first  statement of \Cref{thm:bounded-degree}, it is a very plausible conjecture.
\end{rem}

An immediate corollary of \Cref{thm:bounded-degree} is the following expression for the quantum Witten-Kontsevich correlators, that is  the quantum correlators for the trivial CohFT, previously studied in~\cite{Blot}. 

\begin{cor} \label{cor:qWK} For the trivial CohFT $c_{g,n} \equiv 1 \in R^0(\oM_{g,n}, \QQ)$, we have 	
\begin{align}
	\label{eq:qWK}
		\langle \tau_{d_{1}}&\dots\tau_{d_{n}}\rangle_{l,g-l}
		=
		\Coeff_{a_{1}^{d_{1}}\cdots a_{n}^{d_{n}}}\int_{\overline{\mathcal{M}}_{g,n}} \lambda_{l} \text{\hhh}_{g,n}(a_1, \dots, a_n)
	\end{align}
for any $g,n\geq 0$, $l=0,\dots,g$ and $d_1,\dots,d_n\geq 0$. In particular, the classical correlators (for $l=g$) give a new expression for the intersection of monomial of $\psi$-classes: 
\begin{equation}
\int_{\oM_{g,n}} \frac{1}{\prod_{i=1}^n (1 - a_i \psi_i)} =  \int_{\oM_{g,n}} \lambda_g \hhh_{g,n}(a_1, \dots, a_n). 
\end{equation}
\end{cor}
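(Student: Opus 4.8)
The plan is to read off both displayed formulas directly from \Cref{thm:bounded-degree} applied to the one-dimensional CohFT $c_{g,n}\equiv 1$; the only content is to check the hypothesis and then to resum over the descendant indices.

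First I would verify that the trivial CohFT satisfies the degree condition~\eqref{eq:DegreeAssumption}. Its only nonzero homogeneous component is $c_{g,n;0}=1\in R^0(\oM_{g,n})$, so $\deg c_{g,n;k}\le 0$ for every $k$, while in the stable range $2g-2+n>0$ one always has $g-1+n\ge 1$ (check the cases $g=0,\ n\ge 3$; $g=1,\ n\ge 1$; $g\ge 2,\ n\ge 0$ separately). Hence $\deg c_{g,n;k}<g-1+n$ in all cases. Specialising \Cref{thm:bounded-degree} to $N=1$, $\alpha_1=\dots=\alpha_n=1$, $k=0$ then gives both \eqref{eq:qWK} and, from the first bullet, the polynomiality of $\int_{\oM_{g,n}}\lambda_l\,\hhh_{g,n}(a_1,\dots,a_n)$ in the $a_i$.

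Next I would treat the classical case $l=g$. By the remark recalled above on the classical limit, $F|_{\hbar=0}$ for the trivial CohFT is the Witten--Kontsevich free energy; equivalently, the $\hbar^0$-part of \Cref{thm: quantum A class} identifies $\langle\tau_{d_1}\cdots\tau_{d_n}\rangle_{g,0}$ with the intersection number $\int_{\oM_{g,n}}\psi_1^{d_1}\cdots\psi_n^{d_n}$ (the chosen normalisation of the correlators was arranged so that no power of $\ii$ survives here). Multiplying by $a_1^{d_1}\cdots a_n^{d_n}$ and summing over all $d_1,\dots,d_n\ge 0$ — a finite sum, since each $\psi_i$ is nilpotent — yields
\[
\sum_{d_1,\dots,d_n\ge 0}\langle\tau_{d_1}\cdots\tau_{d_n}\rangle_{g,0}\;a_1^{d_1}\cdots a_n^{d_n}
=\int_{\oM_{g,n}}\prod_{i=1}^n\frac{1}{1-a_i\psi_i}.
\]
On the other hand, \eqref{eq:qWK} at $l=g$ exhibits these same correlators as the coefficients of $P(a_1,\dots,a_n):=\int_{\oM_{g,n}}\lambda_g\,\hhh_{g,n}(a_1,\dots,a_n)$, which is an honest polynomial by the first bullet of \Cref{thm:bounded-degree}; hence $P$ equals the sum of its monomials, i.e.\ the left-hand side above. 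Comparing the two resummations gives $\int_{\oM_{g,n}}\prod_i(1-a_i\psi_i)^{-1}=\int_{\oM_{g,n}}\lambda_g\,\hhh_{g,n}(a_1,\dots,a_n)$.

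I do not expect a genuine obstacle here: this is a corollary in the literal sense. The one point deserving care is the identification $\langle\tau_{d_1}\cdots\tau_{d_n}\rangle_{g,0}=\int_{\oM_{g,n}}\psi_1^{d_1}\cdots\psi_n^{d_n}$; one can either invoke the (now established) DR/DZ equivalence for the trivial CohFT, or, to keep the statement self-contained, use \Cref{thm: quantum A class} together with the fact that $A_{g,n}(a_1,\dots,a_n)|_{\hbar=0}$ is the classical DR--DZ $A$-class, whose pairing with the trivial CohFT reproduces the Witten--Kontsevich numbers.
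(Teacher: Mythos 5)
Your proposal is correct and follows essentially the same route as the paper: the first formula is a direct specialisation of \Cref{thm:bounded-degree} (the trivial CohFT trivially meets the degree condition), and the second comes from identifying the classical ($l=g$, i.e.\ $\hbar=0$) correlators with the Witten--Kontsevich intersection numbers and resumming over the $d_i$. Your explicit verification of the degree bound and the remark on finiteness of the resummation are just slightly more detailed versions of what the paper leaves implicit.
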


\begin{proof}
	The first statement is straightforward. As for the second, we have that $F\vert_{\hbar=0}$ is the logarithm of the tau function associated to the topological solution of the classical DR hierarchy for the trivial CohFT. In this case, the DR hierarchy is KdV, therefore  $F\vert_{\hbar=0}$ is the Witten-Kontsevich series. 
\end{proof}

\begin{rem}
	When $l=0$, the expression of the quantum Witten-Kontsevich correlators given in \Cref{eq:qWK} only involve intersection with the $\hhh$-class. In particular, we recover the main theorem of \cite{Blot} using the relation between these integrals and one-part Hurwitz numbers provided by \cite[Theorem~3.1]{Do-Lew}, which is a specialization of \cite[Theorem~1]{JPT}. The original motivation for the present paper was to generalise this statement for $l \geq 1$. 
\end{rem}

\medskip

\subsection{Level structure for the correlators}
Fix $g,l,k \geq 0$ such that $l\leq g$. The representation of the correlators given by \Cref{thm: quantum A class} combined with the polynomial behaviours of the quantum $A$-class given in \Cref{thm:polynomiality-A} implies that the correlator $\left\langle \tau_{d_{1},\alpha_{1}}\cdots\tau_{d_{n},\alpha_{n}}\right\rangle _{l,g-l;k}$ can be nonzero only if  
\begin{equation}
	\sum d_i \leq 4g-3+n-l-k,\qquad and \qquad \sum d_i\equiv 4g-3+n-l-k \mod 2.
\end{equation}	
Each allowed value for $\sum d_i$ is called a level. We conjecture that there exists a minimal level 
and give an explicit formula for this minimal level. 
\begin{conj}
	\label{conj:minimal-level}
	Fix $g,l,k \geq 0$. The correlator $\left\langle \tau_{d_{1},\alpha_{1}}\cdots\tau_{d_{n},\alpha_{n}}\right\rangle _{l,g-l;k}$ vanishes if $\sum d_i < 2g-3+n-l-k$. Therefore, $\sum d_i$ can be non zero if it is equal to one of the $g+1$ levels
	\begin{equation}
	2g-3+n-l-k+2i,\qquad 0\leq i \leq g.
	\end{equation}
	  Moreover, in the minimal case $\sum d_i = 2g-3+n-l-k$, we have
	\begin{equation}
\left\langle \tau_{d_{1},\alpha_{1}}\cdots\tau_{d_{n},\alpha_{n}}\right\rangle _{l,g-l;k} = \int_{\overline{\mathcal{M}}_{g,n}}  \lambda_g \lambda_l \psi_1^{d_1}\cdots\psi_n^{d_n}  c_{g,n;k}\left(v_{\alpha_{1}},\dots,v_{\alpha_{n}}\right).
\end{equation}
\end{conj}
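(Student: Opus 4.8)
The plan is to establish \Cref{conj:minimal-level} by combining the two cohomological representations already available — \Cref{thm: quantum A class} for the general part, and the structure of the quantum $A$-class — with a careful degree count on $\oM_{g}^\sim(\PP^1,a_1,\dots,a_n,-\suma)$. First I would unwind the claim that the correlator vanishes for $\sum d_i < 2g-3+n-l-k$ into a statement about the $A$-class: by \Cref{thm: quantum A class}, the correlator is $\Coeff_{a_1^{d_1}\cdots a_n^{d_n}\hbar^{g-l}}\int_{\oM_{g,n}} A_{g,n}(a_1,\dots,a_n)\,c_{g,n;k}(v_{\alpha_1},\dots,v_{\alpha_n})$, and since $\deg c_{g,n;k}=k$ and $\dim\oM_{g,n}=3g-3+n$, the relevant cohomological degree of $A_{g,n}$ in $\hbar$-weight $g-l$ must be $3g-3+n-k$. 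The point is then to show that the polynomial in $a_1,\dots,a_n$ appearing as the coefficient of $\hbar^{g-l}$ in $A_{g,n}$ in that fixed cohomological degree has no monomials $a_1^{d_1}\cdots a_n^{d_n}$ of total degree $<2g-3+n-l-k+\cdots$; more precisely, that the $a$-degree is bounded below once the cohomological degree is fixed.

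The key mechanism is the $1/(1-\tilde\psi_0)$ factor in \Cref{def:QuantumA}, which trades powers of $\tilde\psi_0$ for powers of $a_i$. Concretely, I would use \Cref{thm:polynomiality-A}: $s_*(\tilde\psi_0^d[\oM_g^\sim(\PP^1,a_1,\dots,a_n,-\suma)])$ is a polynomial in the $a_i$'s of degree $2g+d$, and the class it represents sits in cohomological degree $\dim\oM_g^\sim + \deg\tilde\psi_0^d$ pushed to $\oM_{g,n+1}$, hence after $\pi_*$ in a definite degree on $\oM_{g,n}$. Expanding $1/(1-\tilde\psi_0)=\sum_{d\geq0}\tilde\psi_0^d$ and collecting the term in $A_{g,n}$ of cohomological degree $3g-3+n-k$ and $\hbar$-weight $g-l$: the $\hbar^g\Lambda(1/\hbar)$ factor contributes $\lambda_{l}$ at weight $g-l$, which has cohomological degree $l$, so we need the remaining classes to contribute cohomological degree $3g-3+n-k-l$. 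Matching $d$ to this degree via the dimension formula for $\oM_g^\sim$ gives a specific value of $d$, and then the polynomiality statement forces $\sum d_i \leq 2g+d$ divided appropriately — the arithmetic should produce exactly the bound $\sum d_i \geq 2g-3+n-l-k$ on the nonvanishing range after accounting for the extra $\suma$-divisibility from $\pi_*$ (the $1/\suma$ prefactor and the divisibility in \Cref{thm:polynomiality-A} together shift the effective range). For the minimal-level identity, once $d$ is pinned to its extremal value, the only surviving contribution comes from the top self-intersection of $\tilde\psi_0$ against the virtual class, which by the standard rubber/Losev-Manin geometry localizes to the $\psi$-class contribution on $\oM_{g,n}$; concretely $s_*(\tilde\psi_0^{2g-2+n}[\oM_g^\sim]) $ should evaluate, after $\pi_*$, to $\suma\cdot\lambda_g\psi_1^{d_1}\cdots\psi_n^{d_n}$-type terms, reproducing the claimed formula with the $\lambda_g\lambda_l$ factor coming from $\lambda_g$ (the top Hodge class that always appears in DR-type integrals) together with the $\lambda_l$ from $\Lambda(1/\hbar)$ at the right weight.

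The main obstacle I expect is pinning down the precise extremal behaviour of $s_*(\tilde\psi_0^d[\oM_g^\sim])$ — i.e., showing both that its top-$a$-degree part in the relevant cohomological degree vanishes below the conjectured level, and that at the minimal level it equals exactly $\lambda_g$ times the monomial of $\psi$-classes. The polynomiality and degree statement of \Cref{thm:polynomiality-A} gives the upper bound $2g+d$ on the $a$-degree but not obviously a matching lower bound in each fixed cohomological degree; one likely needs a finer analysis, perhaps via the boundary pushforward formula for rubber maps (the rubber calculus / Li's degeneration formula relating $s_*\tilde\psi_0^d$ to $\DR$-cycles with descendents, as in \cite{BSSZ}), to control which $a$-monomials can appear. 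An alternative route, probably cleaner, is to prove \Cref{conj:minimal-level} instead from \Cref{thm:bounded-degree} in the cases where the degree assumption \eqref{eq:DegreeAssumption} holds — there the $\hhh$-class has transparent $\psi$-dependence through the factor $\prod_i(1-a_i\psi_i)^{-1}$, so the minimal $a$-degree term picks out $\lambda_g$ (from the $\suma^{1-g}\Omega^{[\suma]}$ with its own Hodge-class structure) times $\prod\psi_i^{d_i}$ directly — and then argue the general case by a deformation or universality argument reducing an arbitrary CohFT to the low-degree ones. I would attempt the $A$-class route first since it is unconditional, falling back on the $\hhh$-class computation to guess and verify the extremal term.
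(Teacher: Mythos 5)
The statement you are trying to prove is stated in the paper as \Cref{conj:minimal-level}, i.e.\ as an open conjecture: the paper offers no proof of it. What the paper does establish (in the paragraph preceding the conjecture) is exactly the combination you reproduce in your first two paragraphs: by \Cref{thm: quantum A class} and \Cref{thm:polynomiality-A}, the dimension count pins the relevant power of $\tilde\psi_0$ to $\tilde d=2g-2+n-l-k$, the class $s_*\bigl(\tilde\psi_0^{\tilde d}[\oM_g^{\sim}(\PP^1,a_1,\dots,a_n,-\suma)]^{\mathrm{vir}}\bigr)$ is a polynomial in the $a_i$ of degree $2g+\tilde d$ with fixed parity, and the $\suma$-divisibility of its push-forward gives the \emph{upper} bound $\sum d_i\leq 4g-3+n-l-k$ together with the parity constraint. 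None of this yields the content of the conjecture, which is a \emph{lower} bound on the $a$-degree (vanishing of all monomials of total degree $<2g-3+n-l-k$ in the relevant cohomological degree and $\hbar$-weight) and the closed-form evaluation $\int_{\oM_{g,n}}\lambda_g\lambda_l\psi_1^{d_1}\cdots\psi_n^{d_n}c_{g,n;k}$ of the extremal coefficient. Your phrase ``the arithmetic should produce exactly the bound'' is precisely the step that has no support: \Cref{thm:polynomiality-A} controls the top of the $a$-degree range, not the bottom, and you acknowledge this yourself in your final paragraph. As written, the proposal restates the conjecture in terms of the quantum $A$-class rather than proving it.

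The fallback routes you sketch do not close the gap either. The claim that $s_*\bigl(\tilde\psi_0^{2g-2+n}[\oM_g^{\sim}]^{\mathrm{vir}}\bigr)$ ``localizes'' at its minimal $a$-degree to $\suma\,\lambda_g\,\psi$-monomials is not established anywhere in the paper or in the rubber calculus of \cite{BSSZ}; the graph expansion of \Cref{prop:graph-dr-rubber} expresses this class as a sum over decorated stable graphs of products of $\DR$-cycles and $\psi$-classes, and extracting its lowest-degree part in $a_1,\dots,a_n$ (including possible cancellations between graphs) is exactly the open problem, since each $\DR_{g(v)}$ is an inhomogeneous polynomial of degree $2g(v)$ whose low-degree terms are not controlled by the results used here. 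The alternative via \Cref{thm:bounded-degree} suffers from the same issue for the $\hhh$-class: the minimal $a$-degree behaviour of $\hhh_{g,n}(a_1,\dots,a_n)$ is unknown (the paper even remarks that polynomiality of $\hhh_{g,n}$ in the $a_i$ is itself only a plausible conjecture), and that route would in any case only cover CohFTs satisfying the degree condition \eqref{eq:DegreeAssumption}; your suggested ``deformation or universality argument'' reducing an arbitrary CohFT to low-degree ones is not an existing tool and would need to be constructed. So the proposal identifies the right objects to study, but the two essential steps --- the vanishing below the minimal level and the evaluation at the minimal level --- remain unproven, which is consistent with the statement being left as a conjecture in the paper (verified there only for the trivial CohFT with $l=0$, via \cite{Blot}).
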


\begin{rem}
	In the classical setting, the cohomological degree $d$ of the class $A_{g,n}(a_1,\dots,a_n)\vert_{\hbar=0} $ is a homogenous polynomial of degree $d$. Therefore, in the classical setting, there is only one level which is the top level. 
\end{rem}

\begin{rem}
	\Cref{conj:minimal-level} holds for the trivial CohFT when $l=0$ see \cite[Remark~1.39]{Blot}.
\end{rem}

\medskip

\subsection{String and dilaton} We have:

\begin{thm} \label{thm:StringANDDilaton}
Fix a CohFT with unit as in \Cref{notation CohFT}.The quantum correlators satisfy the string equation
\begin{align}
	\label{eq:STRING} \frac{\partial F}{\partial t_{0}^{1}}=\sum_{d\geq 0}t_{d+1}^{\alpha}\frac{\partial F}{\partial t_{d}^{\alpha}}+\frac{t_{0}^{\alpha}t_{0}^{\beta}}{2}\eta_{\alpha\beta}-\ii\hbar\frac{N}{24},
\end{align}
 and the dilaton equation
 \begin{align}
 	\label{eq:DILATON}  
 	\frac{\partial F}{\partial t_{1}^{1}}=\left( \sum_{d\geq0}t_{d}^{\alpha}\frac{\partial }{\partial t_{d}^{\alpha}}+\epsilon\frac{\partial }{\partial\epsilon}+2\hbar\frac{\partial }{\partial\hbar}-2\right)F+\epsilon^{2}\frac{N}{24}-\ii\hbar\int_{\overline{\mathcal{M}}_{1,1}}c_{1,1}^{[\mu]}\left(1\right). 
 \end{align}

\end{thm}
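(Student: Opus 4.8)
The plan is to prove the string and dilaton equations directly from the definitions of the correlators as iterated commutators of Hamiltonians, exploiting the special role played by $H_{-1,1}$ (the ``string operator'') and $H_{0,1}$ (the ``dilaton operator''). The key input is that the Hamiltonian densities $H_{d,\alpha}$ are built from DR cycles with a $\psi_1^{d+1}$-class, so that the insertions of $H_{-1,1}$ and $H_{0,1}$ can be analysed via the standard string and dilaton equations on $\oM_{g,n}$ together with the behaviour of the DR cycle, the Hodge class $\Lambda$, and the CohFT $c_{g,n}^{[\mu]}$ under pulling back along the forgetful map $\pi\colon\oM_{g,n+1}\to\oM_{g,n}$. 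Concretely, $H_{-1,1}$ inserts $\psi_1^0=1$ at a point carrying $v_1=\bm 1$, and the CohFT axioms for the unit give $c_{g,n+2}^{[\mu]}(\dots,\bm 1)=\pi^*c_{g,n+1}^{[\mu]}(\dots)$, while $\mathrm{DR}_g(0,a_1,\dots,a_n,0,-\suma)=\pi^*\mathrm{DR}_g(0,a_1,\dots,a_n,-\suma)$ and $\Lambda$ also pulls back; the only non-trivial contribution will come from the unstable correction terms and from the $\psi$-class comparison $\pi^*\psi_i=\psi_i-[D_i]$, which is exactly what produces the right-hand side of~\eqref{eq:STRING} and~\eqref{eq:DILATON}.

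First I would set up the string equation at the level of the generating series $F$. The strategy is to show that inserting a $\tau_{0,1}$ into a correlator, i.e.\ computing $\langle\tau_{0,1}\,\tau_{d_1,\alpha_1}\cdots\tau_{d_n,\alpha_n}\rangle_{l,g-l;k}$, equals $\sum_i\langle\tau_{d_1,\alpha_1}\cdots\tau_{d_i-1,\alpha_i}\cdots\tau_{d_n,\alpha_n}\rangle_{l,g-l;k}$ up to the correction terms that are visible only in low $(g,n)$: the quadratic term $\frac{t_0^\alpha t_0^\beta}{2}\eta_{\alpha\beta}$ comes from the $2$-point function $\Omega_{d_1,\alpha_1;d_2,\alpha_2}$ and the convention~\eqref{eq:constant-two point function} fixing its constant term, and the term $-\ii\hbar N/24$ comes from the genus-$1$, one-pointed contribution $\langle\tau_{0,1}\rangle_{0,1}=\langle\tau_{0,1}\tau_{1,1}\rangle_{0,1}$ computed over $\oM_{1,1}$ with the Hodge class. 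I would phrase the inductive step on $n$ using the commutator presentation: since $\overline H_{0,1}$ is (up to normalisation) the momentum/string Hamiltonian, the commutator $[\,\cdot\,,\overline H_{0,1}]$ acts on Hamiltonian densities in a controlled way — this is precisely the content of the known string equation for the DR hierarchy in \cite{Bur, BDGR20}, and in the quantum setting the star-product commutator with $\overline H_{0,1}$ should reproduce $\partial_x$-type shifts. The definition~\eqref{eq:def-two-point} and the chosen constant~\eqref{eq:constant-two point function} are designed exactly so that these shifts assemble into the stated string equation.

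For the dilaton equation I would instead use the representation of the correlators by the quantum $A$-class from \Cref{thm: quantum A class}, as the remark following the theorem statement already advertises. The point is that $\langle\tau_{1,1}\,\tau_{d_1,\alpha_1}\cdots\tau_{d_n,\alpha_n}\rangle$ corresponds to inserting $\tilde\psi_0$-type data and a unit insertion into $\int_{\oM_{g,n+1}}A_{g,n+1}(a_0,a_1,\dots,a_n)\,c_{g,n+1;k}(\bm 1,v_{\alpha_1},\dots,v_{\alpha_n})$, and one analyses how $A_{g,n+1}$ with a unit insertion relates to $A_{g,n}$ under the forgetful map on the moduli of rubber maps: forgetting a point with trivial ramification over a generic target point, together with the dilaton-type relation $\pi_*\psi_{n+1}^2 = \psi$-shift, yields the Euler-field operator $\sum_d t_d^\alpha\partial_{t_d^\alpha}+\epsilon\partial_\epsilon+2\hbar\partial_\hbar-2$ acting on $F$, the dimensional weights of $\epsilon$ (degree $-1$) and $\hbar$ (degree $-2$) matching the grading of $\mathcal A_N$. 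The residual terms $\epsilon^2 N/24$ and $-\ii\hbar\int_{\oM_{1,1}}c_{1,1}^{[\mu]}(\bm 1)$ again arise from the unstable/low-genus corrections to the forgetful-map comparison, exactly as the classical dilaton equation for the DZ/DR hierarchies produces a constant anomaly.

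The main obstacle, I expect, is the careful bookkeeping of the anomalous constant terms: both the quadratic piece in the string equation and the two genus-specific corrections ($\pm$ multiples of $N/24$ and $\int_{\oM_{1,1}}c_{1,1}^{[\mu]}(\bm 1)$) live precisely in the regime where the naive ``add a marked point'' argument breaks down because of instability of $\oM_{0,\le 2}$ and $\oM_{1,0}$, and because of the non-standard choice of constant term for the two-point function in \Cref{def:two-point}. Tracking the powers of $\ii$, $\hbar$, $\epsilon$, $\mu$ through the commutator formula and matching them against the normalisation $\ii^{\sum d_i-3g+3-n+k}$ in~\eqref{eq:def:tautoptype} will require genuine care; in particular one must verify that the $(-\ii)$ prefactors in $F$ are what make the anomaly come out as $-\ii\hbar N/24$ rather than with some other phase. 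Everything else — the pull-back behaviour of DR, $\Lambda$, $\psi$, and the CohFT unit axiom — is routine once the low-order cases are nailed down, so I would organise the write-up as: (i) recall the classical string/dilaton for the DR hierarchy and its quantum lift via the star-product commutator; (ii) derive the bulk of~\eqref{eq:STRING} by induction on $n$ through the commutator presentation, isolating the $n\le 2$ and $g\le 1$ exceptional contributions; (iii) derive~\eqref{eq:DILATON} from the $A$-class representation and the forgetful-map analysis on rubber maps, again isolating the exceptional terms; (iv) assemble the constants.
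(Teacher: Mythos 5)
Your overall architecture coincides with the paper's: the string equation is proved there directly from the definition of the correlators, by induction on the number of insertions through the commutator presentation (\Cref{sec:proof-string}, where the induction step rests on $[f,\overline{H}_{0,1}]=\hbar\,\partial_x f$, on $\partial H_{d,\alpha}/\partial p_0^1=H_{d-1,\alpha}$, and on the choice of constants \eqref{eq:constant-two point function}, while the exceptional terms $\tfrac12\eta_{\alpha\beta}t_0^\alpha t_0^\beta$ and $-\ii\hbar N/24$ come from the low $(g,n)$ evaluations), and the dilaton equation is deduced from the quantum $A$-class representation of \Cref{thm: quantum A class}, exactly as you propose (\Cref{sec:ProofDilaton}). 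So the string half of your plan is essentially the paper's proof; note only that the base cases are handled there with the evaluation lemma of \cite[Lemma~6.2]{BDGR1} together with the push-forward property of DR cycles (\Cref{cor: push forward DR numbers}), rather than with the pull-back relations you list.

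The genuine gap is in the dilaton half. In the $A$-class representation the insertion $\tau_{1,1}$ does not correspond to forgetting a trivially ramified point: it corresponds to adding a point of ramification $b$ (carrying a unit insertion of the CohFT) and extracting the coefficient of $b^1$; moreover $\pi_*\psi_{n+1}^2$ is a $\kappa$-class and is not the relation that is used. What actually makes the computation close is: (i) the string equation first reduces the statement to one about the class $A^1_{g,n}$ of \eqref{eq:A1-class}; (ii) one applies \eqref{eq:tilde-psi-0}, $\tilde\psi_0=b\,s^*\psi_{n+1}+D_{n+1}$, at the new point; (iii) every term not carrying $s^*\psi_{n+1}$ vanishes at order $b^1$ because $\pi_*\mathrm{DR}_g(a_1,\dots,a_n,-\suma-b,b)$ is divisible by $b^2$ --- this is \Cref{prop:DR-pushforward-divisible-bb}, a substantive new result proved over all of $\oM_{g,n+1}$ (not just compact type) via the Pixton--Zagier formula, and your plan offers no substitute for it: ``unstable corrections'' alone do not cancel these boundary contributions; (iv) the factor $2g-1+n$ then arises from the projection formula for the $\psi$-class combined with the symmetrized evaluation of $\tilde\psi_0$ from \cite{BSSZ}, reassembling the right-hand side back into $A^1_{g,n}$. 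Without step (iii) the forgetful-map argument you sketch stalls at the divisors $D_{n+1}$ and at the leading $\mathrm{DR}$ term, so you would need to state and prove (or invoke) this $b^2$-divisibility before your dilaton computation can be completed.
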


\begin{rem} With respect to \Cref{thm: quantum A class}, the string equation \eqref{eq:STRING} and the dilaton equation \eqref{eq:DILATON} have different status:
\begin{itemize}
	\item We prove the string equation right from the definition and it is a part of the proof of \Cref{thm: quantum A class}. See \Cref{sec:proof-string} below.
	\item The dilaton equation is a corollary of  \Cref{thm: quantum A class}. We prove it in \Cref{sec:ProofDilaton} below. 
\end{itemize}
\end{rem}

\begin{rem} The dilaton equation for the trivial CohFT was conjectured in~\cite[Conjecture 3]{Blot}. Specialising \Cref{thm:StringANDDilaton} to the trivial CohFT proves this conjecture. 
\end{rem}

\medskip

\subsection{Push-forward of the double ramification cycles}
We now present the last statement, which, although not being directly related to the quantum tau function, is a highly useful property of DR cycles. This property extends \cite[Lemma~5.1]{BDGR1} from the moduli space of compact type curves to the moduli space of stable curves.

\begin{prop} \label{prop:DR-pushforward-divisible-bb} We have 
\begin{equation}
\pi_*DR_{g} \left(a_1,\dots,a_n,-\suma-b,b\right) \in b^2 R^g(\overline{\mathcal{M}}_{g,n+1}, \mathbb{Q})[a_1,\dots,a_n,b], \qquad \qquad \suma \coloneqq \sum_i a_i,
\end{equation}
where $\pi:\overline{\mathcal{M}}_{g,n+2}\rightarrow\overline{\mathcal{M}}_{g,n+1}$ forgets the last marked point. In other words, each coefficient of the class above expressed as tautological class, after possible simplifications, is divisible by $b^2$.
\end{prop}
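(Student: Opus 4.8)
The plan is to exploit the behaviour of the double ramification cycle under the forgetful morphism together with its known polynomiality in the $a_i$'s and the $b$-variable. Recall from \cite{PZ,Pim} that $DR_{g}(a_1,\dots,a_n,-\suma-b,b)$ is a polynomial of degree $2g$ in all of its arguments, hence in particular in $b$. So the class $\pi_*DR_{g}(a_1,\dots,a_n,-\suma-b,b)$ is a polynomial in $b$ with coefficients in $R^g(\oM_{g,n+1},\QQ)[a_1,\dots,a_n]$, and it suffices to show that the constant term (the coefficient of $b^0$) and the linear term (the coefficient of $b^1$) both vanish. For the linear term one can use the parity: the DR cycle is even in the sense that $DR_g(-a_1,\dots,-a_n)=DR_g(a_1,\dots,a_n)$, and combined with the way the last two entries are paired, one checks that $\pi_*DR_{g}(a_1,\dots,a_n,-\suma-b,b)$ is invariant (up to the relabelling that swaps the two forgotten-type insertions) under $b\mapsto -b$, which kills all odd powers of $b$; hence only the vanishing of the constant term in $b$ remains to be proven.

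The key step is therefore to show $\pi_*DR_{g}(a_1,\dots,a_n,-\suma,0)=0$ in $R^g(\oM_{g,n+1},\QQ)$, where $\pi$ forgets the point carrying the ``$0$''. Here I would invoke the standard fact that a DR cycle with a vanishing multiplicity at a marked point equals the pullback of a DR cycle under the corresponding forgetful map: $DR_{g}(a_1,\dots,a_n,-\suma,0)=\pi^{*}DR_{g}(a_1,\dots,a_n,-\suma)$, which follows from the description of $DR_g$ as a pushforward from a space of rubber maps (the marked point with zero ramification is simply a free point on the source curve) or alternatively from Pixton's formula. Given this, the projection formula yields
\[
\pi_*DR_{g}(a_1,\dots,a_n,-\suma,0)=\pi_*\pi^{*}DR_{g}(a_1,\dots,a_n,-\suma)=DR_{g}(a_1,\dots,a_n,-\suma)\cdot\pi_*(1)=0,
\]
since $\pi_*(1)=0$ for dimension reasons (the fibre of $\pi$ is one-dimensional, so $\pi_*$ drops cohomological degree by $2$ and annihilates the fundamental class). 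This gives the vanishing of the $b^0$ coefficient.

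Combining the two vanishings, $\pi_*DR_{g}(a_1,\dots,a_n,-\suma-b,b)$ is a polynomial in $b$ all of whose terms of degree $0$ and $1$ vanish, hence it lies in $b^2 R^g(\oM_{g,n+1},\QQ)[a_1,\dots,a_n,b]$, as claimed. The main obstacle I anticipate is justifying the pullback identity $DR_{g}(\dots,0)=\pi^{*}DR_{g}(\dots)$ cleanly on the full moduli space of stable curves rather than on compact type: on compact type this is \cite[Lemma~5.1]{BDGR1}, and the present statement is precisely its extension, so the real content is checking that the rubber-map / Pixton-formula description of $DR_g$ behaves correctly under forgetting a point of zero multiplicity over the boundary strata where the curve degenerates. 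This can be handled either by Pixton's explicit formula, tracking how the forgotten point contributes only through $\psi$- and boundary-terms that trivialise when its multiplicity is $0$, or by the compatibility of the virtual class on the rubber space with the forgetful morphism; I would write it using whichever of these the paper has set up, and the parity argument for the odd part is then a short bookkeeping check.
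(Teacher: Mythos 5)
The reduction to showing that the $b^{0}$ and $b^{1}$ coefficients vanish is fine, and your treatment of the constant term is correct: $DR_g(a_1,\dots,a_n,-\suma,0)=\pi^*DR_g(a_1,\dots,a_n,-\suma)$ is a standard property of the DR cycle on all of $\oM_{g,n+2}$ (not just compact type), and then $\pi_*\pi^*=0$ kills the $b^0$ term. That part matches known facts and needs no repair.

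The genuine gap is the parity argument for the linear term, which is precisely the hard content of the proposition. The evenness of the DR polynomial only gives $P(-a_1,\dots,-a_n,-b)=P(a_1,\dots,a_n,b)$ for $P=\pi_*DR_g(a_1,\dots,a_n,-\suma-b,b)$, i.e.\ it constrains the \emph{total} parity in all variables jointly; it does not make $P$ even in $b$ alone. Monomials such as $b\,a_i$ are perfectly compatible with this parity and carry exactly the $b^1$ coefficient you need to kill. Nor does ``swapping the two extra insertions'' help: the transposition of the points carrying $-\suma-b$ and $b$ implements the substitution $b\mapsto-\suma-b$, and it simultaneously changes which point is forgotten, so it relates $\pi_*$ to the pushforward forgetting the other point rather than giving a $b\mapsto-b$ symmetry of the same class. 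In fact, in the paper's proof (via the Pixton--Zagier spanning-tree formula) the individual contributions of the graphs obtained by adding the forgotten point do contain nonzero terms linear in $b$, such as $-2ba_i$ and $+2b\suma$, and these cancel only after summing over all such graphs and using $\suma=\sum_i a_i$; no symmetry kills them termwise. So the vanishing of the $b^1$ coefficient requires an actual cancellation argument (the paper's Section on the pushforward property, or on compact type the explicit Hain-formula computation of \cite[Lemma~5.1]{BDGR1}), and your proposal as written does not supply it.
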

This proposition is proven in \Cref{sec:push-forward-DR}. It has the following direct corollary.

\begin{cor}
\label{cor: push forward DR numbers} Fix a CohFT with unit as in \Cref{notation CohFT}. Fix $g,n,l,d\geq0$ such that $\left(g,n\right)\neq\left(0,1\right)$. We have 
\begin{align}
 & \int_{{\rm DR}_{g}\left(a_{1},\dots,a_{n},-\sum a_{i}-b,b\right)}\psi_{1}^{d}\lambda_{l}c_{g,n+2}\left(\otimes_{i=1}^{n+1}v_{\alpha_{i}}\otimes v_{1}\right)=\\
 &\qquad \qquad \qquad \qquad =\begin{cases}
\int_{{\rm DR}_{g}\left(a_{1}+b,\dots,a_{n},-\sum a_{i}-b\right)}\psi_{1}^{d-1}\lambda_{l}c_{g,n+2}\left(\otimes_{i=1}^{n+1}v_{\alpha_{i}}\right)+O\left(b^{2}\right) & {\rm if}\;d\geq1,\\
O\left(b^{2}\right) & {\rm if}\;d=0. \nonumber
\end{cases}
\end{align}

\end{cor}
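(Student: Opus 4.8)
The plan is to derive the corollary directly from Proposition \ref{prop:DR-pushforward-divisible-bb} by a routine manipulation with $\psi$-classes and the universal curve. First I would note that the integrand on the left-hand side lives on $\overline{\mathcal{M}}_{g,n+2}$, and the statement only involves classes pulled back from $\overline{\mathcal{M}}_{g,n+1}$ (namely $\lambda_l$ and $c_{g,n+2}(\otimes_i v_{\alpha_i}\otimes v_1)$, since the last insertion is the unit $v_1=\bm1$ and the CohFT with unit axiom expresses $c_{g,n+2}(\dots\otimes v_1)$ as $\pi^*c_{g,n+1}(\dots)$), together with $\psi_1^{d}$, which is \emph{not} a pullback. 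So the first step is to use the standard comparison $\psi_1 = \pi^*\psi_1 + [D]$ on $\overline{\mathcal{M}}_{g,n+2}$, where $D$ is the boundary divisor on which the first and the last marked points collide, and where $\pi\colon\overline{\mathcal{M}}_{g,n+2}\to\overline{\mathcal{M}}_{g,n+1}$ forgets the last point. Iterating, $\psi_1^d = \sum_{j=0}^{d}\pi^*\psi_1^{d-j}\cdot[D]^j$ up to the usual self-intersection formula $[D]^{j} = (-1)^{j-1}[D]\cdot(\pi^*\psi_1)^{\,0}\cdots$; more cleanly, $\psi_1^d$ restricted against classes pulled back from the base equals $\pi^*(\psi_1^{d-1})\cdot\psi_1 = \pi^*(\psi_1^{d-1})\cdot(\pi^*\psi_1+[D])$, and on $D$ one has $\psi_1|_D = 0$ by the dilaton-type relation for the forgotten point, so effectively $\psi_1^d = \pi^*(\psi_1^{d-1})\cdot\psi_1$ modulo classes supported on $D$ where the excess $\psi_1$ vanishes.

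The second step is to apply the projection formula. Write the left-hand integral as $\int_{\overline{\mathcal{M}}_{g,n+1}} \pi_*\big(\mathrm{DR}_g(a_1,\dots,a_n,-\suma-b,b)\cdot\psi_1^d\big)\cdot\lambda_l\, c_{g,n+1;k}(\otimes v_{\alpha_i})$, using the unit axiom to replace $c_{g,n+2}(\dots\otimes v_1)$ by $\pi^*c_{g,n+1}(\dots)$ and pulling $\lambda_l$ out. Then I split according to the previous step: the part of $\psi_1^d$ that is $\pi^*(\psi_1^{d-1})\cdot\pi^*\psi_1$ contributes $\psi_1^{d}\cdot\pi_*\mathrm{DR}_g(\dots)$ — wait, more carefully, $\pi_*\big(\mathrm{DR}_g\cdot\pi^*(\psi_1^{d})\big) = \psi_1^d\cdot\pi_*\mathrm{DR}_g(\dots,b)$, and by Proposition \ref{prop:DR-pushforward-divisible-bb} this is $O(b^2)$; while the part of $\psi_1^d$ supported on $D$ gives, by the self-intersection/excess formula and restriction to $D\cong\overline{\mathcal{M}}_{g,n+1}$, a term in which the two marked points carrying $a_{n+1}=-\suma-b$ and $a_{n+2}=b$ have been glued into a single marked point carrying $a_{n+1}+a_{n+2}=-\suma$; combined with the DR splitting/pullback formula along $D$ this produces precisely $\mathrm{DR}_g(a_1+b,\dots,a_n,-\suma-b)$ (the label of the gluing point absorbs the $b$), with one power of $\psi_1$ consumed by the normal-bundle factor — this is where $\psi_1^{d-1}$ appears, hence the case distinction $d\geq1$ versus $d=0$.

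The third step is just bookkeeping: when $d=0$ there is no $[D]$-term at all, the entire integrand is $\pi^*(\text{something})\cdot\lambda_l$ times $\pi_*\mathrm{DR}_g(\dots,b)$, which is $O(b^2)$ by the proposition, giving the second case. When $d\geq1$, one identifies the surviving boundary contribution with the DR integral on $\overline{\mathcal{M}}_{g,n+1}$ with the shift $a_1\mapsto a_1+b$ (equivalently, the argument $-\suma-b$ on the last slot), again modulo $O(b^2)$ coming from the $\pi^*\psi_1$-part. I would double check the precise form of the restriction of $\mathrm{DR}_g(a_1,\dots,a_n,-\suma-b,b)$ to the divisor where the last two points collide, using the known formula $\mathrm{DR}_g(\dots)|_{D} = \mathrm{DR}_g(a_1,\dots,a_n,-\suma)\times \mathrm{DR}_0(-\suma-b,b,\suma)$ on the corresponding product of moduli, and the fact that $\mathrm{DR}_0$ on a three-pointed genus-$0$ space with two of the three entries being $\pm$ is a point class — so gluing back reproduces $\mathrm{DR}_g$ with the merged marking. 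The condition $(g,n)\neq(0,1)$ is needed so that all the moduli spaces in sight are nonempty/stable and the forgetful map $\pi$ and the divisor $D$ make sense.

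The main obstacle I expect is \textbf{step two}: correctly tracking how the one leftover power of $\psi_1$ gets consumed by the excess-intersection (self-intersection of $D$) factor, and checking that the remaining boundary term is exactly $\int_{\mathrm{DR}_g(a_1+b,\dots,-\suma-b)}\psi_1^{d-1}\lambda_l c_{g,n+1;k}(\dots)$ with the right $a_i$-shift and no spurious extra factors of $\suma$ or $b$. The divisibility statement of Proposition \ref{prop:DR-pushforward-divisible-bb} handles the $\pi^*$-pullback part cleanly, so once the boundary restriction of DR is pinned down, the rest is formal. One should also be a little careful that the shift in the argument can be placed on \emph{any} of the first $n$ entries (here written on $a_1$), which follows from the $S_n$-symmetry of DR cycles in those entries together with the relation $\sum a_i=0$ after the shift.
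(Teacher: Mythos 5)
Your skeleton is indeed the intended derivation: the paper offers no separate proof (the statement is asserted as a direct consequence of \Cref{prop:DR-pushforward-divisible-bb}), and the implicit argument is exactly the one you outline — use the unit axiom $c_{g,n+2}(\cdots\otimes v_1)=\pi^*c_{g,n+1}(\cdots)$ and $\lambda_l=\pi^*\lambda_l$, compare $\psi_1=\pi^*\psi_1+[D]$, apply the projection formula, and let the proposition kill the pulled-back part. However, your execution of the boundary step contains a genuine error. The correction divisor $D$ in the comparison formula (for $\pi$ forgetting the $(n{+}2)$-nd point, with $\psi_1$ at the first point) is the locus where markings $1$ and $n{+}2$ collide — as you state at the beginning — and the restriction of ${\rm DR}_{g}(a_1,\dots,a_n,-\sum_i a_i-b,\,b)$ to $D\cong\oM_{g,n+1}$ merges the multiplicities $a_1$ and $b$ at the node, yielding ${\rm DR}_{g}(a_1+b,a_2,\dots,a_n,-\sum_i a_i-b)$ with the node playing the role of the first marking. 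In your second step and in your ``double check'' you instead restrict to the divisor where the \emph{last two} points collide and merge $-\sum_i a_i-b$ with $b$ into $-\sum_i a_i$: that divisor does not occur in the comparison formula at all, and the restriction formula you quote would produce ${\rm DR}_{g}(a_1,\dots,a_n,-\sum_i a_i)$, not the class ${\rm DR}_{g}(a_1+b,\dots,a_n,-\sum_i a_i-b)$ that you then assert ``is produced precisely''. As written, the claimed output of the boundary term does not follow from the mechanism you describe; with the correct divisor it does, and then the shift $a_1\mapsto a_1+b$ is forced to sit at the marking carrying the $\psi$-class — it is not merely an $S_n$-relabelling choice as your last remark suggests.

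A second, smaller point: no excess-intersection or normal-bundle factor is needed. Since $\psi_1\cdot[D]=0$ (on $D$ the first marking lies on a three-pointed rational bubble), one gets directly $\psi_1^{d}=(\pi^*\psi_1)^{d}+(\pi^*\psi_1)^{d-1}[D]$ for $d\geq1$; the first term contributes $\int_{\oM_{g,n+1}}\pi_*\big({\rm DR}_{g}(a_1,\dots,a_n,-\sum_i a_i-b,b)\big)\,\psi_1^{d}\lambda_l\,c_{g,n+1}(\cdots)=O(b^2)$ by the proposition, while in the second term $\pi^*\psi_1$ restricts to the $\psi$-class at the node, giving $\int_{{\rm DR}_{g}(a_1+b,a_2,\dots,a_n,-\sum_i a_i-b)}\psi_1^{d-1}\lambda_l\,c_{g,n+1}(\cdots)$, i.e.\ the main term; the power drops by one simply because one factor of $\psi_1$ is spent producing $[D]$, not because of a self-intersection formula. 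Your $d=0$ case is handled correctly. The one ingredient you should state and justify explicitly, since it is the heart of the boundary step, is the compatibility $[D]\cdot{\rm DR}_{g}(a_1,\dots,a_n,-\sum_i a_i-b,b)=\iota_*\,{\rm DR}_{g}(a_1+b,a_2,\dots,a_n,-\sum_i a_i-b)$ for this specific divisor (it is standard, e.g.\ from the rubber-map description or from compatibility of Pixton's formula with boundary pullbacks), rather than the splitting along the divisor of the last two points, which is the wrong locus here.
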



\bigskip
\section{Proof of the push-forward property of the DR cycles}
\label{sec:push-forward-DR}

In this section we prove \Cref{prop:DR-pushforward-divisible-bb}. We use the formula of Janda et al.~\cite{JPPZ} in combination with the Zagier formula through the spanning trees~\cite{PZ}. Let us first recall the formula for $DR_{g}(a_1,\dots,a_n)$, $\sum_{i=1}^n a_i = 0$, and then we adapt it to our choice of parameters. 
	\begin{align} \label{eq:DR-Pixton-Zagier}
		& DR_{g}(a_1,\dots,a_n) = \\ \notag
		& \frac 1{2^g} \sum_{\Gamma\in \mathrm{SG}_{g,n}} \frac{(-2)^{|E(\Gamma)|} }{|\mathrm{Aut}(\Gamma)|}\sum_{ \substack {d\colon H(\Gamma)  \to\ZZ_{\geq 0}} }
		\Bigg( (\mathsf{b}_\Gamma)_* \bigg(\prod_{i=1}^n \psi_{\ell_i}^{d({\ell_i})} \prod_{e\in E(\Gamma)} \psi_{h(e)}^{d({h(e)})} \psi_{t(e)}^{d({t(e)})} \bigg) \Bigg)_g
		\\ \notag & \qquad \qquad 
		\times \prod_{i=1}^n \frac{a_i^{ 2d({\ell_i})}}{d({\ell_i})!} \prod_{e\in E(\Gamma)} \frac{(2d({h(e)})+2d({t(e)})+1)!}{d({h(e)})!d({t(e)})!}
		\\ \notag & \qquad \qquad 
		\times \mathrm{\Coeff}_{\left(\prod_{e\in E(\Gamma)} x_e^{2d({h(e)})+2d({t(e)})+2}\right)} 
		\sum_{T \in \mathrm{SpTr}(\Gamma)} \prod_{f\in E(T)} e^{a_{f,T} x_f} \prod_{f\not\in E(T)} \frac{x_f}{e^{x_{f,T}}-1}.
	\end{align}
	Let us list the notation that we use here (for the convenience of the reader we also partially recall the standard conventions).
	\begin{itemize}
		\item The set $\mathrm{SG}_{g,n}$ is the set of stable graphs of genus $g$ with $n$ leaves.
		\item The first sum is over the stable graphs $\Gamma$, with the set of leaves $L(\Gamma)$ and edges $E(\Gamma)$. The leaves are labeled by the bijection $\ell: \{1,\dots,n\} \rightarrow L(\Gamma)$, $i\mapsto \ell_i$. 
		\item $|\mathrm{Aut}(\Gamma)|$ denotes the order of the automorphism group of $\Gamma$. 
		\item For each stable graph $\Gamma$ we choose and fix an orientation on its edges, in an arbitrary way. The ingredients of the formula depend on this choice, but the resulting expression does not (it is part of the statement of~\cite{PZ}). 
		\item Let $H(\Gamma)$ be the set of half-edges of $\Gamma$. Using orientations on the edges we define $h\colon E(\Gamma)\hookrightarrow H(\Gamma)$ and $t\colon E(\Gamma)\hookrightarrow H(\Gamma)$ (heads and tails), and we have $H(\Gamma) = L(\Gamma)  \sqcup h(E(\Gamma)) \sqcup t(E(\Gamma))$. 
		\item The second sum is over all maps $d\colon H(\Gamma)\to\ZZ_{\geq 0}$.
		\item Let $V(\Gamma)$ be the set of vertices of $\Gamma$. 
		\item We associate to each vertex $v\in V(\Gamma)$ the moduli space of curves $\MMM {g(v)} {n(v)}$, where $n(v)$ is the number of the incident half-edges. By $\psi_h$, where $h \in H(\Gamma)$ and is attached to $v$, we mean the $\psi$-class on  $\MMM {g(v)} {n(v)}$ at the marked point corresponding to $h$. We put in correspondence with each vertex $v\in V(\Gamma)$ a monomial of $\psi$-classes in the tautological ring of $\MMM {g(v)} {n(v)}$.
		\item By $\mathsf{b}_\Gamma$ we mean the boundary morphism map to the tautological ring $\MMM{g}{n}$ associated to $\Gamma$. 
		\item For a class $C\in R^*(\MMM gn)$ let $(C)_g$ denote its homogeneous component of degree $g$. 
		\item The operation $\mathrm{\Coeff}_{\prod_{i=1}^k x_i^{p_i}}$ extracts the coefficient of $\prod_{i=1}^k x_i^{p_i}$ in the Laurent series to its immediate right. 
		\item The sum over $T \in \mathrm{SpTr}(\Gamma)$ is the sum over all spanning trees $T$. 
		\item $x_e$'s are the formal variables associated to the edges of $\Gamma$. 
		\item $a_{f,T}$ is defined as the sum of $a_i$'s over all leaves attached to the vertices that are ahead of $f$ in $T$ with respect to the orientation of $f$. 
		\item $x_{f,T}$ is the sum of $\pm x_e$ over all edges $e$ that enter the unique cycle formed by $f$ and edges in $T$. The sign is `$+$' if the orientation of $e$ agrees with the orientation of $f$ in the cycle, and `$-$' otherwise. 
	\end{itemize}
	Note that the summands in the sum over $T \in \mathrm{SpTr}(\Gamma)$ are not formal power series in $x_e$, $e\in E(\Gamma)$; however, the whole sum is. In order to work with this formula term-wise one has to consider the formal Laurent expansions of the factors in some sector (fixed to be the same for all terms) and then select the coefficients of the resulting Laurent series. 
	
	Now we apply Equation~\eqref{eq:DR-Pixton-Zagier} to $DR_{g}(a_1,\dots,a_n,-A-b,b)$ and take the push-forward with respect to $\pi\colon \MMM{g}{n+2}\to \MMM g{n+1}$ that forgets the last marked point. We manifestly get a polynomial in $a_1,\dots,a_n,b$. 
	Our next goal is to analyze this polynomial up to $O(b^2)$ terms (that is, the terms lying in the ideal $(b^2)
	\subset \QQ[a_1,\dots,a_n,b]$). 
	
	Note that once we have a non-trivial degree of $\psi_{n+2}$ in the formula $DR_{g}(a_1,\dots,a_n,-A-b,b)$, the coefficient is proportional to $b^2$. Hence, 
	\begin{align} \label{eq:DR-Pixton-Zagier-pushforward}
		& \pi_*DR_{g}(a_1,\dots,a_n,-A-b,b) = O(b^2) + \\ \notag
		& \frac 1{2^g} \sum_{\Gamma\in \mathrm{SG}_{g,n+2}} \frac{(-2)^{|E(\Gamma)|} }{|\mathrm{Aut}(\Gamma)|}\sum_{ \substack {d\colon H(\Gamma)  \to\ZZ_{\geq 0}} }
		\Bigg( \pi_* (\mathsf{b}_\Gamma)_* \bigg(\prod_{i=1}^{n+1} \psi_{\ell_i}^{d({\ell_i})} \prod_{e\in E(\Gamma)} \psi_{h(e)}^{d({h(e)})} \psi_{t(e)}^{d({t(e)})} \bigg) \Bigg)_g
		\\ \notag & \qquad \qquad 
		\times \prod_{i=1}^n \frac{a_i^{ 2d({\ell_i})}}{d({\ell_i})!} \cdot \frac{(-A-b)^{2d(\ell_{n+1})} }{d({\ell_{n+1}})!}\cdot 
		\prod_{e\in E(\Gamma)} \frac{(2d({h(e)})+2d({t(e)})+1)!}{d({h(e)})!d({t(e)})!}
		\\ \notag & \qquad \qquad 
		\times \mathrm{\Coeff}_{\left(\prod_{e\in E(\Gamma)} x_e^{2d({h(e)})+2d({t(e)})+2}\right)} 
		\sum_{T \in \mathrm{SpTr}(\Gamma)} \prod_{f\in E(T)} e^{a_{f,T} x_f} \prod_{f\not\in E(T)} \frac{x_f}{e^{x_{f,T}}-1}.
	\end{align}
	Since $d(\ell_{n+2})=0$, this push-forward gives a similar expression in terms of stable graphs decorated only by $\psi$-classes, with the coefficients $C_\Gamma (a_1,\dots,a_n,b)$ polynomial in $a_1,\dots,a_n,b$:
	\begin{align} \label{eq:DR-Pixton-Zagier-pushforward-result}
		& \pi_*DR_{g}(a_1,\dots,a_n,-A-b,b) = O(b^2) + \\ \notag
		& \frac 1{2^g} \sum_{\Gamma\in \mathrm{SG}_{g,n+1}} \frac{(-2)^{|E(\Gamma)|} }{|\mathrm{Aut}(\Gamma)|}\sum_{ \substack {d\colon H(\Gamma)  \to\ZZ_{\geq 0}} }
		\Bigg( (\mathsf{b}_\Gamma)_* \bigg(\prod_{i=1}^{n+1} \psi_{\ell_i}^{d({\ell_i})} \prod_{e\in E(\Gamma)} \psi_{h(e)}^{d({h(e)})} \psi_{t(e)}^{d({t(e)})} \bigg) \Bigg)_{g-1}
		\\ \notag & \qquad \qquad 
		\times C_\Gamma (a_1,\dots,a_n,b).
	\end{align}
	
	There are $n+1+|E(\Gamma)|+|V(\Gamma)|$ graphs $\Gamma'$ in $\mathrm{SG}_{g,n+2}$ that under the push-forward give $\Gamma$:
	\begin{enumerate}
		\item either we replace the leaf $\ell_i$ by a new edge $e_{\newe}$ that connects the vertex where $\ell_i$ was attached with a new vertex of genus $0$, where we further attach $\ell_i$ and $\ell_{n+2}$, $i=1,\dots,n+1$;
		\item or we replace an edge $e\in E(\Gamma)$ by two new edges $e_{\newe,1}$ and $e_{\newe,2}$ that are further attached to a vertex of genus $0$ with the leaf $\ell_{n+2}$;
		\item or we just attach the leaf $\ell_{n+2}$ to one of the vertices of $\Gamma$. 
	\end{enumerate}
	In all cases, a choice of orientation on the edges of $\Gamma$ can be canonically lifted to a choice of orientation of the edges of $\Gamma'$:
	\begin{enumerate}
		\item In the first case, the new edge $e_{\newe}$ is oriented toward the new vertex, that is, $t(e_{\newe})$ is naturally identified with $\ell_i$ in $\Gamma$ and $h(e_{\newe})$ is attached to the new vertex of genus $0$. All other edges of $\Gamma'$ are naturally identified with the edges in $\Gamma$ and inherit their orientation.
		\item In the second case, the two new edges $e_{\newe,1}$ and $e_{\newe,2}$ in $E(\Gamma')$ that replace the edge $e\in E(\Gamma)$ are oriented in the same direction as $e$, that is, $h(e_{\newe,1})$ is naturally identified with $h(e)$, $t(e_{\newe,2})$ is naturally identified with $t(e)$ and $t(e_{\newe,1})$ and $h(e_{\newe,2})$ are attached to the new vertex of genus $0$. All other edges of $\Gamma'$ are naturally identified with the edges in $\Gamma$ and inherit their orientation.
		\item In the third case, all edges of $\Gamma'$ are naturally identified with the edges in $\Gamma$ and inherit their orientation.
	\end{enumerate}
	For each $d\colon H(\Gamma) \to \ZZ_{\geq 0}$ there is a number of choices of possible functions $d'$ on $H(\Gamma')$:
	\begin{enumerate}
		\item In the first case, the only possible choice is $d'(t(e_{\newe})) = d(\ell_i)$ and $d'(h(e_{\newe})) = d'(\ell_i) = d'(\ell_{n+2}) = 0$. All other half-edges of $\Gamma'$ are naturally identified with the half-edges of $\Gamma$ and the value of $d'$ coincides with the corresponding value of $d$. 
		\item In the second case, the only possible choice is $d'(h(e_{\newe,1})) = d(h(e))$, $d'(h(e_{\newe,1})) = d(h(e))$, and $d'(t(e_{\newe,1})) = d'(h(e_{\newe,2})) = d'(\ell_{n+2})=0$. All other half-edges of $\Gamma'$ are naturally identified with the half-edges of $\Gamma$ and the value of $d'$ coincides with the corresponding value of $d$. 
		\item In the third case, there is a variety of choices indexed by the half-edges attached to the same vertex as $\ell_{n+2}$. We set $d'(h) = d(h)+1$ for one selected half-edge attached to the same vertex as $\ell_{n+2}$ and $d'(\ell_{n+2}) = 0$. 
		All other half-edges of $\Gamma'$ are naturally identified with the half-edges of $\Gamma$ and the value of $d'$ coincides with the corresponding value of $d$. 
	\end{enumerate}
	Finally, for each $T\in \mathrm{SpTr}(\Gamma)$ we can put in correspondence one or two choices of $T'\in \mathrm{SpTr}(\Gamma)$:
	\begin{enumerate}
		\item  In the first case, the only possible choice of $T'$ is obtained from $T$ by adding $e_{\newe}$ and the new vertex of genus $0$. 
		\item In the second case, if $e\in T$, the there is the only possible choice of $T'$ that is obtained by replacing $e$ with $e_{\newe,1}$, $e_{\newe,2}$, and the new vertex. 
		If $e\not\in T$, then there are two possible choices. We obtain $T'$ by adding to $T$ either $e_{\newe,1}$ or $e_{\newe, 2}$, and, in both cases, the new vertex. 
		\item In the third case $T'$ is naturally identified with $T$. 
	\end{enumerate}
	Thus, for every tuple $(\Gamma,d,T)$, where $\Gamma\in \mathrm{SG}_{g,n+1}$, $d\colon H(\Gamma)  \to\ZZ_{\geq 0}$, and $T \in \mathrm{SpTr}(\Gamma)$, we put in correspondence a finite number of choices $(\Gamma',d',T')$ with $\Gamma'\in \mathrm{SG}_{g,n+2}$, $d'\colon H(\Gamma')  \to\ZZ_{\geq 0}$, and $T' \in \mathrm{SpTr}(\Gamma')$. By this correspondence we obviously exhaust all choices of tuples $(\Gamma',d',T')$. This can be encoded as a surjective map $F\colon (\Gamma',d',T')\mapsto (\Gamma,d,T)$ from the set of all tuples $(\Gamma',d',T')$, where $\Gamma'\in \mathrm{SG}_{g,n+2}$, $d'\colon H(\Gamma')  \to\ZZ_{\geq 0}$, and $T' \in \mathrm{SpTr}(\Gamma')$ to the set of all tuples $(\Gamma,d,T)$, where $\Gamma\in \mathrm{SG}_{g,n+1}$, $d\colon H(\Gamma)  \to\ZZ_{\geq 0}$, and $T \in \mathrm{SpTr}(\Gamma)$.

	Therefore, we can  rewrite Equation~\eqref{eq:DR-Pixton-Zagier-pushforward-result} as
	\begin{align} \label{eq:DR-Pixton-Zagier-pushforward-result-with-Trees}
		& \pi_*DR_{g}(a_1,\dots,a_n,-A-b,b) = O(b^2) +  \\ \notag
		& \frac 1{2^g} \sum_{\substack{ \Gamma\in \mathrm{SG}_{g,n+1} \\ d\colon H(\Gamma)  \to\ZZ_{\geq 0} \\ T \in \mathrm{SpTr}(\Gamma) }} \frac{(-2)^{|E(\Gamma)|} }{|\mathrm{Aut}(\Gamma)|} C_{\Gamma,d,T} (a_1,\dots,a_n,b)
		\Bigg( (\mathsf{b}_\Gamma)_* \bigg(\prod_{i=1}^{n+1} \psi_{\ell_i}^{d({\ell_i})} \prod_{e\in E(\Gamma)} \psi_{h(e)}^{d({h(e)})} \psi_{t(e)}^{d({t(e)})} \bigg) \Bigg)_{g-1},
	\end{align}
	where the coefficient $C_{\Gamma,d,T} (a_1,\dots,a_n,b)$ is obtained as  
	\begin{align} \label{eq:Coeff-Gamma-d-T}
		& C_{\Gamma,d,T} (a_1,\dots,a_n,b) \coloneqq 
		\\ \notag 
		& \sum_{(\Gamma',d',T')\in F^{-1}(\Gamma,d,T)} \prod_{i=1}^n \frac{a_i^{ 2d'({\ell_i})}}{d'({\ell_i})!} \cdot \frac{(-A-b)^{2d'(\ell_{n+1})} }{d'({\ell_{n+1}})!}\cdot 
		\prod_{e\in E(\Gamma)} \frac{(2d'({h(e)})+2d'({t(e)})+1)!}{d'({h(e)})!d'({t(e)})!}
		\\ \notag & \qquad \qquad 
		\times (-2)^{|E(\Gamma')|-|E(\Gamma)|}\mathrm{\Coeff}_{\left(\prod_{e\in E(\Gamma')} x_e^{2d'({h(e)})+2d'({t(e)})+2}\right)} 
		\prod_{f\in E(T')} e^{a_{f,T'} x_f} \prod_{f\not\in E(T')} \frac{x_f}{e^{x_{f,T'}}-1}.
	\end{align} 
	By construction $C_{\Gamma,d,T} (a_1,\dots,a_n,b)$ is obviously a polynomial in $a_1,\dots,a_n,b$. Let us prove that 
	\begin{align}
			\sum_{T\in \mathrm{SpTr}(\Gamma)} C_{\Gamma,d,T} (a_1,\dots,a_n,b) = O(b^2).
	\end{align}
	In order to do this, we split the set $\cup_{T\in \mathrm{SpTr}(\Gamma)} F^{-1}(\Gamma,d,T)$ into subsets such that the sum 
	\begin{align}
	\sum_{T\in \mathrm{SpTr}(\Gamma)} C_{\Gamma,d,T} (a_1,\dots,a_n,b)
	\end{align}
	 (given as the sum of all $T\in \mathrm{SpTr}(\Gamma)$ of the sums over $(\Gamma',d',T')\in F^{-1}(\Gamma,d,T)$
	on the right hand side of Equation~\eqref{eq:Coeff-Gamma-d-T}) splits into the polynomials in $O(b^2)$.
	
	The first type of sets $X_{T,\ell}\subseteq F^{-1}(\Gamma,d,T)$ is indexed by spanning trees $\sum_{T\in \mathrm{SpTr}(\Gamma)}$. The set $X_{T,\ell}$ consists of triples $(\Gamma',d',T')$ where either one of $d_i$ is increased by $1$ and the leave $\ell_{n+2}$ is attached to the same vertex as the leaf $\ell_i$ (these are graphs of type (3), but not all of them) or the graphs of type (1), that is, with an extra vertex of genus $0$ with the leaves $\ell_i$ and $\ell_{n+2}$ attached to this extra vertex, $i=1,\dots,n+1$. In this case let 
\begin{align} 
	& C_{X_{T,\ell}} (a_1,\dots,a_n,b) \coloneqq \\ \notag 
	& \sum_{(\Gamma',d',T')\in X_1} \prod_{i=1}^n \frac{a_i^{ 2d'({\ell_i})}}{d'({\ell_i})!} \cdot \frac{(-A-b)^{2d'(\ell_{n+1})} }{d'({\ell_{n+1}})!}\cdot 
	\prod_{e\in E(\Gamma)} \frac{(2d'({h(e)})+2d'({t(e)})+1)!}{d'({h(e)})!d'({t(e)})!}
	\\ \notag & \qquad \qquad 
	\times \mathrm{\Coeff}_{\left(\prod_{e\in E(\Gamma')} x_e^{2d'({h(e)})+2d'({t(e)})+2}\right)} 
	\prod_{f\in E(T')} e^{a_{f,T'} x_f} \prod_{f\not\in E(T')} \frac{x_f}{e^{x_{f,T'}}-1}
	\\ \notag 
	& = \sum_{i=1}^n \frac{a_i^2}{d(\ell_i)+1} \prod_{j=1}^n \frac{a_j^{ 2d({\ell_j})}}{d({\ell_j})!} \cdot \frac{(-A-b)^{2d(\ell_{n+1})} }{d({\ell_{n+1}})!}\cdot 
	\prod_{e\in E(\Gamma)} \frac{(2d({h(e)})+2d({t(e)})+1)!}{d({h(e)})!d({t(e)})!}
	\\ \notag & \qquad \qquad 
	\times \mathrm{\Coeff}_{\left(\prod_{e\in E(\Gamma)} x_e^{2d({h(e)})+2d({t(e)})+2}\right)} 
	\prod_{f\in E(T)} e^{(a_{f,T} + \delta_{f\uparrow \ell_i} b)x_f} \prod_{f\not\in E(T)} \frac{x_f}{e^{x_{f,T}}-1}
	\\ \notag 
	& \quad -2 \sum_{i=1}^n \prod_{j\ne i} \frac{a_j^{ 2d({\ell_j})}}{d({\ell_j})!} \cdot \frac{(-A-b)^{2d(\ell_{n+1})} }{d({\ell_{n+1}})!}\cdot 
	\prod_{e\in E(\Gamma)} \frac{(2d({h(e)})+2d({t(e)})+1)!}{d({h(e)})!d({t(e)})!} \cdot \frac{(2 d(\ell_i) +1)!}{d(\ell_i)!}
	\\ \notag & \qquad \qquad 
	\times \mathrm{\Coeff}_{ x_{e_{\newe}}^{2d(\ell_i)+2}
	\left(\prod_{e\in E(\Gamma)} x_e^{2d({h(e)})+2d({t(e)})+2}\right)}  e^{(a_i+b) x_{e_{\newe}} }
	\prod_{f\in E(T)} e^{(a_{f,T} + \delta_{f\uparrow \ell_i} b) x_f} \prod_{f\not\in E(T)} \frac{x_f}{e^{x_{f,T}}-1}
 	\\ \notag & \quad 
 +  \frac{(A+b)^2}{d(\ell_{n+1})+1} \prod_{j=1}^n \frac{a_j^{ 2d({\ell_j})}}{d({\ell_j})!} \cdot \frac{(-A-b)^{2d(\ell_{n+1})} }{d({\ell_{n+1}})!}\cdot 
\prod_{e\in E(\Gamma)} \frac{(2d({h(e)})+2d({t(e)})+1)!}{d({h(e)})!d({t(e)})!}
\\ \notag & \qquad \qquad 
\times \mathrm{\Coeff}_{\left(\prod_{e\in E(\Gamma)} x_e^{2d({h(e)})+2d({t(e)})+2}\right)} 
\prod_{f\in E(T)} e^{(a_{f,T}+ \delta_{f\uparrow \ell_{n+1}} b) x_f} \prod_{f\not\in E(T)} \frac{x_f}{e^{x_{f,T}}-1}
	\\ \notag & \quad -2 \prod_{j=1}^n \frac{a_j^{ 2d({\ell_j})}}{d({\ell_j})!}  \cdot 
	\prod_{e\in E(\Gamma)} \frac{(2d({h(e)})+2d({t(e)})+1)!}{d({h(e)})!d({t(e)})!} \cdot \frac{(2 d(\ell_i) +1)!}{d(\ell_i)!}
	\\ \notag & \qquad \qquad 
	\times \mathrm{\Coeff}_{ x_{e_{\newe}}^{2d(\ell_i)+2}
		\left(\prod_{e\in E(\Gamma)} x_e^{2d({h(e)})+2d({t(e)})+2}\right)}  e^{(-A) x_{e_{\newe}} }
	\prod_{f\in E(T)} e^{(a_{f,T}+ \delta_{f\uparrow \ell_{n+1}} b) x_f}  \prod_{f\not\in E(T)} \frac{x_f}{e^{x_{f,T}}-1},
\end{align} 
where $\delta_{f\uparrow \ell_i}$ is equal to $1$ if the vertex to which $\ell_i$ is attached is ahead of $f$ in $T$ and $0$ otherwise. The terms in the latter expression can be combined in pairs to give 
\begin{align} 
	& \quad \sum_{i=1}^n \frac{a_i^{2d(\ell_i)+2}-(a_i+b)^{2d(\ell_i)+2}}{(d(\ell_i)+1)!} \prod_{j\ne i} \frac{a_j^{ 2d({\ell_j})}}{d({\ell_j})!} \cdot \frac{(-A-b)^{2d(\ell_{n+1})} }{d({\ell_{n+1}})!}\cdot 
	\prod_{e\in E(\Gamma)} \frac{(2d({h(e)})+2d({t(e)})+1)!}{d({h(e)})!d({t(e)})!}
	\\ \notag & \qquad \qquad 
	\times \mathrm{\Coeff}_{\left(\prod_{e\in E(\Gamma)} x_e^{2d({h(e)})+2d({t(e)})+2}\right)} 
	\prod_{f\in E(T)} e^{(a_{f,T} + \delta_{f\uparrow \ell_i} b)x_f} \prod_{f\not\in E(T)} \frac{x_f}{e^{x_{f,T}}-1}
	\\ \notag & \quad 
	+  \frac{(A+b)^{2d(\ell_{n+1})+2}-A^{2d(\ell_{n+1})+2}}{(d(\ell_{n+1})+1)!} \prod_{j=1}^n \frac{a_j^{ 2d({\ell_j})}}{d({\ell_j})!} \cdot 
	\prod_{e\in E(\Gamma)} \frac{(2d({h(e)})+2d({t(e)})+1)!}{d({h(e)})!d({t(e)})!}
	\\ \notag & \qquad \qquad 
	\times \mathrm{\Coeff}_{\left(\prod_{e\in E(\Gamma)} x_e^{2d({h(e)})+2d({t(e)})+2}\right)} 
	\prod_{f\in E(T)} e^{(a_{f,T}+ \delta_{f\uparrow \ell_{n+1}} b) x_f} \prod_{f\not\in E(T)} \frac{x_f}{e^{x_{f,T}}-1}.
\end{align} 
The latter expression is equal mod $O(b^2)$ terms to
\begin{align} 
	& \quad \sum_{i=1}^n (-2ba_i) \prod_{j=1}^n \frac{a_j^{ 2d({\ell_j})}}{d({\ell_j})!} \cdot \frac{(-A-b)^{2d(\ell_{n+1})} }{d({\ell_{n+1}})!}\cdot 
	\prod_{e\in E(\Gamma)} \frac{(2d({h(e)})+2d({t(e)})+1)!}{d({h(e)})!d({t(e)})!}
	\\ \notag & \qquad \qquad 
	\times \mathrm{\Coeff}_{\left(\prod_{e\in E(\Gamma)} x_e^{2d({h(e)})+2d({t(e)})+2}\right)} 
	\prod_{f\in E(T)} e^{(a_{f,T} + \delta_{f\uparrow \ell_i} b)x_f} \prod_{f\not\in E(T)} \frac{x_f}{e^{x_{f,T}}-1}
	\\ \notag & \quad 
	+  2bA \prod_{j=1}^n \frac{a_j^{ 2d({\ell_j})}}{d({\ell_j})!} \cdot \frac{(-A-b)^{2d(\ell_{n+1})} }{d({\ell_{n+1}})!} \cdot 
	\prod_{e\in E(\Gamma)} \frac{(2d({h(e)})+2d({t(e)})+1)!}{d({h(e)})!d({t(e)})!}
	\\ \notag & \qquad \qquad 
	\times \mathrm{\Coeff}_{\left(\prod_{e\in E(\Gamma)} x_e^{2d({h(e)})+2d({t(e)})+2}\right)} 
	\prod_{f\in E(T)} e^{(a_{f,T}+ \delta_{f\uparrow \ell_{n+1}} b) x_f} \prod_{f\not\in E(T)} \frac{x_f}{e^{x_{f,T}}-1}.
\end{align} 
Now, since all terms are proportional to $b$, and we want to consider this expression mod $O(b^2)$ terms, we can drop the summands $\delta_{f\uparrow \ell_{i}} b$, $i=1,\dots,n+1$, in the exponents. Thus, we obtain that the coefficient $C_{X_{T,\ell}} (a_1,\dots,a_n,b)$ is equal to 
\begin{align}
& C_{X_{T,\ell}} (a_1,\dots,a_n,b)  \\ \notag & 
= \left(\sum_{i=1}^n (-2ba_i) +2bA \right)\prod_{j=1}^n \frac{a_j^{ 2d({\ell_j})}}{d({\ell_j})!} \cdot \frac{(-A-b)^{2d(\ell_{n+1})} }{d({\ell_{n+1}})!}\cdot 
\prod_{e\in E(\Gamma)} \frac{(2d({h(e)})+2d({t(e)})+1)!}{d({h(e)})!d({t(e)})!}
\\ \notag & \qquad \qquad 
\times \mathrm{\Coeff}_{\left(\prod_{e\in E(\Gamma)} x_e^{2d({h(e)})+2d({t(e)})+2}\right)} 
\prod_{f\in E(T)} e^{a_{f,T} x_f} \prod_{f\not\in E(T)} \frac{x_f}{e^{x_{f,T}}-1} + O(b^2)
\\ \notag & = O(b^2). 
\end{align}

The second type of sets is indexed by pairs $(T\in \mathrm{SpTr}(\Gamma), e\in T)$. We denote these sets by $X_{T,e\in T}$. They consist of one graph of type (2), where the edge $e$ is replaced by $e_{\newe,1}$ and $e_{\newe,2}$ (and this fixes all the data of $(\Gamma',d',T')$ uniquely) and two graphs of type (3), where either the leaf $\ell_{n+2}$ is attached to the same vertex as $h(e)$, and then $d'(h(e))\coloneqq d(h(e))+1$ or the $\ell_{n+2}$ is attached to the same vertex as $t(e)$, and then $d'(t(e))\coloneqq d(t(e))+1$. In this case,
\begin{align}  \label{eq:Coeff-T-e-in-T-details}
	& C_{X_{T,e\in T}} (a_1,\dots,a_n,b) \coloneqq \\ \notag 
	& \sum_{(\Gamma',d',T')\in X_{T,e\in T}} \prod_{i=1}^n \frac{a_i^{ 2d'({\ell_i})}}{d'({\ell_i})!} \cdot \frac{(-A-b)^{2d'(\ell_{n+1})} }{d'({\ell_{n+1}})!}\cdot 
	\prod_{s\in E(\Gamma)} \frac{(2d'({h(s)})+2d'({t(s)})+1)!}{d'({h(s)})!d'({t(s)})!}
	\\ \notag & \qquad \qquad 
	\times \mathrm{\Coeff}_{\left(\prod_{s\in E(\Gamma')} x_s^{2d'({h(s)})+2d'({t(s)})+2}\right)} 
	\prod_{f\in E(T')} e^{a_{f,T'} x_f} \prod_{f\not\in E(T')} \frac{x_f}{e^{x_{f,T'}}-1}
	\\ \notag & 
	= \prod_{i=1}^n \frac{a_i^{ 2d({\ell_i})}}{d({\ell_i})!} \cdot \frac{(-A-b)^{2d(\ell_{n+1})} }{d({\ell_{n+1}})!}\cdot 
	\prod_{s\in E(\Gamma)\setminus \{e\}} \frac{(2d({h(s)})+2d({t(s)})+1)!}{d({h(s)})!d({t(s)})!} \times
	\\ \notag & \qquad
	\Bigg( -2\cdot \frac{(2d({h(e)})+1)!(2d({t(e)})+1)!}{d({h(e)})!d({t(s)})!}
	\cdot\mathrm{\Coeff}_{\left( x_{e_{\newe,1}}^{2d(h(e))+2} x_{e_{\newe,2}}^{2d(t(e))+2} \prod_{s\in E(\Gamma)\setminus \{e\}} x_s^{2d({h(s)})+2d({t(s)})+2} \right)} 
	\\ \notag & \qquad \qquad e^{a_{e,T} x_{e_{\newe,1}} +(a_{e,T}+b) x_{e_{\newe,2}}} 
	\prod_{f\in E(T)\setminus \{e\}} e^{(a_{f,T} + \delta_{f\uparrow e} b) x_f} \prod_{f\not\in E(T)} \frac{x_f}{e^{x_{f,T}}-1}\big\vert_{x_e\to x_{e_{\newe,1}}+  x_{e_{\newe,2}} }
	\\ \notag & \qquad 
	+ \frac{(2d({h(e)})+2d({t(e)})+3)!}{(d({h(e)})+1)!d({t(s)})!}
	\cdot\mathrm{\Coeff}_{\left( x_{e}^{2d(h(e))+2d(t(e))+4} \prod_{s\in E(\Gamma)\setminus \{e\}} x_s^{2d({h(s)})+2d({t(s)})+2} \right)} 
	\\ \notag & \qquad \qquad e^{(a_{e,T}+b) x_{e}} 
	\prod_{f\in E(T)\setminus \{e\}} e^{(a_{f,T} + \delta_{f\uparrow e} b) x_f} \prod_{f\not\in E(T)} \frac{x_f}{e^{x_{f,T}}-1}
	\\ \notag & \qquad 
	+ \frac{(2d({h(e)})+2d({t(e)})+3)!}{d({h(e)})!(d({t(s)})+1)!}
	\cdot\mathrm{\Coeff}_{\left( x_{e}^{2d(h(e))+2d(t(e))+4} \prod_{s\in E(\Gamma)\setminus \{e\}} x_s^{2d({h(s)})+2d({t(s)})+2} \right)} 
	\\ \notag & \qquad \qquad e^{a_{e,T} x_{e}} 
	\prod_{f\in E(T)\setminus \{e\}} e^{(a_{f,T} + \delta_{f\uparrow e} b) x_f} \prod_{f\not\in E(T)} \frac{x_f}{e^{x_{f,T}}-1} \Bigg).
\end{align}
We are going to analyze only the expression in the parentheses, up to the $O(b^2)$ terms. Note that 
\begin{align}  \label{eq:Obs1}
\mathrm{\Coeff}_{x_1^{d_1}x_2^{d_2}} f(x_1+x_2) = \frac{(d_1+d_2)!}{d_1!d_2!} \mathrm{\Coeff}_{x^{d_1 + d_2}} f(x).
\end{align} 
Note also that 
\begin{align} \label{eq:Obs2}
	\Coeff_{x^d} e^{(a+b)x} f(x) = \Coeff_{x^d} e^{ax} f(x) + b \Coeff_{x^{d-1}} e^{ax} f(x) + O(b^2).
\end{align}
Combining these two observations, the expression in parentheses on the right hand side of~\eqref{eq:Coeff-T-e-in-T-details} is equal to
\begin{align}
& 
\bigg( -2\cdot \frac{(2d({h(e)})+1)!(2d({t(e)})+1)!}{d({h(e)})!d({t(s)})!} \frac{(2d(h(e))+2d(t(e))+4)!}{(2d({h(e)})+2)!(2d({t(e)})+2)!}
\\ \notag & \qquad \qquad +  \frac{(2d({h(e)})+2d({t(e)})+3)!}{(d({h(e)})+1)!d({t(s)})!} + \frac{(2d({h(e)})+2d({t(e)})+3)!}{d({h(e)})!(d({t(s)})+1)!} \bigg)
\\ \notag & \qquad  \times 
\mathrm{\Coeff}_{\left( x_{e}^{2} \prod_{s\in E(\Gamma)} x_s^{2d({h(s)})+2d({t(s)})+2} \right)} 
e^{a_{e,T} x_{e}} \prod_{f\in E(T)\setminus \{e\}} e^{(a_{f,T} + \delta_{f\uparrow e} b) x_f} \prod_{f\not\in E(T)} \frac{x_f}{e^{x_{f,T}}-1} \Bigg)
\\ \notag &
+ b\cdot \bigg( -2\cdot \frac{(2d({h(e)})+1)!(2d({t(e)})+1)!}{d({h(e)})!d({t(s)})!} \frac{(2d(h(e))+2d(t(e))+3)!}{(2d({h(e)})+2)!(2d({t(e)})+1)!}  
\\ \notag & \qquad \qquad 
+ \frac{(2d({h(e)})+2d({t(e)})+3)!}{(d({h(e)})+1)!d({t(s)})!} \bigg)
\\ \notag & \qquad  \times 
\mathrm{\Coeff}_{\left( x_{e} \prod_{s\in E(\Gamma)} x_s^{2d({h(s)})+2d({t(s)})+2} \right)} 
e^{a_{e,T} x_{e}} \prod_{f\in E(T)\setminus \{e\}} e^{(a_{f,T} + \delta_{f\uparrow e} b) x_f} \prod_{f\not\in E(T)} \frac{x_f}{e^{x_{f,T}}-1} \Bigg) + O(b^2)
\\ \notag & = O(b^2).
\end{align} 
Thus $C_{X_{T,e\in T}} (a_1,\dots,a_n,b) = O(b^2)$.

Finally, the third type of sets is indexed by pairs $(T\in \mathrm{SpTr}(\Gamma), e\not\in T)$. We denote these sets by $X_{T,e\not\in T}$. They consist of two graphs of type (2), where the edge $e$ is replaces by $e_{\newe,1}$ and $e_{\newe,2}$ --- this fixes $d'$ uniquely, but there $T'$ can contain either $e_{\newe,1}$ or  $e_{\newe,2}$, and this gives two possible choices ---  and two graphs of type (3), where either the leaf $\ell_{n+2}$ is attached to the same vertex as $h(e)$, and then $d'(h(e))\coloneqq d(h(e))+1$ or the $\ell_{n+2}$ is attached to the same vertex as $t(e)$, and then $d'(t(e))\coloneqq d(t(e))+1$. Thus we have four graphs in total, and 
\begin{align}  \label{eq:Coeff-T-e-in-T}
	& C_{X_{T,e\not\in T}} (a_1,\dots,a_n,b) \coloneqq \\ \notag 
	& \sum_{(\Gamma',d',T')\in X_{T,e\not\in T}} \prod_{i=1}^n \frac{a_i^{ 2d'({\ell_i})}}{d'({\ell_i})!} \cdot \frac{(-A-b)^{2d'(\ell_{n+1})} }{d'({\ell_{n+1}})!}\cdot 
	\prod_{s\in E(\Gamma)} \frac{(2d'({h(s)})+2d'({t(s)})+1)!}{d'({h(s)})!d'({t(s)})!}
	\\ \notag & \qquad \qquad 
	\times \mathrm{\Coeff}_{\left(\prod_{s\in E(\Gamma')} x_s^{2d'({h(s)})+2d'({t(s)})+2}\right)} 
	\prod_{f\in E(T')} e^{a_{f,T'} x_f} \prod_{f\not\in E(T')} \frac{x_f}{e^{x_{f,T'}}-1}
	\\ \notag & 
	= \prod_{i=1}^n \frac{a_i^{ 2d({\ell_i})}}{d({\ell_i})!} \cdot \frac{(-A-b)^{2d(\ell_{n+1})} }{d({\ell_{n+1}})!}\cdot 
	\prod_{s\in E(\Gamma)\setminus \{e\}} \frac{(2d({h(s)})+2d({t(s)})+1)!}{d({h(s)})!d({t(s)})!} \times
	\\ \notag & \qquad 
	\Bigg( -2\cdot \frac{(2d({h(e)})+1)!(2d({t(e)})+1)!}{d({h(e)})!d({t(s)})!}
	\cdot\mathrm{\Coeff}_{\left( x_{e_{\newe,1}}^{2d(h(e))+2} x_{e_{\newe,2}}^{2d(t(e))+2} \prod_{s\in E(\Gamma)\setminus \{e\}} x_s^{2d({h(s)})+2d({t(s)})+2} \right)} 
	\\ \notag & \qquad \qquad e^{-b x_{e_{\newe,1}} } 
	\prod_{f\in E(T)} e^{(a_{f,T} + \delta_{f\uparrow h(e)} b) x_f}  \frac{x_{e_{\newe,2}}}{\big(e^{x_{e,T}}|_{x_e\to x_{e_{\newe,1}}+  x_{e_{\newe,2}} }\big) -1 } \prod_{f\not\in E(T)\cup \{e\}} \frac{x_f}{e^{x_{f,T}}-1}
	\\ \notag & \qquad 
	-2\cdot \frac{(2d({h(e)})+1)!(2d({t(e)})+1)!}{d({h(e)})!d({t(s)})!}
	\cdot\mathrm{\Coeff}_{\left( x_{e_{\newe,1}}^{2d(h(e))+2} x_{e_{\newe,2}}^{2d(t(e))+2} \prod_{s\in E(\Gamma)\setminus \{e\}} x_s^{2d({h(s)})+2d({t(s)})+2} \right)} 
	\\ \notag & \qquad \qquad e^{b x_{e_{\newe,2}} } 
	\prod_{f\in E(T)} e^{(a_{f,T} + \delta_{f\uparrow t(e)} b) x_f}  \frac{x_{e_{\newe,1}}}{\big(e^{x_{e,T}}|_{x_e\to x_{e_{\newe,1}}+  x_{e_{\newe,2}} }\big) -1 } \prod_{f\not\in E(T)\cup \{e\}} \frac{x_f}{e^{x_{f,T}}-1}
	\\ \notag & \qquad 
	+ \frac{(2d({h(e)})+2d({t(e)})+3)!}{(d({h(e)})+1)!d({t(s)})!}
	\cdot\mathrm{\Coeff}_{\left( x_{e}^{2} \prod_{s\in E(\Gamma)} x_s^{2d({h(s)})+2d({t(s)})+2} \right)} 
	\\ \notag & \qquad \qquad 
	\prod_{\substack {f\in E(T)
	}} e^{(a_{f,T} + \delta_{f\uparrow h(e)} b) x_f} \prod_{\substack {f\not\in E(T)}} \frac{x_f}{e^{x_{f,T}}-1}
	\\ \notag & \qquad 
	+ \frac{(2d({h(e)})+2d({t(e)})+3)!}{d({h(e)})!(d({t(s)})+1)!}
	\cdot\mathrm{\Coeff}_{\left( x_{e}^{2} \prod_{s\in E(\Gamma)} x_s^{2d({h(s)})+2d({t(s)})+2} \right)} 
	\\ \notag & \qquad \qquad 
	\prod_{\substack {f\in E(T)
	}} e^{(a_{f,T} + \delta_{f\uparrow t(e)} b) x_f} \prod_{\substack {f\not\in E(T)}} \frac{x_f}{e^{x_{f,T}}-1} \Bigg).
\end{align}
Using the observations~\eqref{eq:Obs1},~\eqref{eq:Obs2}, we see that the expression in parentheses on the right hand side of this equation is equal to
\begin{align}
	&
	b^0\cdot \Bigg( -2\cdot \frac{(2d({h(e)})+1)!(2d({t(e)})+1)!}{d({h(e)})!d({t(s)})!}
	\cdot\mathrm{\Coeff}_{\left( x_{e_{\newe,1}}^{2d(h(e))+2} x_{e_{\newe,2}}^{2d(t(e))+1} \prod_{s\in E(\Gamma)\setminus \{e\}} x_s^{2d({h(s)})+2d({t(s)})+2} \right)} 
	\\ \notag & \qquad \qquad 
	\prod_{f\in E(T)} e^{(a_{f,T} + \delta_{f\uparrow h(e)} b) x_f}  \frac{1}{\big(e^{x_{e,T}}|_{x_e\to x_{e_{\newe,1}}+  x_{e_{\newe,2}} }\big) -1 } \prod_{f\not\in E(T)\cup \{e\}} \frac{x_f}{e^{x_{f,T}}-1}
	\\ \notag & \qquad 
	-2\cdot \frac{(2d({h(e)})+1)!(2d({t(e)})+1)!}{d({h(e)})!d({t(s)})!}
	\cdot\mathrm{\Coeff}_{\left( x_{e_{\newe,1}}^{2d(h(e))+1} x_{e_{\newe,2}}^{2d(t(e))+2} \prod_{s\in E(\Gamma)\setminus \{e\}} x_s^{2d({h(s)})+2d({t(s)})+2} \right)} 
	\\ \notag & \qquad \qquad 
	\prod_{f\in E(T)} e^{(a_{f,T} + \delta_{f\uparrow h(e)} b) x_f}  \frac{1}{\big(e^{x_{e,T}}|_{x_e\to x_{e_{\newe,1}}+  x_{e_{\newe,2}} }\big) -1 } \prod_{f\not\in E(T)\cup \{e\}} \frac{x_f}{e^{x_{f,T}}-1}
	\\ \notag & \qquad 
	+ \frac{(2d({h(e)})+2d({t(e)})+3)!}{(d({h(e)})+1)!d({t(s)})!}
	\cdot\mathrm{\Coeff}_{\left( x_{e}^{1} \prod_{s\in E(\Gamma)} x_s^{2d({h(s)})+2d({t(s)})+2} \right)} 
	\\ \notag & \qquad \qquad 
	\prod_{\substack {f\in E(T)
	}} e^{(a_{f,T} + \delta_{f\uparrow h(e)} b) x_f} \frac{1}{e^{x_{e,T}}  -1 } \prod_{\substack {f\not\in E(T)\cup \{e\}}} \frac{x_f}{e^{x_{f,T}}-1}
	\\ \notag & \qquad 
	+ \frac{(2d({h(e)})+2d({t(e)})+3)!}{d({h(e)})!(d({t(s)})+1)!}
	\cdot\mathrm{\Coeff}_{\left( x_{e}^{1} \prod_{s\in E(\Gamma)} x_s^{2d({h(s)})+2d({t(s)})+2} \right)} 
	\\ \notag & \qquad \qquad 
	\prod_{\substack {f\in E(T)
	}} e^{(a_{f,T} + \delta_{f\uparrow t(e)} b) x_f} \frac{1}{e^{x_{e,T}}  -1 } \prod_{\substack {f\not\in E(T)\cup \{e\}}} \frac{x_f}{e^{x_{f,T}}-1} \Bigg)
\\ \notag &
+ b^1 \cdot 
\Bigg( 2\cdot \frac{(2d({h(e)})+1)!(2d({t(e)})+1)!}{d({h(e)})!d({t(s)})!}
\cdot\mathrm{\Coeff}_{\left( x_{e_{\newe,1}}^{2d(h(e))+1} x_{e_{\newe,2}}^{2d(t(e))+1} \prod_{s\in E(\Gamma)\setminus \{e\}} x_s^{2d({h(s)})+2d({t(s)})+2} \right)} 
\\ \notag & \qquad \qquad 
\prod_{f\in E(T)} e^{(a_{f,T} + \delta_{f\uparrow h(e)} b) x_f}  \frac{1}{\big(e^{x_{e,T}}|_{x_e\to x_{e_{\newe,1}}+  x_{e_{\newe,2}} }\big) -1 } \prod_{f\not\in E(T)\cup \{e\}} \frac{x_f}{e^{x_{f,T}}-1}
\\ \notag & \qquad 
-2\cdot \frac{(2d({h(e)})+1)!(2d({t(e)})+1)!}{d({h(e)})!d({t(s)})!}
\cdot\mathrm{\Coeff}_{\left( x_{e_{\newe,1}}^{2d(h(e))+1} x_{e_{\newe,2}}^{2d(t(e))+1} \prod_{s\in E(\Gamma)\setminus \{e\}} x_s^{2d({h(s)})+2d({t(s)})+2} \right)} 
\\ \notag & \qquad \qquad 
\prod_{f\in E(T)} e^{(a_{f,T} + \delta_{f\uparrow t(e)} b) x_f}  \frac{1}{\big(e^{x_{e,T}}|_{x_e\to x_{e_{\newe,1}}+  x_{e_{\newe,2}} }\big) -1 } \prod_{f\not\in E(T)\cup \{e\}} \frac{x_f}{e^{x_{f,T}}-1}
\Bigg) + O(b^2)
\end{align}
Here the first summand is manifestly equal to zero, and the second summand can be rewritten as 
\begin{align}
& b^1 \cdot 
\Bigg( 2\cdot \frac{(2d({h(e)})+1)!(2d({t(e)})+1)!}{d({h(e)})!d({t(s)})!}
\cdot\mathrm{\Coeff}_{\left( x_{e_{\newe,1}}^{2d(h(e))+1} x_{e_{\newe,2}}^{2d(t(e))+1} \prod_{s\in E(\Gamma)\setminus \{e\}} x_s^{2d({h(s)})+2d({t(s)})+2} \right)} 
\\ \notag & \qquad \qquad 
\prod_{f\in E(T)} e^{(a_{f,T} + \delta_{f\uparrow h(e)} b) x_f}  \frac{1}{\big(e^{x_{e,T}}|_{x_e\to x_{e_{\newe,1}}+  x_{e_{\newe,2}} }\big) -1 } \prod_{f\not\in E(T)\cup \{e\}} \frac{x_f}{e^{x_{f,T}}-1}
\\ \notag & \qquad 
-2\cdot \frac{(2d({h(e)})+1)!(2d({t(e)})+1)!}{d({h(e)})!d({t(s)})!}
\cdot\mathrm{\Coeff}_{\left( x_{e_{\newe,1}}^{2d(h(e))+1} x_{e_{\newe,2}}^{2d(t(e))+1} \prod_{s\in E(\Gamma)\setminus \{e\}} x_s^{2d({h(s)})+2d({t(s)})+2} \right)} 
\\ \notag & \qquad \qquad 
\prod_{f\in E(T)} e^{(a_{f,T} + \delta_{f\uparrow t(e)} b) x_f}  \frac{1}{\big(e^{x_{e,T}}|_{x_e\to x_{e_{\newe,1}}+  x_{e_{\newe,2}} }\big) -1 } \prod_{f\not\in E(T)\cup \{e\}} \frac{x_f}{e^{x_{f,T}}-1}
\Bigg)
\\ \notag 
& =  b^1 \cdot 
\Bigg( 2\cdot \frac{(2d({h(e)})+1)!(2d({t(e)})+1)!}{d({h(e)})!d({t(s)})!}
\cdot\mathrm{\Coeff}_{\left( x_{e_{\newe,1}}^{2d(h(e))+1} x_{e_{\newe,2}}^{2d(t(e))+1} \prod_{s\in E(\Gamma)\setminus \{e\}} x_s^{2d({h(s)})+2d({t(s)})+2} \right)} 
\\ \notag & \qquad \qquad 
\prod_{f\in E(T)} e^{a_{f,T}  x_f}  \frac{1}{\big(e^{x_{e,T}}|_{x_e\to x_{e_{\newe,1}}+  x_{e_{\newe,2}} }\big) -1 } \prod_{f\not\in E(T)\cup \{e\}} \frac{x_f}{e^{x_{f,T}}-1}
\\ \notag & \qquad 
-2\cdot \frac{(2d({h(e)})+1)!(2d({t(e)})+1)!}{d({h(e)})!d({t(s)})!}
\cdot\mathrm{\Coeff}_{\left( x_{e_{\newe,1}}^{2d(h(e))+1} x_{e_{\newe,2}}^{2d(t(e))+1} \prod_{s\in E(\Gamma)\setminus \{e\}} x_s^{2d({h(s)})+2d({t(s)})+2} \right)} 
\\ \notag & \qquad \qquad 
\prod_{f\in E(T)} e^{a_{f,T} x_f}  \frac{1}{\big(e^{x_{e,T}}|_{x_e\to x_{e_{\newe,1}}+  x_{e_{\newe,2}} }\big) -1 } \prod_{f\not\in E(T)\cup \{e\}} \frac{x_f}{e^{x_{f,T}}-1}
\Bigg) + O(b^2)
\\ \notag &
= O(b^2).
\end{align}
Thus $C_{X_{T,e\not\in T}} (a_1,\dots,a_n,b) = O(b^2)$.

Now we notice that $\cup_{T\in \mathrm{SpTr}(\Gamma)}F^{-1}(\Gamma,d,T) = \cup_{T\in \mathrm{SpTr}(\Gamma)} \big(X_{T,\ell} \cup_{e\in E(T)} X_{T,e\in E(T)}\cup_{e\in E(\Gamma)\setminus E(T)} X_{T,e\not\in E(T)} \big)$. Thus Equation~\eqref{eq:DR-Pixton-Zagier-pushforward-result-with-Trees} can be rearranged as 
	\begin{align} 
	& \pi_*DR_{g}(a_1,\dots,a_n,-A-b,b) = O(b^2) +  \\ \notag
	& \frac 1{2^g} \sum_{\substack{ \Gamma\in \mathrm{SG}_{g,n+1} \\ d\colon H(\Gamma)  \to\ZZ_{\geq 0} }} \frac{(-2)^{|E(\Gamma)|} }{|\mathrm{Aut}(\Gamma)|} 
	\Bigg( (\mathsf{b}_\Gamma)_* \bigg(\prod_{i=1}^{n+1} \psi_{\ell_i}^{d({\ell_i})} \prod_{e\in E(\Gamma)} \psi_{h(e)}^{d({h(e)})} \psi_{t(e)}^{d({t(e)})} \bigg) \Bigg)_{g-1}
	\\ \notag &
	\times\sum_{T \in \mathrm{SpTr}(\Gamma)} \bigg(C_{X_{T,\ell}} (a_1,\dots,a_n,b) +\sum_{e\in E(T)} C_{X_{T,e\in T}} (a_1,\dots,a_n,b)+ \sum_{e\in E(\Gamma)\setminus E(T)} C_{X_{T,e\not\in T}} (a_1,\dots,a_n,b)\bigg),
\end{align}
where all coefficients $C_{X_{T,\bullet}}$ are polynomials that belong to the ideal $(b^2)\subset \QQ[a_1,\dots,a_n,b]$. This completes the proof of the proposition.

\bigskip

\section{Proof of the quantum A-class representation for the correlators}
\label{sec:proof-A-class}
In this section we prove \Cref{thm: quantum A class}. In the first step (\Cref{sec:proof-string}), we prove the string equation. Therefore, it is sufficient to prove \Cref{thm: quantum A class} for correlators with a $\tau_{0,1}$-insertion. In the second step (\Cref{sec:geo-fomula}), we give a first geometric formula for these correlators. In the third step (\Cref{sec:simplification-push-formula}), we simplify this formula using the push-forward property of the DR cycles. In the final step (\Cref{sec:final-relation}), we demonstrate how the quantum $A$-class representation for these correlators arises from a relation that we establish.

Our proofs generalise the argument of \cite{BDGR1,BDGR20,DRDZ} establishing the $A$-class representation for the correlators in the classical setting. Notice that, our final step even  simplifies the argument presented in \emph{op. cit.}  

\medskip

\subsection{Step 1: the string equation}
\label{sec:proof-string}


In this section we prove the string equation (\Cref{eq:STRING} of \Cref{thm:StringANDDilaton}). Equivalently we show that, whenever $2g-2+n>0$, we have the relation
\begin{align}
\left\langle \tau_{0,1}\tau_{d_{1},\alpha_{1}}\cdots\tau_{d_{n},\alpha_{n}}\right\rangle _{l,g-l;k} & =\sum_{i=1}^{n}\left\langle \tau_{d_{1},\alpha_{1}}\cdots\tau_{d_{i}-1,\alpha_{i}}\cdots\tau_{d_{n},\alpha_{n}}\right\rangle _{l,g-l;k}\label{eq: String induction}
\end{align}
with the convention that a correlator vanishes if $\tau$ has a negative index. Moreover the base cases are given by
\begin{align}
\left\langle \tau_{0,1}\tau_{d_{1},\alpha_{1}}\tau_{d_{2},\alpha_{2}}\right\rangle _{l,g-l;k} & =\delta_{g,0}\delta_{l,0}\delta_{k,0}\delta_{0,d_{1}}\delta_{0,d_{2}}\eta_{\alpha_{1}\alpha_{2}},\label{eq: cubic term}\\
\left\langle \tau_{0,1}\tau_{0,\alpha}\right\rangle _{l,g-l;k} & =0,\label{eq: quadratic term}\\
\left\langle \tau_{0,1}\right\rangle _{l,g-l;k} & =-\delta_{g,1}\delta_{l,0}\delta_{k,0}\frac{N}{24}.\label{eq: linear term}
\end{align}

\smallskip

\subsubsection{Proof of \Cref{eq: String induction} and \Cref{eq: cubic term}.}

First notice that \Cref{eq: String induction} holds by definition of the correlators when $n=0,1$. For the remaining cases, we need a preliminary lemma. 
\begin{lem}
\label{lem: small lemmas string}We have the following statements.
\begin{enumerate}
\item For a differential polynomial $f\in\mathcal{A}_{N}$, we have 
\begin{equation}
\partial_{x}f=\frac{1}{\hbar}\left[f,\overline{H}_{0,1}\right].\label{eq: H_0 x-derivative}
\end{equation}
\item For $d\geq0$ and $1\leq\alpha\leq N$, we have
\begin{equation}
\frac{\partial H_{d,\alpha}}{\partial p_{0}^{1}}=H_{d-1,\alpha}.\label{eq: string for hamiltonians}
\end{equation}
\item For $d\geq0$ and $1\leq\alpha\leq N$, we have
\begin{equation}
\Omega_{d,\alpha;0,1}=H_{d-1,\alpha}.\label{eq: Omega =00003D H}
\end{equation}
\item For $d_{1},d_{2}\geq0$ and $1\leq\alpha_{1},\alpha_{2}\leq N$, we have 
\begin{equation}
\frac{\partial\Omega_{d_{1},\alpha_{1};d_{2},\alpha_{2}}}{\partial p_{0}^{1}}=\Omega_{d_{1}-1,\alpha_{1};d_{2},\alpha_{2}}+\Omega_{d_{1},\alpha_{1};d_{2}-1,\alpha_{2}}+\delta_{d_{1},0}\delta_{d_{2},0}\eta_{\alpha_{1}\alpha_{2}},\label{eq:initial condition two point}
\end{equation}
where we use the convention that $\Omega$ vanishes if at least one of its index is negative.
\end{enumerate}
\end{lem}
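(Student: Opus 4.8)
Two elementary remarks drive the argument. For a differential polynomial written in the $p$-variables, $f\in\phi(\mathcal{A}_{N})$, one has $\partial f/\partial p_{0}^{1}=\partial f/\partial u_{0}^{1}$ (the right-hand side computed in the $u$-variables), since $\partial\phi(u_{s}^{\beta})/\partial p_{0}^{1}=(\ii\cdot 0)^{s}\delta_{\beta,1}=\delta_{s,0}\delta_{\beta,1}$; consequently $\partial/\partial p_{0}^{1}$ preserves $\mathcal{A}_{N}$, descends to $\overline{\mathcal{A}}_{N}$ (modulo a central constant), commutes with $\partial_{x}$ (no generator $u_{k}^{1}$ with $k\ge 1$ involves $p_{0}^{1}$), and — because the star product \eqref{eq:def-star-product} only uses the derivatives $\partial/\partial p_{\pm k}^{\ast}$ with $k>0$ — acts as a derivation of the star product and hence of the commutator. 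The plan is then: prove (2) directly from the defining formula; prove (1) (the statement that $\overline{H}_{0,1}$ generates spatial translation); deduce (3) from (1); and deduce (4) from (2).

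\emph{Statement (2).} Applying $\partial/\partial p_{0}^{1}$ to \eqref{eq:def-hamilto-density} picks out the monomials containing a factor $p_{0}^{1}$; by the symmetry of $c_{g,n+2}^{[\mu]}$ and of $\DR_{g}$ in their marked points the combinatorial prefactor changes as $n/n!=1/(n-1)!$, and the net effect is to add a marking carrying the unit $v_{1}$ with multiplicity $0$. I would now forget this new marking: by the flat-unit axiom $c_{g,m+1}^{[\mu]}(\dots,v_{1})=\pi^{\ast}c_{g,m}^{[\mu]}(\dots)$, by the pullback property of the double ramification cycle under forgetting a marking of multiplicity $0$, and because the Hodge classes pull back, the integrand becomes a pullback; the push-forward of the remaining $\psi_{1}^{d+1}$ at the surviving first marking is $\pi_{\ast}\psi_{1}^{d+1}=\psi_{1}^{d}$ by the string equation for $\psi$-classes. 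Re-indexing $n'=n-1$ one reads off exactly $H_{d-1,\alpha}$. For $d\ge 0$ the only marginally unstable case $(g,n)=(0,1)$ contributes zero on both sides, since $\psi_{1}^{d+1}$ has positive degree on $\oM_{0,3}$, so it requires no special treatment.

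\emph{Statements (1) and (3).} For (1) one may cite \cite{BR16-quantum} (see also \cite{Bur} for the classical version); alternatively, the same ingredients as above — flat unit, the string equation, and the push-forward divisibility of double ramification cycles (\Cref{prop:DR-pushforward-divisible-bb}) — show that every contribution to $H_{0,1}$ other than the genus-$0$, two-point term $\tfrac12\eta_{\mu\nu}u_{0}^{\mu}u_{0}^{\nu}$ is a total $x$-derivative or a constant, so $\overline{H}_{0,1}=\int\tfrac12\eta_{\mu\nu}u_{0}^{\mu}u_{0}^{\nu}\,\dd x$ in $\overline{\mathcal{A}}_{N}$; a direct evaluation of \eqref{eq:def-star-product} then gives $[f,\overline{H}_{0,1}]=\hbar\,\partial_{x}f$ for all $f\in\mathcal{A}_{N}$ (the order-$\hbar$ term is the Hamiltonian vector field $\eta^{\alpha\mu}\partial_{x}(\eta_{\mu\nu}u^{\nu})=u_{1}^{\alpha}$, and the higher star corrections vanish because the $(e^{\ii x})^{0}$-part of the density equals $\tfrac12\eta_{\mu\nu}\sum_{a}p_{a}^{\mu}p_{-a}^{\nu}$, whose iterated $\partial/\partial p_{-k}^{\ast}$-derivatives with all $k>0$ vanish). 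For (3): the definition \eqref{eq:def-two-point} together with (1) gives $\partial_{x}\Omega_{d,\alpha;0,1}=\tfrac1\hbar[H_{d-1,\alpha},\overline{H}_{0,1}]=\partial_{x}H_{d-1,\alpha}$, whence $\Omega_{d,\alpha;0,1}=H_{d-1,\alpha}+c_{d}$ with $c_{d}\in\CC[[\epsilon,\hbar,\mu]]$; applying $\partial/\partial u_{0}^{1}$, evaluating at $u_{\ast}^{\ast}=0$, and using (2) together with the normalisation \eqref{eq:constant-two point function} at the pair $(d+1,0)$ (where $\Omega$ with a negative index vanishes) forces $c_{d}=0$ for all $d\ge 0$.

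\emph{Statement (4) and the main obstacle.} I would differentiate \eqref{eq:def-two-point} by $\partial/\partial p_{0}^{1}$; since this operator commutes with $\partial_{x}$ and is a derivation of the commutator, and since by (2) it turns $H_{d_{1}-1,\alpha_{1}}$ into $H_{d_{1}-2,\alpha_{1}}$ and $\overline{H}_{d_{2},\alpha_{2}}$ into $\overline{H}_{d_{2}-1,\alpha_{2}}$ (the latter up to a central constant), one obtains $\partial_{x}\bigl(\partial\Omega_{d_{1},\alpha_{1};d_{2},\alpha_{2}}/\partial p_{0}^{1}\bigr)=\partial_{x}\bigl(\Omega_{d_{1}-1,\alpha_{1};d_{2},\alpha_{2}}+\Omega_{d_{1},\alpha_{1};d_{2}-1,\alpha_{2}}\bigr)$, hence the asserted identity up to an additive constant; the constant is $0$ when $(d_{1},d_{2})\ne(0,0)$ by \eqref{eq:constant-two point function} and equals $\eta_{\alpha_{1}\alpha_{2}}$ when $(d_{1},d_{2})=(0,0)$, as prescribed by the normalisation of the $(0,0)$ two-point functions (\Cref{lem:compatibility}). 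The delicate part — and the main obstacle — is precisely this bookkeeping of additive constants and of the edge cases $d_{1}=0$ or $d_{2}=0$: one must check that $\partial/\partial p_{0}^{1}$ applied to $\overline{H}_{d_{2},\alpha_{2}}$ differs from $\overline{H}_{d_{2}-1,\alpha_{2}}$ only by a central quantity (for $d_{2}=0$ using that the Casimirs $\overline{H}_{-1,\alpha}$ are central, by integrability) and that $\partial/\partial p_{0}^{1}$ applied to $H_{-1,\alpha}$ equals the scalar $\eta_{\alpha 1}$ (a one-line computation on $\oM_{0,3}$), which is what produces the Kronecker-delta term; this, together with establishing (1) — where the push-forward divisibility of double ramification cycles does the real work — is where the actual effort lies.
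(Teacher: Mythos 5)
Your treatment of parts (1)--(3), and the differentiation argument in part (4), essentially reproduces the paper's proof (the paper simply cites \cite{BR16-quantum} for the $u$-variable commutator in (1) and \cite{Blot} for (2), where you verify these directly; both routes are fine). However, there is a genuine gap in part (4), precisely at the point you yourself flag as delicate: the determination of the additive constant in the case $(d_1,d_2)=(0,0)$. Your $\partial_x$-argument only shows that $\frac{\partial\Omega_{0,\alpha_1;0,\alpha_2}}{\partial p_0^1}$ is a constant; its value is \emph{not} ``prescribed by the normalisation'': Equation \eqref{eq:constant-two point function} explicitly excludes $(d_1,d_2)=(0,0)$, and \Cref{lem:compatibility} only asserts existence and uniqueness of the solution to that system --- neither fixes $\left.\frac{\partial\Omega_{0,\alpha_1;0,\alpha_2}}{\partial p_0^1}\right|_{p_*^*=0}$, which is determined by the non-constant part of $\Omega_{0,\alpha_1;0,\alpha_2}$, i.e.\ by the hierarchy itself, and therefore must be computed.

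Your alternative justification also fails: the scalar $\frac{\partial H_{-1,\alpha_1}}{\partial p_0^1}=\eta_{\alpha_1 1}$ enters only inside the bracket $\left[\frac{\partial H_{-1,\alpha_1}}{\partial p_0^1},\overline{H}_{0,\alpha_2}\right]$, where it is killed (constants are central for the star product), so it cannot ``produce the Kronecker-delta term'' --- note also that it has the wrong index structure ($\eta_{\alpha_1 1}$ versus $\eta_{\alpha_1\alpha_2}$). What is actually needed is the evaluation $\left.\frac{\partial\Omega_{0,\alpha_1;0,\alpha_2}}{\partial p_0^1}\right|_{p_*^*=0}=\eta_{\alpha_1\alpha_2}$, which the paper obtains from \Cref{lem:derivative-two-point}: an explicit formula expressing this derivative as a bilinear sum of DR-cycle intersection numbers, which, after applying the push-forward property of DR cycles (\Cref{cor: push forward DR numbers}), collapses to the genus-zero three-point contribution $\eta_{\alpha_1\mu}\eta^{\mu\nu}\eta_{\nu\alpha_2}=\eta_{\alpha_1\alpha_2}$. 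Equivalently, one must extract the coefficient of the linear term of $\frac{1}{\hbar}\left[H_{-1,\alpha_1},\overline{H}_{0,\alpha_2}\right]$; some such computation is unavoidable, and it is missing from your argument.
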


\begin{proof}
$\left(1\right)$ We have 
\begin{equation}
\overline{H}_{0,1}=\int_{S^{1}}\frac{\left(u_{0}^{1}\right)^{2}}{2}dx
\end{equation}
since the coefficients of $\overline{H}_{0,1}$ are 
\begin{align}
\int_{DR_{g}\left(0,a_{1},\dots,a_{m},0\right)}\psi_{1}\lambda_{l}c_{g,m+2}^{[\mu]}\left(1,\alpha_{1},\dots,\alpha_{m},1\right)=\int_{DR_{g}\left(0,a_{1},\dots,a_{m}\right)}\lambda_{l}c_{g,m+2}^{[\mu]}\left(1,\alpha_{1},\dots,\alpha_{m}\right),
\end{align}
and they vanish whenever the map forgetting a marked point exists. Now using the expression of the commutator in $u$-variables given in \cite[Eq.~(1.2)]{BR16-quantum}, we obtain the conclusion.

$\left(2\right)$ The statement is an analogue of \cite[Lemma~2.4]{Blot}.

$\left(3\right)$ Using the definition of the two point function (\ref{eq:def-two-point}) and \Cref{eq: H_0 x-derivative}, we get 
\begin{align}
\Omega_{d,\alpha;0,1}=H_{d-1,\alpha}+C_{d,\alpha},\quad d\geq0,\:1\leq\alpha\leq N
\end{align}
for some constant $C_{d,\alpha}$. The constant is fixed using \Cref{eq:constant-two point function}) for $\left(d_{1},\alpha_{1};d_{2},\alpha_{2}\right)=\left(d,\alpha;0,1\right)$. Now using \Cref{eq: string for hamiltonians}, we get $C_{d,\alpha}=0$. This proves \Cref{eq: Omega =00003D H}.

$\left(4\right)$ First, it follows from \Cref{lem:derivative-two-point} using the push forward property of the DR cycles (\Cref{cor: push forward DR numbers}) that
\begin{equation}
\frac{\partial\Omega_{0,\alpha_{1};0,\alpha_{2}}}{\partial p_{0}^{1}}\Bigg|_{p_{*}^{*}=0}=\eta_{\alpha_{1}\alpha_{2}}.
\end{equation}
Then, \Cref{eq:initial condition two point} is an equality of differential polynomials, the equality of the degree $0$ follows from the choice of the constant for the two point functions given by \Cref{eq:constant-two point function}. To prove \Cref{eq:initial condition two point} in degree greater than zero, it is enough to prove that the $\partial_{x}$-derivative of the equation is satisfied. We have 
\begin{align}
\partial_{x}\frac{\partial\Omega_{d_{1},\alpha_{1};d_{2},\alpha_{2}}}{\partial p_{0}^{1}}=\frac{\partial}{\partial p_{0}^{1}}\partial_{x}\Omega_{p,\alpha_{1};q,\alpha_{2}} & =\frac{\partial}{\partial p_{0}^{1}}\frac{1}{\hbar}\left[H_{d_{1}-1,\alpha_{1}},\overline{H}_{d_{2},\alpha_{2}}\right]\\
 & =\frac{1}{\hbar}\left[\frac{\partial H_{d_{1}-1,\alpha_{1}}}{\partial p_{0}^{1}},\overline{H}_{d_{2},\alpha_{2}}\right]+\frac{1}{\hbar}\left[H_{d_{1}-1,\alpha_{1}},\frac{\partial\overline{H}_{d_{2},\alpha_{2}}}{\partial p_{0}^{1}}\right]\nonumber \\
 & =\partial_{x}\Omega_{d_{1}-1,\alpha_{1};d_{2},\alpha_{2}}+\partial_{x}\Omega_{d_{1},\alpha_{1};d_{2}-1,\alpha_{2}}.\nonumber 
\end{align}
\end{proof}
Fix $n\geq2$. Computing the correlator $\left\langle \tau_{0,1}\tau_{d_{1},\alpha_{1}}\cdots\tau_{d_{n},\alpha_{n}}\right\rangle _{l,g-l;k}$ boils down to compute 
\begin{align}
\frac{1}{\hbar^{n-1}}\big[\ldots\big[\Omega_{d_{1},\alpha_{1};d_{2},\alpha_{2}},\overline{H}_{d_{3},\alpha_{3}}\big]\ldots & ,\overline{H}_{d_{n},\alpha_{n}}\big],\overline{H}_{0,1}\big]\Big\vert_{u_{i}^{\gamma}=\delta_{i,1}\delta_{\gamma,1}}  = \\
& =\frac{1}{\hbar^{n-2}}\partial_{x}\left[\ldots\left[\Omega_{d_{1},\alpha_{1};d_{2},\alpha_{2}},\overline{H}_{d_{3},\alpha_{3}}\right]\ldots,\overline{H}_{d_{n},\alpha_{n}}\right]\Big\vert_{u_{i}^{\gamma}=\delta_{i,1}\delta_{\gamma,1}}\nonumber\\
 & =\frac{1}{\hbar^{n-2}}\frac{\partial}{\partial p_{0}^{1}}\left[\ldots\left[\Omega_{d_{1},\alpha_{1};d_{2},\alpha_{2}},\overline{H}_{d_{3},\alpha_{3}}\right]\ldots,\overline{H}_{d_{n},\alpha_{n}}\right]\Big\vert_{u_{i}^{\gamma}=\delta_{i,1}\delta_{\gamma,1}}\nonumber,
\end{align}
where we used first \Cref{eq: H_0 x-derivative}, then used $\partial_{x}=\sum_{i\geq0}u_{i+1}^{\alpha}\frac{\partial}{\partial u_{i}^{\alpha}}$ and then switched to the $p$-variable using the identification of the operator $\frac{\partial}{\partial u_{0}^{1}}=\frac{\partial}{\partial p_{0}^{1}}$ on differential polynomials. We then act with $\frac{\partial}{\partial p_{0}^{1}}$ on each element of the commutators and use \Cref{eq: string for hamiltonians} and \Cref{eq:initial condition two point} to find
\begin{align}
\frac{1}{\hbar^{n-2}} & \Bigg(\left[\ldots\left[\Omega_{d_{1}-1,\alpha_{1};d_{2},\alpha_{2}},\overline{H}_{d_{3},\alpha_{3}}\right]\ldots,\overline{H}_{d_{n},\alpha_{n}}\right]\Big\vert_{u_{i}^{\gamma}=\delta_{i,1}\delta_{\gamma,1}}\nonumber\\
&+ \left[\ldots\left[\Omega_{d_{1},\alpha_{1};d_{2}-2,\alpha_{2}},\overline{H}_{d_{3},\alpha_{3}}\right]\ldots,\overline{H}_{d_{n},\alpha_{n}}\right]\Big\vert_{u_{i}^{\gamma}=\delta_{i,1}\delta_{\gamma,1}}\nonumber\\
 & +\left[\ldots\left[\delta_{d_{1},0}\delta_{d_{2},0}\eta_{\alpha_{1}\alpha_{2}},\overline{H}_{d_{3},\alpha_{3}}\right]\ldots,\overline{H}_{d_{n},\alpha_{n}}\right]\Big\vert_{u_{i}^{\gamma}=\delta_{i,1}\delta_{\gamma,1}}\\
 & +\left[\ldots\left[\Omega_{d_{1},\alpha_{1};d_{2},\alpha_{2}},\overline{H}_{d_{3}-1,\alpha_{3}}\right]\ldots,\overline{H}_{d_{n},\alpha_{n}}\right]\Big\vert_{u_{i}^{\gamma}=\delta_{i,1}\delta_{\gamma,1}}\nonumber\\
 &+ \ldots \nonumber\\
 & +\left[\ldots\left[\Omega_{d_{1},\alpha_{1};d_{2},\alpha_{2}},\overline{H}_{d_{3},\alpha_{3}}\right]\ldots,\overline{H}_{d_{n}-1,\alpha_{n}}\right]\Big\vert_{u_{i}^{\gamma}=\delta_{i,1}\delta_{\gamma,1}}\Bigg).\nonumber
\end{align}

\smallskip

\subsubsection{Proof of \Cref{eq: quadratic term}.}

We have
\begin{align}
\left\langle \tau_{0,\alpha}\tau_{0,1}\right\rangle _{l,g-l;k} & =i^{-3g+1+k}{\rm Coeff}_{\epsilon^{2l}\hbar^{g-l}\mu^k}\underset{=H_{-1,\alpha}}{\underbrace{\Omega_{0,\alpha;0,1}}}\vert_{u_{i}^{\beta}=\delta_{i,1}\delta_{\beta,1}}
\end{align}
by \Cref{lem: small lemmas string}. Now using the evaluation lemma \cite[Lemma~6.2]{BDGR1}, this expression boils down to compute 
\begin{equation}
{\rm \Coeff}_{a_{1}a_{2}\cdots a_{m}}\int_{{\rm DR}_{g}\left(0,a_{1},\dots,a_{m},-\sum a_{i}\right)}\lambda_{l}c_{g,m+2}^{[\mu]}\left(v_{\alpha},v_{1},\dots,v_{1}\right)\qquad m\geq0.
\end{equation}
This expression vanishes for $m\geq1$ by the push forward property of the DR cycles (\Cref{cor: push forward DR numbers}). When $m=0$ the whole integrand is a pull-back by the map forgetting a marked point so the integral vanishes as well. Therefore, the correlator vanishes.

\smallskip 

\subsubsection{Proof of \Cref{eq: linear term}.}

We have 
\begin{equation}
\left\langle \tau_{0,1}\right\rangle _{l,g-l;k}=\left\langle \tau_{1,1}\tau_{0,1}\right\rangle _{l,g-l;k}=i^{-3g+2+k}{\rm \Coeff}_{\epsilon^{2l}\hbar^{g-l}\mu^k}\underset{=H_{0,1}}{\underbrace{\Omega_{1,1;0,1}}}\vert_{u_{i}^{\beta}=\delta_{i,1}\delta_{\beta,1}}.
\end{equation}
Now using the evaluation lemma \cite[Lemma~6.2]{BDGR1}, this expression boils down to compute 
\begin{equation}
{\rm \Coeff}_{a_{1}a_{2}\cdots a_{m}}\int_{{\rm DR}_{g}\left(0,a_{1},\dots,a_{m},-\sum a_{i}\right)}\psi_{1}\lambda_{l}c_{g,m+2}^{[\mu]}\left(v_{1},v_{1},\dots,v_{1}\right)\qquad m\geq0.
\end{equation}
This expression vanishes for $m\geq1$ using the push forward property of the DR cycles (\Cref{cor: push forward DR numbers}). For $m=0$, we have ${\rm DR}_{g}\left(0,0\right)=\left(-1\right)^{g}\lambda_{g}$ so we get
\begin{equation}
\left(-1\right)^{g}\int_{\overline{\mathcal{M}}_{g,2}}\psi_{1}\lambda_{g}\lambda_{l}c_{g,2}^{[\mu]}\left(v_{1},v_{1}\right)=\delta_{g,1}\delta_{l,0}\left(-1\right)^{g}\underset{=\delta_{k,0}\frac{N}{24}}{\underbrace{\int_{\overline{\mathcal{M}}_{1,1}}\lambda_{1}c_{1,1}^{[\mu]}\left(v_{1}\right)}}.
\end{equation}
Therefore, we obtain
$
\left\langle \tau_{0,1}\right\rangle _{l,g-l;k}=-\delta_{g,1}\delta_{l,0}\delta_{k,0}\frac{N}{24}.
$

\medskip

\subsection{Step 2: a geometric formula for the correlators with $\tau_{0,1}$-insertion} \label{sec:geo-fomula}

It follows from the string equation (\Cref{eq: String induction}) that 
\begin{align}
	\label{eq:csq-string-eq}
	\sum_{\substack{d_{1},\dots,d_{n}\geq0\\
\sum d_{i}=d
}
}\left\langle \tau_{d_{1},\alpha_{1}}\cdots\tau_{d_{n},\alpha_{n}}\right\rangle _{l,g-l;k}&\prod a_{i}^{d_{i}}  = \\ \notag
\frac{1}{\suma}\sum_{\substack{d_{1},\dots,d_{n}\geq0\\
\sum d_{i}=d+1
}
}&\left\langle \tau_{0,1}\tau_{d_{1},\alpha_1}\cdots\tau_{d_{n},\alpha_n}\right\rangle _{l,g-l;k}\prod a_{i}^{d_{i}}.
\end{align}

Thus, any correlator is determined by a linear combination of correlators with a $\tau_{0,1}$-insertion. Therefore, in what follows, we will only consider correlators with a $\tau_{0,1}$-insertion. The goal of this section is to prove \Cref{prop:geometric-formula-correlators} providing geometric formula for these correlators.

\subsubsection{Admissible modified stable graphs}
We define the set ${\rm AMSG}_{g,m+1}^{n_v}$ 
of admissible modified stable graphs (to be opposed to admissible stable graphs to be defined in the next section). It is the set of connected stable graphs of total genus $g$, with $n_v$ ordered vertices $v_1,\dots,v_{n_v}$, and with $n_v+m+1$ leaves $\ell_1,\dots,\ell_{n_v+m+1}$. Further conditions are the following:
\begin{itemize}
	\item the last leaf $\ell_{n_v+m+1}$ is attached to the first vertex $v_1$,
	\item the leaf $\ell_i$ is attached to $v_i$ for $1\leq i \leq n_v$,
	\item the remaining $m$ leaves, called the free leaves, can be attached on any vertex,
	\item each vertex is connected to at least one vertex of smaller index (except the first vertex) by an edge, this condition is called the admissibility condition,
	\item any edge connects two different vertices (that is, loops are forbidden).
\end{itemize}
Note that the last condition allows to endow each edge $e\in E(T)$ with a canonical orientation in the direction of the vertex with higher index. That is, if the head of the edge $h(e)$ is attached to $v_i$ and its tail $t(e)$ is attached to $v_j$, then we have $i>j$, for any $e\in E(T)$.

\subsubsection{Admissible decorations}
Fix $a_{1},\dots,a_{m}\in\mathbb{Z}_{>0}$. An admissible decoration of $\Gamma\in{\rm AMSG}_{g,m+1}^{n_v}$ is a function
\begin{align}
a (\cdot):H\left(\Gamma\right)\rightarrow\mathbb{Z}, 
\end{align}
recalling that $H(\Gamma)$ is the set of half edges including the leaves. The function  $a (\cdot)$ satisfies the following conditions:
\begin{itemize}
\item its value at the leaves is
	\begin{itemize}
	\item $a(\ell_i)=0$ if $1\leq i \leq n_v$,
	\item $a(\ell_i)=a_{i-n_v}$ if $n_v+1 \leq i \leq n_v+m$,
	\item $a(\ell_{n_v+m+1})=-\sum_{j=1}^{n} a_j $,
	\end{itemize}
\item for each edge $e$  of $\Gamma$ we impose
\begin{align}
a(h(e))+a(t(e))=0, \quad \rm{such\,that}\quad a(h(e))<0,\,\,a(t(e))>0,
\end{align}
\item for each vertex $v$ of $\Gamma$, denote $H(v)$ the set of half edges attached to $v$, we impose
\begin{align}
\sum_{h \in H(v))=v}a\left(h\right)=0,
\end{align}
where the sum is over half edges attached to $v$.
\end{itemize}
We denote by ${\rm AD}\left(\Gamma,a_{1},\dots,a_{n}\right)$ the set of admissible decorations of $\Gamma$.

\subsubsection{The geometric formula}
Fix the nonnegative integers $g,n_v,m$ and $l$ such that $0\leq l\leq g$, and fix $a_{1},\dots,a_{m}\in\mathbb{Z}_{>0}$. Let $\Gamma\in{\rm AMSG}_{g,m+1}^{n_v}$ and let $a\in{\rm AD}\left(\Gamma,a_{1},\dots,a_{m}\right)$ be an admissible decoration of $\Gamma$. We denote by
\begin{align}
{\rm DR}_{\Gamma}\left(a_{1},\dots,a_{m},-\sum a_{i}\right)
\end{align}
the cohomology class
\begin{align}
\left(\prod_{e \in E(\Gamma)}a(t(e))\right)\cdot(b_{\Gamma})_{*}\left(\prod_{v\in V\left(\Gamma\right)}{\rm DR}_{g\left(v\right)}\left(a\left(h\right)_{h\in H\left(v\right)}\right)\right)\in R^{\sum g_{i}+\left|E\left(\Gamma\right)\right|}\left(\overline{\mathcal{M}}_{g,n_v+m+1}\right),
\end{align}
where $(b_{\Gamma})_{*}$ is the boundary pushforward map from $\otimes_{v\in V\left(\Gamma\right)}R^{g\left(v\right)}\left(\overline{\mathcal{M}}_{g\left(v\right),\left|H\left(v\right)\right|}\right)$ to \\$R^{\sum g_{i}+\left|E\left(\Gamma\right)\right|}\left(\overline{\mathcal{M}}_{g,n_v+m+1}\right)$. 

\begin{prop}
\label{prop:geometric-formula-correlators}
Let $g,l,k$ and $n$ be nonnegative integers such that $l\leq g$. Let $d_{1},\dots,d_{n}$ be nonnegative integers and let $d=\sum d_i$. The correlator
\begin{align}
\left\langle \tau_{0,1}\tau_{d_{1},\alpha_1}\cdots\tau_{d_{n},\alpha_n}\right\rangle _{l,g-l;k}
\end{align}
is given by the coefficient of $a_{1}a_{2}\cdots a_{m}$ in 
\begin{align}
\sum_{\Gamma\in{\rm AMSG}_{g,m+1}^{n}}  \sum_{a\in AD(\Gamma,a_{1},\dots,a_{m})}&\frac{1}{|\mathrm{Aut}(\Gamma)|}\frac{1}{m!} \times \\
 & \times\int_{{\rm DR}_{\Gamma}\left(a_{1},\dots,a_{m},-\sum a_{i}\right)}\lambda_l c_{g,n+m+1;k}\left(\otimes_{i=1}^{n}v_{\alpha_{i}}\otimes v_{1}^{m+1}\right)\prod_{i=1}^{n}\psi_{i}^{d_{i}},\notag
\end{align}
where $m=d+l+k-2g+1$. 
\end{prop}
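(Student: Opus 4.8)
The plan is to unfold the definition of the correlators $\left\langle \tau_{0,1}\tau_{d_{1},\alpha_1}\cdots\tau_{d_{n},\alpha_n}\right\rangle _{l,g-l;k}$ as an iterated star-commutator of the two-point function $\Omega_{d_1,\alpha_1;d_2,\alpha_2}$ with the Hamiltonians $\overline{H}_{d_i,\alpha_i}$, and then to evaluate every building block geometrically via the DR-cycle formula~\eqref{eq:def-hamilto-density} and the analogous expression for $\Omega$. Concretely, I would first observe that $\Omega_{d_1,\alpha_1;d_2,\alpha_2}$ itself is represented, up to the fixing of the constant term, by a DR-cycle integral (this follows from the definition~\eqref{eq:def-two-point} together with the explicit $u$-variable commutator formula of~\cite{BR16-quantum}, exactly as in the classical computation of~\cite{BDGR1}). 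Then each star-commutator $[\,\cdot\,,\overline{H}_{d_i,\alpha_i}]$ contributes, term by term in its expansion $\sum_{k>0}\ii\hbar k\,\eta^{\alpha\beta}\partial_{p_k^\alpha}\otimes\partial_{p_{-k}^\beta}+\cdots$, a gluing of two DR cycles along a pair of half-edges carrying opposite decorations $a(h(e))=-a(t(e))$; the factor $k$ from the star product becomes exactly the factor $a(t(e))$ appearing in the definition of ${\rm DR}_\Gamma$. Iterating over the $n$ commutators and bookkeeping the combinatorics of which vertex absorbs which derivative produces precisely a sum over graphs $\Gamma$ with $n_v$ ordered vertices $v_1,\dots,v_{n_v}$, one per inserted $\tau_{d_i,\alpha_i}$ (together with the $\tau_{0,1}$ sitting on $v_1$), decorated by admissible decorations $a\in {\rm AD}(\Gamma,a_1,\dots,a_m)$.

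The second ingredient is the \emph{evaluation lemma} of~\cite[Lemma~6.2]{BDGR1} (used already in Step~1 above), which converts the substitution $u_i^\alpha = \delta_{i,1}\delta_{\alpha,1}$ in a differential polynomial written in the $p$-variables into the operation of summing over the free $p$-variables $p_{a_1}^{1}\cdots p_{a_m}^{1}$ with weights given by extra insertions of $v_1$ and of $\psi_1^{(\cdot)}$-powers on the corresponding DR cycle, and then extracting the coefficient of $a_1a_2\cdots a_m$. This is where the number $m$ of free leaves gets pinned down: tracking the $\epsilon$-, $\hbar$- and $\mu$-degrees through~\eqref{eq:def-hamilto-density} and the cohomological degree of $c_{g,n+m+1;k}$ against the dimension of $\overline{\mathcal M}_{g,n+m+1}$ forces the dimensional/degree balance $m = d+l+k-2g+1$, and the $\ii$-powers in the normalisation of the correlators are engineered (see the Remark after the definition of $F$) precisely to cancel the $(\ii\hbar)^{g(v)}$ and $(-\ii)^{\cdots}$ factors, leaving $\lambda_l = \Coeff_{\epsilon^{2l}}\Lambda(-\epsilon^2/\ii\hbar)\cdot(\ldots)$ times a clean sign-free integrand.

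Thus the key steps, in order, are: (1) express $\Omega_{d_1,\alpha_1;d_2,\alpha_2}$ as a DR-cycle integral with the correct constant term, using~\eqref{eq:def-two-point}, Lemma~\ref{lem: small lemmas string}, and the $u$-variable commutator of~\cite{BR16-quantum}; (2) expand the iterated star-commutator and identify each Wick-type contraction with an edge carrying an admissible decoration, turning the product into a boundary pushforward $(b_\Gamma)_*$ of a product of DR cycles over vertices, with the $\prod_e a(t(e))$ prefactor coming from the star product; (3) impose the stability/admissibility constraints — connectedness, the ordering $i>j$ on edges, no loops, each $v_i$ carrying $\ell_i$, $v_1$ carrying $\ell_{n_v+m+1}$ — verifying they match exactly the combinatorial structure of ${\rm AMSG}_{g,m+1}^{n}$ and that the $1/|{\rm Aut}(\Gamma)|$ and $1/m!$ factors come out right from symmetrisation over the free leaves and from repeated vertices; (4) apply the evaluation lemma to do the substitution $u_i^\alpha=\delta_{i,1}\delta_{\alpha,1}$ and the $\Coeff_{a_1\cdots a_m}$ extraction, checking the degree count that fixes $m$; (5) collect the $\ii$-, $\hbar$-, $\epsilon$-powers and see that the normalisation produces $\lambda_l$ and kills all spurious constants. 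I expect step~(2)–(3) — the precise matching of the combinatorics of the iterated star-commutator with the definition of admissible modified stable graphs, including the orientation convention and the automorphism/symmetry factors — to be the main obstacle, since one must carefully track how derivatives $\partial_{p_{\pm k}}$ distribute over already-glued vertices and argue that the admissibility condition (``each vertex connected to a vertex of smaller index'') is exactly what survives, with nothing over- or under-counted; the rest is essentially the bookkeeping of~\cite{BDGR1} adapted to the quantum setting.
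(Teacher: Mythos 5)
Your overall strategy coincides with the paper's: use the $\tau_{0,1}$ insertion to identify the two-point function with a Hamiltonian density, expand the iterated brackets, apply the evaluation lemma of \cite[Lemma~6.2]{BDGR1}, and reorganize the result as a sum over decorated graphs, with the degree count pinning down $m=d+l+k-2g+1$. (One small correction to your step (1): you do not need, and the paper does not establish, a DR-cycle representation of the general two-point function $\Omega_{d_1,\alpha_1;d_2,\alpha_2}$ with its constant term; for this proposition only $\Omega_{0,1;d_1,\alpha_1}=H_{d_1-1,\alpha_1}$ from Lemma~\ref{lem: small lemmas string} is used, since the first insertion is $\tau_{0,1}$.)

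However, there is a genuine gap exactly at the point you flag as ``the main obstacle'' and then leave unresolved: the matching of the iterated \emph{commutators} with the \emph{admissible} (ordered) graphs. If you expand the commutators naively, each bracket $[\,\cdot\,,\overline{H}_{d_i,\alpha_i}]$ contributes two terms, and the ``reverse'' terms, in which the star-product derivatives act from $\overline{H}_{d_i,\alpha_i}$ onto the previously built expression, generate contributions that do not respect the ordering/admissibility condition; without an argument, nothing guarantees that they cancel or vanish, and the $1/|\mathrm{Aut}(\Gamma)|$, $1/m!$ bookkeeping cannot be verified. The paper's key device is to introduce the non-associative product $f\,\tilde\star\,g:=f\star g-fg$ and to prove that, when one extracts the coefficient of $p_{a_1}^{1}\cdots p_{a_m}^{1}\epsilon^{2l}\hbar^{g-l+n-1}\mu^k e^{\ii\sum a_i x}$ with all $a_i>0$, the iterated commutator has the same coefficient as the one-sided iterated product $(\cdots(H_{d_1-1,\alpha_1}\tilde\star\overline{H}_{d_2,\alpha_2})\tilde\star\cdots)\tilde\star\overline{H}_{d_n,\alpha_n}$. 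The vanishing of the reverse terms (Equation~\eqref{eq:vanishing-star-product}) is a positivity argument: the derivatives $\partial/\partial p_{k_j}^{\gamma_j}$ with $k_j>0$ acting on $\overline{H}_{d_n,\alpha_n}$ leave DR cycles in \eqref{eq:def-hamilto-density} whose remaining weights must be positive, contradicting the zero-sum condition on DR-cycle entries once all the extracted $p$-variables carry positive indices. It is precisely this reduction to one-sided $\tilde\star$-products that makes the admissibility condition (each vertex attached by an edge to a vertex of smaller index) an automatic consequence of non-associativity, rather than something that ``survives'' after cancellations. Supplying this lemma, or an equivalent mechanism, is the missing step needed to turn your outline into a proof.
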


\begin{proof}

We have
\begin{align}
\left\langle \tau_{0,1}\tau_{d_{1},\alpha_{1}}\cdots\tau_{d_{n},\alpha_{n}}\right\rangle _{l,g-l}&= \label{eq:getting-geometric-formula}\\
\ii^{\sum d_i -3g+2-n+k}&{\rm \Coeff}_{\epsilon^{2l}\hbar^{g-l}\mu^k}\left(\frac{1}{\hbar^{n-1}}\left[\cdots\left[\Omega_{0,1;d_{1},\alpha_{1}},\overline{H}_{d_{2},\alpha_{2}}\right],\cdots,\overline{H}_{d_{n},\alpha_{n}}\right]\right)\Big|_{u_{i}^{\alpha}=\delta_{i,1}\delta_{\alpha,1}}.\nonumber
\end{align}
By \Cref{lem: small lemmas string}, we directly make the replacement $\Omega_{0,1;d_{1},\alpha_{1}}=H_{d_{1}-1,\alpha_{1}}$. We want to compute the right hand side using the evaluation lemma \cite[Lemma~6.2]{BDGR1}. We first introduce the non associative product $f\tilde{\star}g=f\star g-fg$, which is convenient for the following reason.
\begin{lem}
Let $m\geq0$ and $1\leq\beta_{1},\dots,\beta_{m}\leq N$. Suppose $a_{1},\dots,a_{m}>0$, then the coefficient of $p_{a_{1}}^{\beta_{1}}\cdots p_{a_{m}}^{\beta_{m}}\epsilon^{2l}\hbar^{g-l+n-1}\mu^ke^{i\sum a_{i}x}$ in
\begin{equation}
\left[\cdots\big[\left[H_{d_{1}-1,\alpha_{1}},\overline{H}_{d_{2},\alpha_{2}}\right],\overline{H}_{d_{3},\alpha_{3}}\big]\cdots,\overline{H}_{d_{n},\alpha_{n}}\right]
\end{equation}
and in
\begin{equation}
\big(\cdots\big(\big(H_{d_{1}-1,\alpha_{1}}\tilde{\star}\overline{H}_{d_{2},\alpha_{2}}\big)\tilde{\star}\overline{H}_{d_{3},\alpha_{3}}\big)\tilde{\star}\cdots \big)\tilde{\star}\overline{H}_{d_{n},\alpha_{n}}
\end{equation}
is the same.
\end{lem}

\begin{proof}
This occurs because, first, the bracket $[\cdot,\cdot]$ defined as the commutator of the $\star$-product is also the commutator of the $\tilde{\star}$-product. Second, we have
\begin{equation}
{\rm \Coeff}_{p_{a_{1}}^{\beta_{1}}\cdots p_{a_{m}}^{\beta_{m}}\epsilon^{2l}\hbar^{g-l+n-1}\mu^ke^{\ii\sum a_{i}x}}\overline{H}_{d_{n},\alpha_{n}}\tilde{\star}\left[\ldots\left[\left[H_{d_{1}-1,\alpha_{1}},\overline{H}_{d_{2},\alpha_{2}}\right],\overline{H}_{d_{3},\alpha_{3}}\right],\dots\right]=0,\quad a_{1},\dots,a_{m}>0.\label{eq:vanishing-star-product}
\end{equation}
Indeed, the term $\overline{H}_{d_{n},\alpha_{n}}\tilde{\star}\left(\cdots\right)$ involves a sum of a product of derivative $\frac{\partial}{\partial p_{k_{1}}^{\gamma_{1}}}\cdots\frac{\partial}{\partial p_{k_{s}}^{\gamma_{s}}}$, with $k_{1},\dots,k_{s}>0$ and $1\leq\gamma_{1},\dots,\gamma_{s}\leq N$, from the star product acting on $\overline{H}_{d_{n},\alpha_{n}}$, and since we use the $\tilde{\star}$-product we have $s\geq1$. From the definition of the Hamiltonian density (\Cref{eq:def-hamilto-density}), after using the integration map, we get 
\begin{align}
& \frac{\partial}{\partial p_{k_{1}}^{\gamma_{1}}}\cdots\frac{\partial}{\partial p_{k_{s}}^{\gamma_{s}}}\overline{H}_{d_{n},\alpha_{n}} 
\\ \notag 
& =\sum_{\substack{g,m\geq0\\
2g+m+s-1>0
}
}\frac{\left(i\hbar\right)^{g}}{m!}\sum_{a_{1}+\cdots+a_{m}+k_{1}+\cdots+k_{s}=0}p_{a_{1}}^{\alpha_{1}}\cdots p_{a_{m}}^{\alpha_{m}}\\ \notag 
 & \qquad \times\left(\int_{{\rm DR}_{g}\left(0,a_{1},\dots,a_{m},k_{1},\dots,k_{s}\right)}\psi_{1}^{d}\Lambda\left(\frac{-\epsilon^{2}}{i\hbar}\right)c_{g,m+s+1}^{[\mu]}\left(v_{\alpha},v_{\alpha_{1}},\dots,v_{\alpha_{m}},v_{\gamma_{1}},\dots,v_{\gamma_{s}}\right)\right),
\end{align}
where we used the pull-back property of the $\psi$-class to simplify the intersection number. Consequently, when extracting the coefficient of $p_{a_{1}}^{\beta_{1}}\cdots p_{a_{m}}^{\beta_{m}}\epsilon^{2l}\hbar^{g-l+n-1}\mu^ke^{i\sum a_{i}x}$ with the condition $a_{1},\dots,a_{m}>0$, the sum of the parts of each DR-cycle in $\frac{\partial}{\partial p_{k_{1}}^{\gamma_{1}}}\cdots\frac{\partial}{\partial p_{k_{s}}^{\gamma_{s}}}\overline{H}_{d_{n},\alpha_{n}}$ is positive, contradicting the requirement that they sum to zero. This proves the vanishing of \Cref{eq:vanishing-star-product}. Then, a direct induction justifies the statement.
\end{proof}
Therefore, computing \Cref{eq:getting-geometric-formula} using the evaluation lemma \cite[Lemma~6.2]{BDGR1} yields
\begin{align}
 & \label{eq:formula-end-geometric} \left\langle \tau_{0,1}\tau_{d_{1},\alpha_{1}}\cdots\tau_{d_{n},\alpha_{n}}\right\rangle _{l,g-l;k}=i^{\sum d_i -3g+2-n+k}\times\\
 & \times \sum_{m\geq0}\left(-i\right)^{m}{\rm \Coeff}_{a_{1}a_{2}\cdots a_{m}}\left({\rm \Coeff}_{p_{a_{1}}^{1}\cdots p_{a_{m}}^{1}\epsilon^{2l}\hbar^{g-l+n-1}\mu^ke^{i\sum a_{i}x}}\left(\cdots(H_{d_{1}-1,\alpha_{1}}\tilde{\star}\overline{H}_{d_{2},\alpha_{2}})\tilde{\star}\cdots\tilde{\star}\overline{H}_{d_{n},\alpha_{n}}\right)\right).\nonumber
\end{align}

The statement of the proposition is just a rewriting of \Cref{eq:formula-end-geometric} using graphs. More specifically, in \Cref{eq:formula-end-geometric} we first expand $H_{d_{1}-1,\alpha_{1}}$ using the definition of the Hamiltonian density (\Cref{eq:def-hamilto-density}). Then, we expand each Hamiltonian $\overline{H}_{d_{i},\alpha_{i}}$ using the same formula and the definition of the integration map, but we directly simplify the intersection numbers involved in $\overline{H}_{d_{i},\alpha_{i}}$ by the pull-back property of the $\psi$-class,  ensuring that each intersection number involves $\psi^{d_i}$ (instead of $\psi^{d_i+1}$). Finally, the $\tilde{\star}$-products are expanded using \Cref{eq:def-star-product}. Each term of this expansion is associated to a graph. 

	\begin{itemize}
		\item The vertices of these graphs are associated to each factor of the $\tilde{\star}$-products, therefore there are $n$ vertices indexed by $1,\dots,n$. The genus of the $i$-th vertex is the genus of the intersection number arising from the $i$-th term of the product.
		\item The half-edges attached to the $i$-th vertex correspond to the marked points of the $i$-th intersection number. Moreover, the value of the admissible decoration at a half-edge is the weight of the DR cycle at the corresponding marked point. 
		\item Each edge between the $i$-th vertex and the $j$-th vertex is associated to a conjugate derivative $\eta^{\alpha\beta}k\frac{\partial}{\partial p_{k}^{\alpha}}\frac{\partial}{\partial p_{-k}^{\beta}}$, which arises from a $\tilde{\star}$-product, acting on the $i$-th and the $j$-th terms in the product $(\cdots(H_{d_{1}-1,\alpha_{1}}\tilde{\star}\overline{H}_{d_{2},\alpha_{2}})\tilde{\star}\cdots\tilde{\star}\overline{H}_{d_{n},\alpha_{n}})$. More precisely, such a derivative, through its action on the $p$-variables, relate two marked points of the $i$-th and $j$-th intersection number, by imposing that they have opposite admissible decorations. 
		\item The first $m$ legs $\ell_1,\dots,\ell_m$ correspond to the $m$ marked points associated to the variables $p_{a_{1}}^{1}, $ $\dots ,p_{a_{m}}^{1}$. Notice that in \Cref{eq:formula-end-geometric}, since we extract these variables from the product $(\cdots(H_{d_{1}-1,\alpha_{1}}\tilde{\star}\overline{H}_{d_{2},\alpha_{2}})\tilde{\star}\cdots\tilde{\star}\overline{H}_{d_{n},\alpha_{n}})$, they can belong to any of the $n$ factors, thus these legs belong to any of the $n$ vertices. Moreover, the insertion of the CohFT at these marked points is $v_1$.
		\item Each of the $n$ next legs $\ell_{m+i}$ for $1\leq i \leq n$ corresponds to the marked point of the $i$-th intersection number supporting the $\psi$-class.
		\item The final leg, $\ell_{m+n+1}$, corresponds to the remaining half-edge: it represents the marked point of the first intersection number, which in \Cref{eq:def-hamilto-density} is the last marked point of the intersection number. Notice that this point is only present is the first intersection number because it arises from a Hamiltonian density (as opposed to Hamiltonian).
		\item The admissibility condition of the graphs follows from the non associativity of the $\tilde{\star}$-product. 
	\end{itemize}
	
	In addition, notice that the intersection numbers involved in the right hand side of \Cref{eq:formula-end-geometric} vanish if $m \neq d+l+k-2g+1$, thus the summation over $m$ is reduced to one element. This completes the proof.

\end{proof}

\medskip

\subsection{Step 3: simplification of the geometric formula by the push-forward lemma} \label{sec:simplification-push-formula}

We first define the set $\mathrm{ASG}_{g,n+1}^{n_v}$ of the so-called admissible stable graphs. In is the set of connected stable graphs of total genus $g$, with $n_v$ ordered vertices $v_1,\dots,v_{n_v}$, and with $n+1$ leaves $\ell_1,\dots,\ell_{n+1}$. Further conditions are the following:
\begin{itemize}
	\item The leaf $\ell_{n+1}$ is attached to the vertex $v_1$. 
	\item For any vertex $v_i$, $i=1,\dots,n_v$ there is at least one leaf $\ell_j$, $j=1,\dots,n$, attached to it. In particular, $1\leq n_v\leq n$. 
	\item Let $L(v)$ be the set of numbers of the leaves attached to the vertex $v$. In particular, the previous conditions mean that $L(v_{1})\ni n+1$ and $L(v_i)\ne\emptyset$, $i=1,\dots,n_v$. We require that if $i<j$, then $\min L(v_i) < \min L(v_j)$. In particular, $L(v_{1})\ni 1$.
	\item Each vertex is connected to at least one vertex of smaller index (except the first vertex) by an edge, this condition is called the admissibility condition.
	\item Any edge connects two different vertices (that is, loops are forbidden).
\end{itemize}
Note that the last condition allows to endow each edge $e\in E(T)$ with a canonical orientation in the direction of the vertex with higher index. That is, if $h(e)$ is attached to $v_i$ and $t(e)$ is attached to $v_j$, then we have $i>j$, for any $e\in E(T)$.

Let $\Gamma \in \mathrm{ASG}_{g,n+1}^{n_v}$, and let $a_1,\dots,a_n \in \ZZ_{>0}$. We are going to define the set $\mathrm{AD}(\Gamma,a_1,\dots,a_n)$ of admissible decorations of $\Gamma$ that also depends on $a_1,\dots,a_n$. It is a set of maps $a\colon H(\Gamma) \to \ZZ$ (recall that $H(\Gamma)$ denotes the set of half-edges of $\Gamma$ including its leaves) satisfying the following conditions:
\begin{itemize}
	\item $a(\ell_i) = a_i$, $i=1,\dots,n$.
	\item $a(\ell_{n+1}) = -(a_1+\cdots+a_n)$.
	\item $a(h(e)) + a(t(e)) = 0$, $a(h(e))<0$, $a(t(e))>0$, for any $e\in E$. 
	\item For any vertex $v_i$ let $H(v_i)$ be the set of half-edges (including leaves) attached to $v_i$. We require that $\sum_{h\in H(v_i)} a(h)=0$, $i=1,\dots,m$. 
\end{itemize}

Once again, we denote by
\begin{align}
{\rm DR}_{\Gamma}\left(a_{1},\dots,a_{n},-\sum a_{i}\right)
\end{align}
the cohomology class
\begin{align}
\left(\prod_{e \in E(\Gamma)}a(t(e))\right)\cdot(b_{\Gamma})_{*}\left(\prod_{v\in V\left(\Gamma\right)}{\rm DR}_{g\left(v\right)}\left(a\left(h\right)_{h\in H\left(v\right)}\right)\right),
\end{align}
where $(b_{\Gamma})_{*}$ is the boundary pushforward map from $\otimes_{v\in V\left(\Gamma\right)}R^{g\left(v\right)}\left(\overline{\mathcal{M}}_{g\left(v\right),\left|H\left(v\right)\right|}\right)$ to \\$R^{\sum g_{i}+\left|E\left(\Gamma\right)\right|}\left(\overline{\mathcal{M}}_{g,n+1}\right)$.

\begin{prop}
	\label{prop:simplification-geometric-formula}
	Let $g,l,k$ and $n$ be nonnegative integers such that $l\leq g$. Let $d_{1},\dots,d_{n}$ be nonnegative integers and let $d:=\sum d_i$. The correlator
$
\left\langle \tau_{0,1}\tau_{d_{1},\alpha_1}\cdots\tau_{d_{n},\alpha_n}\right\rangle _{l,g-l;k}
$
is given by extracting the coefficient of $a_1^{d_1}\cdots a_n^{d_n}$ in 
%
\begin{align}
\sum_{\substack{n_v,c_{1},\dots,c_{n_v}\\
1\leq n_v\leq n\\
c_{1},\dots,c_{n_v}\geq0\\
c_{1}+\cdots+c_{n_v}=d-m+(n-n_v)
}
}\sum_{\substack{\Gamma\in\mathrm{ASG}_{g,n+1}^{n_v}\\
a\in\mathrm{AD}(\Gamma,a_{1},\dots,a_{n})
}
}\frac{1}{|\mathrm{Aut}(\Gamma)|} & \times \label{eq:formula-before-the-end}\\
\times\int_{{\rm DR}_{\Gamma}\left(a_{1},\dots,a_{n},-\sum a_{i}\right)} & \lambda_{l}c_{g,n;k}\left(\otimes_{i=1}^{n_v}v_{\alpha_{i}}\otimes v_{1}\right)\prod_{i=1}^{n_v}\left(a_{\min L(v_{i})}\psi_{\min L\left(v_{i}\right)}\right)^{c_{i}}, \nonumber
\end{align}

where $m=d+l+k-2g+1$.
\end{prop}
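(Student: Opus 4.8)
The plan is to start from the geometric formula of \Cref{prop:geometric-formula-correlators}, which expresses $\langle \tau_{0,1}\tau_{d_1,\alpha_1}\cdots\tau_{d_n,\alpha_n}\rangle_{l,g-l;k}$ as a sum over $\Gamma\in\mathrm{AMSG}_{g,m+1}^{n}$ with admissible decorations, and to remove the $m$ free leaves $\ell_1,\dots,\ell_m$ (those carrying the $v_1$ insertion and the variables $a_1,\dots,a_m$) by iterated push-forward along the forgetful maps. The key input is the push-forward property of the DR cycles, which in the form of \Cref{cor: push forward DR numbers} says that forgetting a leaf of weight $b$ produces a $\psi$-class with the weight $b$ absorbed into it, up to $O(b^2)$ terms; since after extracting $\mathrm{Coeff}_{a_1a_2\cdots a_m}$ each free weight appears only linearly, all the $O(b^2)$ corrections are killed. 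Concretely, I would forget the free leaves one at a time: each free leaf $\ell_j$ on a vertex $v$ contributes, after push-forward, a factor $a_j\psi_{?}$ where the $\psi$-class sits at one of the remaining special leaves of $v$ (for a stability reason this must be a leaf, not a half-edge, since only the leaf insertion is $v_1$, exactly as in the proof of \Cref{eq: quadratic term}).

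The second ingredient is the relabelling/reorganisation of the resulting graphs. After forgetting all free leaves, a graph in $\mathrm{AMSG}_{g,m+1}^{n}$ that had some vertices carrying only free leaves and $\ell_i$'s collapses: vertices whose only special leaf was an $\ell_i$ with $i\le n_v$ are kept, but in the new bookkeeping we only record the $n_v\le n$ vertices that survive, each carrying the insertion $v_{\alpha_i}$ at the leaf $\ell_i$ of smallest index attached to it, plus the insertion $v_1$ at $\ell_{n+1}$ on $v_1$. This is precisely the combinatorial data defining $\mathrm{ASG}_{g,n+1}^{n_v}$: the condition $\min L(v_i)<\min L(v_j)$ for $i<j$ encodes the canonical ordering, and the admissibility condition and the orientation of edges are inherited verbatim from $\mathrm{AMSG}$. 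I would check that the $1/m!$ and the $1/|\mathrm{Aut}(\Gamma)|$ factors recombine correctly: summing over the $m!$ orderings of the free leaves and over the ways to distribute them among vertices, against the automorphisms of the modified graph, yields exactly $1/|\mathrm{Aut}(\Gamma)|$ for the stable graph $\Gamma\in\mathrm{ASG}_{g,n+1}^{n_v}$ together with the multinomial that produces the single monomial $\prod_i (a_{\min L(v_i)}\psi_{\min L(v_i)})^{c_i}$, with $c_i$ the number of free leaves that landed on $v_i$; the constraint $\sum c_i = m - n_v + \#\{\text{free leaves absorbed as }\psi\}$... — more precisely $\sum c_i + (\text{$\psi$-powers }d_i\text{ already present}) $ must match the degree count, giving $c_1+\cdots+c_{n_v}=d-m+(n-n_v)$ after accounting for the $n$ leaves $\ell_{m+1},\dots,\ell_{m+n}$ carrying $\psi^{d_i}$ and the fact that forgetting $m$ free leaves decreases $n_v$-worth of marked points.

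The main obstacle will be the careful bookkeeping in this last step: tracking which $\psi$-class each forgotten free leaf contributes to, verifying that the combinatorial factors $1/m!$, $1/|\mathrm{Aut}|$ and the multinomial coefficients from the repeated applications of $\mathrm{Coeff}$ and of \Cref{eq:Obs1}-type identities all conspire to give the clean form \eqref{eq:formula-before-the-end}, and confirming that no free leaf can be absorbed into an edge half-edge (which would break the identification with $\mathrm{ASG}$) — this uses that the CohFT insertion at free leaves and at $\ell_{n+1}$ is the unit $v_1$, so the relevant pull-back/forgetful vanishings apply. Once the dictionary between forgetting free leaves and creating $a_i\psi_i$ factors is set up, together with the degree constraint fixing $m = d+l+k-2g+1$ as in \Cref{prop:geometric-formula-correlators}, the formula follows by direct substitution.
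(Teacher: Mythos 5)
Your plan is correct and coincides with the paper's own proof strategy: the paper likewise deduces \Cref{prop:simplification-geometric-formula} from \Cref{prop:geometric-formula-correlators} by iterated application of the push-forward property of DR cycles (\Cref{cor: push forward DR numbers}) to forget the free leaves, with the combinatorial regrouping of $\mathrm{AMSG}$ into $\mathrm{ASG}$ graphs carried out exactly as in the tree case of \cite[Section~6.5.2]{BDGR20}, generalised by replacing trees with graphs and $\lambda_g$ with the full Hodge class. The bookkeeping details you flag as the main obstacle are precisely the ones the paper delegates to \emph{op.~cit.}, so no new ingredient is missing from your outline.
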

\begin{proof}
	The proof of \Cref{prop:simplification-geometric-formula} follows from \Cref{prop:geometric-formula-correlators} after applying multiple times the push-forward property of DR cycles, forgetting each free leaf. The argument  is a straightforward generalisation of the argument of \cite[Section~6.5.2]{BDGR20} by replacing trees by graphs (more precisely we replace the set $AMST_{g,m+1}^{n_v}$ of admissible modified stable trees by the set $AMSG_{g,m+1}^{n_v}$ of admissible modified stable graphs, and we replace the set $AST_{g,n+1}^{n_v}$ of admissible stable trees by the set $ASG_{g,n+1}^{n_v}$ of admissible stable graphs), and by replacing the class $\lambda_{g}$ by the full Hodge class. This generalisation does not introduce new technical difficulties because the argument in \cite{BDGR20} relies solely on applying the push-forward property of the DR cycle (\Cref{cor: push forward DR numbers}) on each vertex of the graph, which, in their context, is applied to the moduli space of curves of compact type.
\end{proof}

\medskip

\subsection{Step 4: final relation}\label{sec:final-relation}

Next, we need a generalization of the relation given in~\cite[Section 6.5.3]{BDGR20}, combined with the argument in~\cite[Proof of Lemma~2.2]{DRDZ}.
We first introduce further notations.  For each vertex $v_i$ of a graph $\Gamma \in ASG^{n_v}_{n+1}$, we associate the class 
\begin{align}
	DR(v_i)\coloneqq DR_{g(v_i)}(\{a(h), h\in H(v_i)\}) \in R^g(\oM_{g(v_i),|H(v_i)|}).	
\end{align}
We have a freedom how to locally order the half-edges in $H(v_i)$; we assume the convention that the leaf $\ell_{\min L(v_i)}\in H(v_i)$ corresponds to the first marked point in $\oM_{g(v_i),|H(v_i)|}$.

\begin{prop} \label{prop:graph-dr-rubber}
	For any $\tilde{d}\geq 0$ we have:
	\begin{align} \label{eq:tilde-psi-0-dr-cycles}
		& s_* \left(\tilde\psi_0^{\tilde{d}} \left[\oM_{g}^{\sim}\left(\mathbb{P}^1,a_1,\dots,a_n,-\suma \right)\right]^{\mathrm{vir}}\right) 
		\\ \notag &
		= \sum_{\substack{n_v, c_1,\dots,c_{n_v}\\ 1\leq n_v\leq n \\ c_1,\dots,c_{n_v} \geq 0 \\ n_v-1+c_1+\cdots+c_{n_v}=\tilde{d}}} \sum_{\substack{\Gamma\in \mathrm{ASG}_{g,n+1}^{n_v} \\ a\in \mathrm{AD}(\Gamma,a_1,\dots,a_n)}} \frac{1}{|\mathrm{Aut}(\Gamma)|}\prod_{e\in E(\Gamma)} a(t(e)) (\mathsf{b}_\Gamma)_* \bigotimes_{i=1}^{n_v} DR(v_i) (a_{\min L(v_i)}\psi_1)^{c_i}
	\end{align}
\end{prop}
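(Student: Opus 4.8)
The plan is to prove the identity by induction on $\tilde d$, adapting the argument of~\cite[Section~6.5.3]{BDGR20} together with~\cite[Proof of Lemma~2.2]{DRDZ}. Those works establish the analogous statement over the locus of compact type curves; the only new ingredient needed to run the same argument over the whole moduli space $\oM_{g,n+1}$ is the push-forward property of DR cycles proved in \Cref{cor: push forward DR numbers}, which in the compact type setting was~\cite[Lemma~5.1]{BDGR1}. For the base case $\tilde d=0$ the constraint $n_v-1+c_1+\cdots+c_{n_v}=0$ forces $n_v=1$ and $c_1=0$, so the right-hand side collapses to the single-vertex graph and the claim reduces to $s_*[\oM_{g}^{\sim}(\mathbb{P}^1,a_1,\dots,a_n,-\suma)]^{\mathrm{vir}}={\rm DR}_g(a_1,\dots,a_n,-\suma)$, the standard identification of the double ramification cycle with the push-forward of the virtual class of the rubber space of stable maps to $\mathbb{P}^1$ (see~\cite{JPPZ} and the references therein).

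For the inductive step I would use the rubber calculus on the target. The class $\tilde\psi_0=t^*\psi_0$ is pulled back along $t$ from the Losev--Manin space, and intersecting the standard expression of $\psi_0\cap[\,\cdot\,]^{\mathrm{vir}}$ through target degenerations with $[\oM_{g}^{\sim}(\mathbb{P}^1,a_1,\dots,a_n,-\suma)]^{\mathrm{vir}}$ writes $\tilde\psi_0\cap[\oM_{g}^{\sim}(\mathbb{P}^1,a_1,\dots,a_n,-\suma)]^{\mathrm{vir}}$ as a sum, over boundary strata where the target chain acquires one more component at the end carrying $0$, of the gluing of the virtual classes of the two resulting rubber spaces. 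By the splitting formula for DR/rubber invariants, each summand is a boundary push-forward of a product over the two levels, with the edge created at the new target node decorated by its ramification profile $\rho$ and weighted by $\prod_i\rho_i/|\mathrm{Aut}(\rho)|$; the level carrying $\infty$ (hence the leaf $\ell_{n+1}$) is again a rubber space of the shape covered by the statement --- a multiset of positive integers over $0$ and the single part $\suma$ over $\infty$ --- so the induction hypothesis applies to it, while the level just peeled off, being a minimal rubber, contributes its DR cycle by the base case. The bookkeeping of how the remaining power $\tilde\psi_0^{\tilde d-1}$ is distributed among the levels --- producing precisely the factors $(a_{\min L(v_i)}\psi_1)^{c_i}$ with $n_v-1+\sum_i c_i=\tilde d$, the summand $n_v-1$ accounting for the one power of $\tilde\psi_0$ spent each time a new level is created --- is exactly what is carried out in~\cite[Proof of Lemma~2.2]{DRDZ}; along the way one also uses the comparison between $\tilde\psi_0$ restricted to a level and the $\psi$-class at the first leaf of that level, which again rests on \Cref{cor: push forward DR numbers}.

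Collecting all strata produced by the recursion yields exactly the sum over pairs $(\Gamma,a)$ with $\Gamma\in\mathrm{ASG}_{g,n+1}^{n_v}$ and $a\in\mathrm{AD}(\Gamma,a_1,\dots,a_n)$: the ordered vertices record the levels of the target chain from $\infty$ (which is $v_1$, carrying $\ell_{n+1}$) toward $0$; the ordering condition $\min L(v_i)<\min L(v_j)$ for $i<j$ reflects the canonical labelling of those levels; the edges, oriented toward the higher index, record the source nodes lying over the new target nodes, with $a(t(e))=-a(h(e))>0$ the ramification orders there, whence the factor $\prod_{e\in E(\Gamma)}a(t(e))$; the factor $1/|\mathrm{Aut}(\Gamma)|$ absorbs the $|\mathrm{Aut}(\rho)|$'s of the splitting formula; and the residual $\psi$-insertions land on $\ell_{\min L(v_i)}$ with total degree $\tilde d-(n_v-1)=\sum_i c_i$. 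I expect the main difficulty to be this last combinatorial reorganization --- verifying that the iterated rubber-calculus recursion assembles, with the correct rational coefficients, into the stated flat sum over admissible stable graphs with exactly the weights $(a_{\min L(v_i)}\psi_1)^{c_i}$ --- together with checking that every auxiliary identity invoked (the target-degeneration relation for $\psi_0$, the $\tilde\psi_0$-versus-$\psi_1$ comparison on a level, and the relevant divisibility statements) remains valid over the whole of $\oM_{g,n+1}$ and not just over compact type curves; this is precisely where \Cref{cor: push forward DR numbers}, proved in \Cref{sec:push-forward-DR}, is used, and it is the only point at which the present argument goes beyond the treatment of~\cite{BDGR20,DRDZ}.
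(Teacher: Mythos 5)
Your overall plan (expand the powers of $\tilde\psi_0$ through degenerations of the target and reorganize the resulting strata into the graph sum) is in the spirit of the paper's argument, but the identity you build the induction on is not correct, and the mechanism that actually produces the decorations is missing. The sum over boundary strata where the target splits at the $0$ end, weighted only by $\prod_i\rho_i/|\mathrm{Aut}(\rho)|$, is Jun Li's degeneration formula for the pull-back under $t$ of the total boundary divisor $\Delta$ of the Losev--Manin space capped with the virtual class; since $\psi_0+\psi_\infty=\Delta$ there, this computes $(\tilde\psi_0+\tilde\psi_\infty)\cap[\,\cdot\,]^{\mathrm{vir}}$, not $\tilde\psi_0\cap[\,\cdot\,]^{\mathrm{vir}}$. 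The relation the paper actually uses is the comparison from~\cite{BSSZ}, $\tilde\psi_0=a_i\,s^*\psi_i+D_i$ for a marked point $x_i$ over $0$, where $D_i$ consists only of the divisors on which the source component carrying $x_i$ is an unstable genus-$0$ component; it is applied iteratively with $i$ chosen at each step as the minimal-index marked point lying on a stable component of the current level, and it is precisely the terms $a_i s^*\psi_i$ that generate the factors $(a_{\min L(v_i)}\psi_1)^{c_i}$. This comparison is a geometric statement about rubber maps and has nothing to do with \Cref{cor: push forward DR numbers}, which is in fact not used in the proof of this proposition at all, so the ``only new ingredient'' you identify is not the relevant one.

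There are two further gaps. First, your induction is structurally inconsistent: on a stratum where the target has split at the $0$ end, $\tilde\psi_0$ restricts to the cotangent line at $0$ of the component just peeled off, not to the relative divisor of the $\infty$-side level, so the residual power $\tilde\psi_0^{\tilde d-1}$ cannot be handed to the induction hypothesis on the $\infty$-side factor as you claim; nor is the peeled-off level a ``minimal rubber'' contributing a bare DR cycle --- in general every level has arbitrary genus and acquires its own $\psi$-decoration. Second, nothing in your argument reduces to configurations with exactly one stable source component over each target component, which is what makes each level a single vertex decorated by $DR(v_i)$; in the paper this reduction follows from~\cite[Lemma~2.3]{BSSZ} and dimension reasons after applying $s_*$. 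Deferring the remaining bookkeeping to~\cite[Section~6.5.3]{BDGR20} and~\cite[Lemma~2.2]{DRDZ} does not close these gaps: those arguments treat the $\lambda_g$-capped, compact-type (tree) situation that the present proposition is designed to generalize --- the paper even notes that its proof yields a new proof of those statements rather than relying on them.
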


\begin{proof} We use the following formula from~\cite{BSSZ}:
\begin{align}\label{eq:tilde-psi-0}
	\tilde{\psi}_0 = a_i s^*\psi_i + D_i,
\end{align}
where $D_i$ is the sum of all irreducible divisors supported on the maps to the degenerate target such the irreducible component of the source curve that contains the $i$-th marked point is the unstable curve of genus $0$, cf.~\cite[first displayed formula in Section 2.2.6 and Lemma 2.6]{BSSZ}. 
This formula holds for any $i=1,\dots,n$.

To prove \Cref{eq:tilde-psi-0-dr-cycles}, we inductively compute each of the $\tilde{d}$ classes $\tilde{\psi}_{0}$ in
\begin{align}
\tilde{\psi}_{0}^{\tilde{d}}\left[\overline{\mathcal{M}}_{g}^{\sim}\left(\mathbb{P}^{1},a_{1},\dots,a_{n},-\suma\right)\right]^{vir}
\end{align}
 using \Cref{eq:tilde-psi-0}. In order to further simplify the computation we note that since we apply $s_*$ at the end, by \cite[Lemma~2.3]{BSSZ}, we can disregard all irreducible decorated strata in the space of the rubber maps, where the preimage of one irreducible component of the target curve has more than one stable components of the source curve. In practice this means that at each step of the computation, we can disregard the strata with more than one stable component in the preimage of each irreducible component of the target, except the one containing $0$.

 We start by describing the two first steps of our algorithm, before describing the general procedure. To compute the first power of $\tilde{\psi}_{0}$ we use \Cref{eq:tilde-psi-0} at the point $1$, we get 
\begin{align} \label{eq:psi-tilde-evaluated}
	\tilde{\psi}_{0}\left[\overline{\mathcal{M}}_{g}^{\sim}\left(\mathbb{P}^{1},a_{1},\dots,a_{n},-\suma\right)\right]^{vir}=a_{1}s^{*}\psi_{1}+D_{1}.
\end{align}
Next, to compute the second power $\tilde{\psi}_{0}$, we deal the with two summands separately. The first summand, $\tilde{\psi}_{0}\left(a_{1}s^{*}\psi_{1}\right)$, is computed again by using \Cref{eq:tilde-psi-0} at point $1$. Now recall that $D_{1}$ is a weighted sum over the following irreducible divisors: 
\begin{align}
\label{eq:irred-divisor}
\left[\overline{\mathcal{M}}_{g_{1}}^{\sim}\left(\mathbb{P}^{1},a_{I},-k_{1},\dots,-k_{p}\right)\right]^{vir}\boxtimes\left[\overline{\mathcal{M}}_{g_{2}}^{\sim}\left(\mathbb{P}^{1},a_{1},a_{J},k_{1},\dots,k_{p},-\suma\right)\right]^{vir}
\end{align}
for some $I\sqcup J=\left\{ 2,3,\dots,n\right\} $, $g_{1},g_{2}\geq0$ and $k_{1},\dots,k_{p}>0$ such that $g_{1}+g_{2}+p-1=g$. For each irreducible divisor, we use \Cref{eq:tilde-psi-0} at the point $i:=\min I$.  We now explain how to evaluate the $t$-th power of $\tilde{\psi}_{0}$ for $t=1,\dots,d$ in our algorithm. It will become clear that
\begin{align}
\tilde{\psi}_{0}^{t-1} \left[\overline{\mathcal{M}}_{g}^{\sim}\left(\mathbb{P}^{1},a_{1},\dots,a_{n},-\suma\right)\right]^{vir} =\sum_{X\in\mathcal{X}_{t-1}}\left[X\right]^{vir}
\end{align}
where $\mathcal{X}_{t}$ is a subset of the set of irreducible stratum in $\overline{\mathcal{M}}_{g}^{\sim}\left(\mathbb{P}^{1},a_{1},\dots,a_{n},-\suma\right)$ decorated by $\psi$-classes such that:
\begin{itemize}
\item the target curve has $t_{\nod}\geq0$ nodes,
\item each stratum is decorated with $t_{\psi}\geq0$ $\psi$-classes chosen among $s^*(\psi_1),\dots,s^*(\psi_n)$, 
\item we have $t_{\psi}+t_{\nod}=t-1$.
\end{itemize}
We choose to compute 
\begin{align}
\tilde{\psi}_{0}\cdot\left[X\right]^{vir},\qquad{\rm for}\;X\in\mathcal{X}_{t-1},
\end{align}
using \Cref{eq:tilde-psi-0} by setting 
\begin{align}
i_{t,X}:=\min\Stab_{t,X},
\end{align}
where $\Stab_{t,X}\subseteq\{1,\dots,n\}$ is the subset of indices corresponding to the preimages of $0$ that lie on stable components of the source curve (recall that the source curve is pre-stable).

To give an example, in the first step the set $\mathcal{X}_0$ contains just one decorated stratum, which is the whole space of rubber maps, and for this decorated stratum we choose $i_{1,X}=1$. For the second step, the set $\mathcal{X}_1$ contains the irreducible stratum (\ref{eq:irred-divisor}) as well as the whole space of rubber maps decorated by $a_1 s^* \psi_1$. 

\smallskip

We now use our algorithm to prove \Cref{eq:tilde-psi-0-dr-cycles}. The right hand side of~\eqref{eq:tilde-psi-0-dr-cycles} is just the bookkeeping device for all choices that we make in this procedure. To see this, we list the dictionary to match the possible choices throughout the computation and the data on the right hand side of~\eqref{eq:tilde-psi-0-dr-cycles}:
\begin{enumerate}
	\item The data $(n_v,c_1,\dots,c_{n_v})$ means that we choose the first summand in~\eqref{eq:tilde-psi-0} $c_1$ times (for the index $i=1$), then once the second summand (for $i=1$), then $c_2$ times the first summand (for a new index $i\not=1$), then once the second summand (for the same index and the preceding $c_2$ times), and so on. 
	\item The procedure above produces decorated strata of rubber maps to the $n_v$-component target curve $C_{n_v}\cup C_{n_v-1}\cup \cdots \cup C_1$, where $0\in C_{n_v}$, $\infty\in C_1$. For dimensional reasons the map $s_*$ is non-trivial only on those of the decorated strata that have exactly one stable component of the source curve over each component of the target curve. The vertex $v_i$ corresponds to the stable component of the source curve over $C_i$.
	\item The points $x_1,\dots,x_n$ correspond to the leaves $\ell_1,\dots,\ell_n$ correspond to the preimages of the multiplicity $a_1,\dots,a_n$ over $0$; the point $x_{n+1}$ corresponds to the leaf $\ell_{n+1}$ corresponds to the only preimage of $\infty$. 
	\item The $DR(v_i)$ emerge naturally from the stable components over $C_i$ under $s_*$, and they are equipped by $ (a_{\min L(v_i)}\psi_1)^{c_i}$ by our choices in the computation of $\tilde\psi_0^d$ above. 
	\item Each edge in $\Gamma$ that connects $v_j$ and $v_{j+k}$, $k\geq 1$, corresponds to a connected sequences of the unstable components of the source curves mapped to $C_{j+1}\cup C_{j+2}\cup \cdots \cup C_{j+k-1}$ (or just the preimages of the node if $k=1$).
	\item The coefficient $\frac{1}{|\mathrm{Aut}(\Gamma)|}\prod_{e\in E(\Gamma)} a(t(e))$ comes from Jun Li's degeneration formula~\cite{JunLi}, cf.~\cite[Proof of Lemma 2.6]{BSSZ}.
	\item Over the first component of the target curve $C_1$, the source curve has a unique component and the map has a total ramification of order $\suma$ over $\infty$. Therefore, the stable component over $C_i$ is connected via a node or a sequence of unstable components to a stable component over $C_j$ for some $1\leq j <i$. Thus, the associated graph is admissible. 
\end{enumerate}

With this dictionary we see that the right hand side of~\eqref{eq:tilde-psi-0-dr-cycles} is just a way to list all choices and all emerging decorated strata in the course of this algorithm to compute $s_* (\tilde\psi_0^{\tilde{d}} [\oM_{g}^{\sim}(\mathbb{P}^1,a_1,\dots,a_n,-\suma)]^{\mathrm{vir}}) $. This completes the proof of the proposition. 
\end{proof}

\begin{rem} Note that one can run the same argument for the computation of 
\begin{align}
			& s_* \left(\tilde\psi_0^{\tilde{d}} \lambda_g  \left[\oM_{g}^{\sim}\left(\mathbb{P}^1,a_1,\dots,a_n,-\suma\right)\right]^{\mathrm{vir}}\right), 
\end{align}
where $\lambda_g$ is lifted from the source curve. Then the only contributing graphs are the trees, and $\lambda_{g(v_i)}$ is present as an extra factor for each vertex $v_i$, $i=1,\dots,m$. This would give a new direct proof of~\cite[Lemma 2.2]{DRDZ}, bypassing the argument in op. cit. and in~\cite[Section 6.5.3]{BDGR20}.
\end{rem}


Assembling \Cref{eq:csq-string-eq} with the geometric formula for the correlator given in  \Cref{prop:simplification-geometric-formula}, and further simplifying by relation (\ref{eq:tilde-psi-0-dr-cycles}) for $\tilde{d}=2g-2+n-l-k$ gives 

\begin{align}
&\left\langle \tau_{d_{1},\alpha_{1}}\cdots\tau_{d_{n},\alpha_{n}}\right\rangle _{l,g-l;k}  =\\
 & \qquad \Coeff_{a_{1}^{d_{1}}\cdots a_{n}^{d_{n}}}\frac{1}{\suma}\int_{\overline{\mathcal{M}}_{g,n}}\pi_{*}\left(s_{*}\left(\lambda_{l}\tilde{\psi}_{0}^{\tilde{d}}\left[\oM_{g}^{\sim}\left(\mathbb{P}^{1},a_{1},\dots,a_{n},-\suma\right)\right]^{\mathrm{vir}}\right)\right)c_{g,n;k}\left(v_{\alpha_{1}},\dots,v_{\alpha_{n}}\right),\nonumber
\end{align}
where $\pi:\overline{\mathcal{M}}_{g,n+1}\rightarrow\overline{\mathcal{M}}_{g,n}$ forgets the last marked point. This completes the proof of \Cref{thm: quantum A class}.



\bigskip

\section{Proof of the \texorpdfstring{\hhh}{Hej}-representation for the correlators}
\label{sec:ProofOf-HEJ-Expression}

In this section we prove \Cref{thm:bounded-degree}.

\subsection{New notation for classes}

We need a new notation to treat the type of stable graphs that occur  in the course of the proof. 

Let $m,n\in\ZZ_{>0}$, $g\in\ZZ$. Let $a_1,\dots,a_n,b_1,\dots,b_m\in \ZZ_{>0}$. It is convenient to use notation $\vec{a}$, $\vec{b}$ for the sequences $(a_1,\dots,a_n)$, $(b_1,\dots,b_m)$.

Consider the space $\oM_{g-m+1-g'}^{\bullet,\sim}(\mathbb{P}^1,\vec a,-\vec b )\times \oM_{g',m+1}$ realized as the space of rubber stable maps with possibly disconnected domains, whose domains are the components of pre-stable curves of genus $g$ with $n+1$ marked points, such that the points corresponding to $-b_1,\dots,-b_m$ are the nodes separating the pre-stable domain of a rubber map and a stable curve in  $\oM_{g',m+1}$.  There is a natural extended source map 
\begin{align}
	exs\colon \oM_{g-m+1-g'}^{\bullet,\sim}(\mathbb{P}^1,\vec a,-\vec b)\times \oM_{g',m+1} \to \oM_{g,n+1},
\end{align}
where the first $n$ marked points correspond to $a_1,\dots,a_n$, and the last marked point is the last marked point in $\oM_{g',m+1} $ (the first $m$ marked points of the curves in this space correspond to $-b_1,\dots,-b_m$). Note that $g''\coloneqq g-m+1-g'$ might be negative. We, however, still demand that $2g''-2+n+m>0$, and we assume $2g'-1+m>0$. 


Consider a sequence of classes $\tilde{c} = \{\tilde{c}(p)\in R^*({LM}_{p},\QQ), p\geq 1\}$. Consider also a system of classes $C=\{C_g(b_1,\dots,b_m)\in R^*(\oM_{g,m+1},\QQ)\}$ be a system of classes that depends on $g, m, \vec b$. We introduce a new notation: Let
\begin{align}
	\tilde{c} \rhd C & \coloneqq \{(\tilde{c} \rhd C )(g,n,\vec a)\}; \\ \notag 
	(\tilde{c} \rhd C )(g,n,\vec a) & \coloneqq \sum_{\substack{m, g' \\ b_1,\dots,b_m}} \frac{\prod_{i=1}^m b_i}{|\mathrm{Aut}(\vec b)|} exs_*\Big( \tilde{c}(2g''-2+n+m)[\oM_{g''}^{\bullet,\sim}(\mathbb{P}^1,\vec a,-\vec b)]^{\mathrm{vir}} \\ \notag & \qquad \qquad \qquad \qquad \qquad \otimes C_{g'}(b_1,\dots,b_m)[\oM_{g',m+1}]\Big)
\end{align}
One variation of this notation that we also use below is the following:
\begin{align}
	\tilde{c} \rhd \!\!\! \rhd & \coloneqq \{(\tilde{c} \rhd \!\!\! \rhd )(g,n,\vec a)\}; \\ \notag 
	(\tilde{c} \rhd \!\!\! \rhd )(g,n,\vec a) & \coloneqq  s_*\Big( \tilde{c}(2g-1+n)[\oM_{g}^{\sim}(\mathbb{P}^1,\vec a,-\suma)]^{\mathrm{vir}} \Big).
\end{align}

${Comp}_{g,n,\vec a}[C]$ means that we extract the $(g,n,\vec a)$ component (which belongs to $R^*(\oM_{g,n+1},\QQ)$) of the respective system of classes $C$.  	
In order to further simplify the notation, we let $[C]_{\geq}$, resp. $[C]_{<}$ the classes $C'(g,n,\vec a)= ({Comp}_{g,n,\vec a}[C])_{\geq g}$, resp. $C''(g,n,\vec a)= ({Comp}_{g,n,\vec a}[C])_{< g}$, considered as a system of classes in $R^*(\oM_{g,n+1},\QQ)$ that depends on $g, n, \vec a$. 
In particular, let $\text{\hhh}$ (respectively, $\text{\hhh}_{\geq}$, $\text{\hhh}_{<}$) denote  $\text{\hhh}_{g,n+1}(\vec a,0)$ (respectively, $\text{\hhh}_{g,n+1}(\vec a,0)_{\geq g}$, $\text{\hhh}_{g,n+1}(\vec a,0)_{< g}$) considered as a system of classes in $R^*(\oM_{g,n+1},\QQ)$ that depends on $g, n, \vec a$.


\begin{ex} \label{ex:dr-cycle}
We have this way a new notation for the $\DR$-cycles:
\begin{align}
	{Comp}_{g,n,\vec a} \left[1 \rhd \!\!\! \rhd \right] = \DR_g(a_1,\dots,a_n,-\suma).
\end{align}
More generally, the quantum $A$-class is given in terms of 
\begin{align} \label{eq:A1-class}
		A^1_{g,n}(a_1,\dots,a_n)\coloneqq {Comp}_{g,n,\vec a} \left[\frac{\hbar^g\Lambda\Big(\frac{1}{\hbar}\Big)}{1-\tilde\psi_0} \rhd \!\!\! \rhd \right] 
\end{align}
as the push-forward $A_{g,n}(a_1,\dots,a_n) =  \frac{1}{\suma} \pi_* A^1_{g,n}(a_1,\dots,a_n)$.
\end{ex}

\begin{ex} \label{ex:localization}
	We have the following relation that comes from the localization statement proven in~\cite{BLS-Omega}:
	\begin{align}
		\left[ -\text{\hhh}+\frac{1}{1+\tilde\psi_\infty}  \rhd \!\!\! \rhd  +\frac{1}{1+\tilde\psi_\infty}  \rhd \, \text{\hhh}  \right]_{\geq} = 0.
	\end{align}
	By the way, note also that $\left[\frac{1}{1+\tilde\psi_\infty}  \rhd \!\!\! \rhd\right]_{\geq}=\left[\frac{1}{1+\tilde\psi_\infty}  \rhd \!\!\! \rhd\right]$.
\end{ex}

\medskip

\subsection{The key relation} In this section we prove the following proposition that gives us the key relation for the proof of Theorem~\ref{thm:bounded-degree}:

\begin{prop} \label{prop:key-relation} We have:
\begin{align} \label{eq:key-relation}
	\left[\frac{1}{1-\tilde\psi_0} \rhd \!\!\! \rhd	\right] - [\text{\hhh}]_{\geq} + \sum_{k=1}^\infty \underbrace{\bigg[\frac{1}{1+\tilde\psi_\infty} \rhd \cdots \bigg[\frac{1}{1+\tilde\psi_\infty} \rhd}_{k\ \text{times}} [\text{\hhh}]_{<} \underbrace{\bigg]_{\geq} \cdots  \bigg]_{\geq}}_{k\ \text{times}} = 0.
\end{align}
Note that the sum over $k$ is in fact finite for each $(g,n)$ for dimensional reason.
\end{prop}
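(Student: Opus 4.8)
The plan is to combine the localization relation of \Cref{ex:localization} (the one substantive geometric input, established in \cite{BLS-Omega}) with a ``target–degeneration'' recursion for $\tfrac{1}{1-\tilde\psi_0}\rhd\!\!\!\rhd$, after which \eqref{eq:key-relation} becomes a formal geometric–series inversion. Throughout, write $\mathcal T$ for the operation $C\mapsto\big[\tfrac{1}{1+\tilde\psi_\infty}\rhd C\big]_{\ge}$ on systems of classes; then $\mathcal T^{k}([\text{\hhh}]_{<})$ is precisely the $k$-th summand in \eqref{eq:key-relation}, and the sum there is $\sum_{k\ge1}\mathcal T^{k}([\text{\hhh}]_{<})$. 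The argument has two halves: a geometric identity for $\tfrac{1}{1-\tilde\psi_0}\rhd\!\!\!\rhd$ and a purely algebraic inversion of the localization relation.

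\emph{Step 1: a target–degeneration recursion.} On $\oM_g^{\sim}(\mathbb{P}^1,\vec a,-\suma)$ the classes $\tilde\psi_0$ and $\tilde\psi_\infty$ are pulled back via $t$ from $LM_{2g-1+n}$, where $\psi_0+\psi_\infty$ equals the total boundary; hence $\tilde\psi_0+\tilde\psi_\infty=\Delta$, with $\Delta$ the sum of the divisors on which the target chain acquires a node (the ``target-side'' companion of the BSSZ relation $\tilde\psi_0=a_is^*\psi_i+D_i$ used in \Cref{prop:graph-dr-rubber}; see \cite{BSSZ}). Then $1-\tilde\psi_0=(1+\tilde\psi_\infty)-\Delta$, and isolating the leading term gives $\tfrac{1}{1-\tilde\psi_0}=\tfrac{1}{1+\tilde\psi_\infty}+\tfrac{\Delta}{(1+\tilde\psi_\infty)(1-\tilde\psi_0)}$. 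Capping with $[\oM_g^{\sim}]^{\mathrm{vir}}$ and pushing forward by $s$: the first term yields $\tfrac{1}{1+\tilde\psi_\infty}\rhd\!\!\!\rhd$; the second, after restricting each $\Delta_\sigma$ to the product of a rubber block carrying $0$ and one carrying $\infty$ (on which $\tilde\psi_0,\tilde\psi_\infty$ restrict to the respective ends), discarding strata with more than one stable source component over a target piece by \cite[Lemma~2.3]{BSSZ}, and reading off the weights $\prod_e a(t(e))$ and automorphism factors from Jun Li's degeneration formula \cite{JunLi} exactly as in \Cref{prop:graph-dr-rubber}, is recognized as $\big[\tfrac{1}{1+\tilde\psi_\infty}\rhd\big[\tfrac{1}{1-\tilde\psi_0}\rhd\!\!\!\rhd\big]\big]$. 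This gives the one-step recursion
\[
\Big[\tfrac{1}{1-\tilde\psi_0}\rhd\!\!\!\rhd\Big]\;=\;\Big[\tfrac{1}{1+\tilde\psi_\infty}\rhd\!\!\!\rhd\Big]\;+\;\Big[\tfrac{1}{1+\tilde\psi_\infty}\rhd\Big[\tfrac{1}{1-\tilde\psi_0}\rhd\!\!\!\rhd\Big]\Big].
\]
Since $s_*(\tilde\psi_0^{\,d}[\oM_g^{\sim}]^{\mathrm{vir}})\in R^{g+d}(\oM_{g,n+1})$ and every term produced above likewise sits in degree $\ge g$, the truncations $[\,\cdot\,]_{\ge}$ may be inserted freely; iterating (finitely many steps for fixed $(g,n)$) then gives $\big[\tfrac{1}{1-\tilde\psi_0}\rhd\!\!\!\rhd\big]=\sum_{k\ge0}\mathcal T^{k}\big(\big[\tfrac{1}{1+\tilde\psi_\infty}\rhd\!\!\!\rhd\big]_{\ge}\big)$.

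\emph{Step 2: inversion of the localization relation.} Rewrite \Cref{ex:localization} as $[\text{\hhh}]_{\ge}=\mathcal T(\text{\hhh})+\big[\tfrac{1}{1+\tilde\psi_\infty}\rhd\!\!\!\rhd\big]_{\ge}$; as $\rhd$ is linear in its second slot and $\text{\hhh}=[\text{\hhh}]_{\ge}+[\text{\hhh}]_{<}$, this is the fixed-point equation $[\text{\hhh}]_{\ge}=\mathcal T([\text{\hhh}]_{\ge})+R$ with $R:=\big[\tfrac{1}{1+\tilde\psi_\infty}\rhd\!\!\!\rhd\big]_{\ge}+\mathcal T([\text{\hhh}]_{<})$. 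For fixed $(g,n)$ the iteration of $\mathcal T$ terminates (each extra rubber block strictly decreases the virtual dimension available to it, which is the finiteness asserted in the statement), so $[\text{\hhh}]_{\ge}=\sum_{k\ge0}\mathcal T^{k}(R)$. Splitting $R$ and using $\mathcal T^{k}\big(\mathcal T([\text{\hhh}]_{<})\big)=\mathcal T^{k+1}([\text{\hhh}]_{<})$,
\[
[\text{\hhh}]_{\ge}\;=\;\sum_{k\ge0}\mathcal T^{k}\Big(\big[\tfrac{1}{1+\tilde\psi_\infty}\rhd\!\!\!\rhd\big]_{\ge}\Big)\;+\;\sum_{k\ge1}\mathcal T^{k}([\text{\hhh}]_{<})\;=\;\Big[\tfrac{1}{1-\tilde\psi_0}\rhd\!\!\!\rhd\Big]\;+\;\sum_{k\ge1}\mathcal T^{k}([\text{\hhh}]_{<}),
\]
the last equality being Step 1. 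Rearranging this is exactly \eqref{eq:key-relation}.

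\emph{Expected main obstacle.} The algebra in Steps 1–2 is formal; the real work is the proof of the one-step recursion, i.e.\ unwinding the boundary term $\tfrac{\Delta}{(1+\tilde\psi_\infty)(1-\tilde\psi_0)}[\oM_g^{\sim}]^{\mathrm{vir}}$ on the connected rubber space into the $\rhd$-notation --- keeping track of how $\Delta$ splits into the boundary divisors $\Delta_\sigma$, of the possibly disconnected domains and the (possibly negative) ``genus'' $g''$ absorbed by the $\oM^{\bullet,\sim}$-blocks, of the multiplicities $\prod_e a(t(e))$ and automorphism factors from \cite{JunLi}, and of the placement of the surviving $\psi_\infty$-class on the correct Losev–Manin factor. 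This is the same bookkeeping already carried out for \Cref{prop:graph-dr-rubber}, now with one more layer of nesting; I expect no genuinely new difficulty, but it is the step requiring care, together with the elementary verification of the degree-$\ge g$ support statement that glues Steps 1 and 2.
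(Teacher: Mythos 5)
Your proposal is correct and follows essentially the same route as the paper: the same two inputs (the Losev--Manin relation $\tilde\psi_0+\tilde\psi_\infty=\Delta$, translated through the degeneration formula into the one-step recursion $\big[\tfrac{1}{1-\tilde\psi_0}\rhd\!\!\!\rhd\big]=\big[\tfrac{1}{1+\tilde\psi_\infty}\rhd\!\!\!\rhd\big]+\big[\tfrac{1}{1+\tilde\psi_\infty}\rhd\tfrac{1}{1-\tilde\psi_0}\rhd\!\!\!\rhd\big]$, and the localization relation of \Cref{ex:localization}), combined with the dimensional finiteness of the nested operation. The only difference is organizational: you solve the two resulting fixed-point equations separately as finite geometric series in the operator $\mathcal{T}$ and subtract, whereas the paper shows directly that the whole left-hand side of \eqref{eq:key-relation} equals $\big[\tfrac{1}{1+\tilde\psi_\infty}\rhd(\,\cdot\,)\big]_{\geq}$ applied to itself and concludes by induction --- a cosmetic repackaging of the same argument.
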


\begin{proof}
First, recall a relation in the Losev-Manin space that reads
\begin{align}
	\frac{1}{1-\tilde \psi_0} -  \frac{1}{1+\tilde \psi_\infty} - \frac{1}{1-\tilde \psi_0} \Delta \frac{1}{1+\tilde \psi_\infty} = 0,
\end{align}
here $\Delta$ is the sum of all irreducible divisors. This relation implies the following identity:
\begin{align}
	0 & = \left[\frac{1}{1-\tilde \psi_0} \rhd \!\!\! \rhd -  \frac{1}{1+\tilde \psi_\infty} \rhd \!\!\! \rhd  - \frac{1}{1+\tilde \psi_\infty} \rhd \frac{1}{1-\tilde \psi_0}  \rhd \!\!\! \rhd \right]  
\end{align} 	
Using this identity and the one given in Example~\ref{ex:localization}, we see that 
\begin{align}
	& \left[\frac{1}{1-\tilde\psi_0} \rhd \!\!\! \rhd \right]- [\text{\hhh}]_{\geq }
	=  \left[\frac{1}{1-\tilde\psi_0} \rhd \!\!\! \rhd - \text{\hhh}\right]_{\geq } 
	= \Bigg[
	\frac{1}{1+\tilde\psi_\infty} \rhd \!\!\! \rhd  +\frac{1}{1+\tilde\psi_\infty}  \rhd \frac{1}{1-\tilde\psi_0} \rhd \!\!\! \rhd - \text{\hhh}  
	 \Bigg]_{\geq}
	\\ \notag & = \Bigg[\frac{1}{1+\tilde\psi_\infty}  \rhd \frac{1}{1-\tilde\psi_0} \rhd \!\!\! \rhd - \frac{1}{1+\tilde\psi_\infty}  \rhd \, \text{\hhh} 
	\Bigg]_{\geq} = \Bigg[\frac{1}{1+\tilde\psi_\infty}  \rhd \left( \frac{1}{1-\tilde\psi_0} \rhd \!\!\! \rhd - \text{\hhh}  \right)
	\Bigg]_{\geq}
	\\ \notag &= \Bigg[\frac{1}{1+\tilde\psi_\infty}  \rhd \left( \frac{1}{1-\tilde\psi_0} \rhd \!\!\! \rhd - [\text{\hhh} ]_{\geq}- [\text{\hhh} ]_{<} \right)
	\Bigg]_{\geq}.
\end{align}

Using this, we see that the left hand side of~\eqref{eq:key-relation} can be rewritten as
\begin{align}
		&
		\left[\frac{1}{1-\tilde\psi_0} \rhd \!\!\! \rhd	\right] - [\text{\hhh}]_{\geq} + \sum_{k=1}^\infty \underbrace{\bigg[\frac{1}{1+\tilde\psi_\infty} \rhd \cdots \bigg[\frac{1}{1+\tilde\psi_\infty} \rhd}_{k\ \text{times}} [\text{\hhh}]_{<} \underbrace{\bigg]_{\geq} \cdots  \bigg]_{\geq}}_{k\ \text{times}} 
		\\ \notag & = 
		\Bigg[\frac{1}{1+\tilde\psi_\infty}  \rhd \left( \left[\frac{1}{1-\tilde\psi_0} \rhd \!\!\! \rhd	\right] - [\text{\hhh}]_{\geq} + \sum_{k=1}^\infty \underbrace{\bigg[\frac{1}{1+\tilde\psi_\infty} \rhd \cdots \bigg[\frac{1}{1+\tilde\psi_\infty} \rhd}_{k\ \text{times}} [\text{\hhh}]_{<} \underbrace{\bigg]_{\geq} \cdots  \bigg]_{\geq}}_{k\ \text{times}} \right) \Bigg]_{\geq},
\end{align}
which proves Equation~\eqref{eq:key-relation} by induction.
\end{proof}

\medskip

\subsection{Corollaries for the intersections with the quantum \texorpdfstring{$A$}{A}-class} A direct corollary of Proposition~\ref{prop:key-relation} is the following formula for the quantum $A$-class $A_{g,n}(\vec a)$ defined in Definition~\ref{def:QuantumA}:

\begin{cor} \label{cor:difference-Aquantum-Hej} The difference
	$A_{g,n}(\vec a) - \hbar^g\Lambda\Big(\frac{1}{\hbar}\Big)(\text{\hhh}_{g,n}(\vec a))_{\geq (g-1)}$
	is equal to a tautological class obtained as a sum over rooted stable graphs where one vertex $v$ is decorated by $\hbar^{g'}\Lambda\Big(\frac{1}{\hbar}\Big) (\text{\hhh}_{g',n'}(\vec a'))_{< (g'-1)} $, where $\sum_{i=1}^{n'} a'_i = \suma$. 
\end{cor}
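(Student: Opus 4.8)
The plan is to read the statement off the key relation of \Cref{prop:key-relation}; beyond that relation the only extra inputs are a pushforward identity for the $\text{\hhh}$-classes and the multiplicativity of the Hodge class under degeneration. First, the lifted $\lambda$-classes on the rubber space are the pullbacks along $s$ of the $\lambda$-classes on $\oM_{g,n+1}$, because contracting the unstable rational components of the source of a rubber map does not change $H^{0}$ of the dualizing sheaf; hence by the projection formula
\[
A^{1}_{g,n}(\vec{a})
= s_{*}\!\left(\frac{\hbar^{g}\Lambda(\tfrac1\hbar)}{1-\tilde\psi_{0}}\,[\oM_{g}^{\sim}(\mathbb{P}^{1},\vec{a},-\suma)]^{\mathrm{vir}}\right)
= \hbar^{g}\Lambda(\tfrac1\hbar)\cdot {Comp}_{g,n,\vec{a}}\!\left[\tfrac{1}{1-\tilde\psi_{0}} \rhd \!\!\! \rhd\right],
\]
where now $\Lambda$ denotes the total Hodge class on $\oM_{g,n+1}$. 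Substituting \Cref{prop:key-relation} rewrites the right-hand side as $\hbar^{g}\Lambda(\tfrac1\hbar)\,[\text{\hhh}]_{\geq}$ minus $\hbar^{g}\Lambda(\tfrac1\hbar)$ times the (finite) sum of the nested $\tfrac{1}{1+\tilde\psi_{\infty}}\rhd$-terms ending with $[\text{\hhh}]_{<}$. Then one applies $\tfrac1\suma\,\pi_{*}$ and identifies the two contributions.

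For the main contribution I would use the identity $\pi_{*}\,\text{\hhh}_{g,n+1}(\vec{a},0)=\suma\cdot\text{\hhh}_{g,n}(\vec{a})$, which follows by combining the pullback property of the $\Omega$-classes recalled above (so that $\Omega^{[\suma]}_{g,n+1}(\suma,0;\suma-a_{1},\dots,\suma-a_{n},0)=\pi^{*}\Omega^{[\suma]}_{g,n}(\suma,0;\suma-a_{1},\dots,\suma-a_{n})$, licit since $0\leq\suma-a_{i}\leq\suma$) with the elementary computation $\pi_{*}\bigl(\prod_{i=1}^{n}(1-a_{i}\psi_{i})^{-1}\bigr)=\suma\prod_{i=1}^{n}(1-a_{i}\psi_{i})^{-1}$. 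Since $\lambda_{i}=\pi^{*}\lambda_{i}$ and $\pi_{*}$ lowers the cohomological degree by one — so that $\pi_{*}$ applied to the degree-$\geq g$ part of $\text{\hhh}_{g,n+1}(\vec{a},0)$ equals the degree-$\geq(g-1)$ part of $\suma\,\text{\hhh}_{g,n}(\vec{a})$ — the main contribution is exactly $\hbar^{g}\Lambda(\tfrac1\hbar)\bigl(\text{\hhh}_{g,n}(\vec{a})\bigr)_{\geq(g-1)}$, the factor $\suma$ cancelling $\tfrac1\suma$.

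For the error contribution I would unfold the operator notation. Each summand $\bigl[\tfrac{1}{1+\tilde\psi_{\infty}}\rhd\cdots[\tfrac{1}{1+\tilde\psi_{\infty}}\rhd[\text{\hhh}]_{<}]_{\geq}\cdots\bigr]_{\geq}$ is, by the definition of $\rhd$, by the source maps of the (possibly disconnected) rubber spaces, and by \eqref{eq:tilde-psi-0}, a boundary pushforward to $\oM_{g,n+1}$ of a tautological class in which the last marked point of $\oM_{g,n+1}$ lies on the $\oM_{g',m+1}$-factor carrying $\bigl(\text{\hhh}_{g',m+1}(\vec{b},0)\bigr)_{<g'}$, and the ramification profiles of the rubber components force $b_{1}+\dots+b_{m}=\suma$. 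Forgetting that point on that factor via $\pi_{*}$ and invoking the same pushforward identity turns the factor into $\bigl(\sum_{i}b_{i}\bigr)\bigl(\text{\hhh}_{g',m}(\vec{b})\bigr)_{<(g'-1)}$, with $\sum_{i}b_{i}=\suma$ again cancelling $\tfrac1\suma$. Finally, the total Hodge class is multiplicative under degeneration, so on a stratum with dual graph $\Gamma$ one has $\hbar^{g}\Lambda(\tfrac1\hbar)=\hbar^{h^{1}(\Gamma)}\prod_{v}\hbar^{g(v)}\Lambda(\tfrac1\hbar)$ (using $g=\sum_{v}g(v)+h^{1}(\Gamma)$ and $\Lambda^{(g)}|_{\Gamma}=\prod_{v}\Lambda^{(g(v))}$); in particular the distinguished vertex acquires precisely the factor $\hbar^{g'}\Lambda(\tfrac1\hbar)$. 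Hence the error contribution is visibly a sum over rooted stable graphs with one vertex decorated by $\hbar^{g'}\Lambda(\tfrac1\hbar)\bigl(\text{\hhh}_{g',n'}(\vec{a}')\bigr)_{<(g'-1)}$, $\sum_{i=1}^{n'}a'_{i}=\suma$, which is the assertion.

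The step that demands the most care is the unfolding in the last paragraph: checking that composing $\rhd$ with the source maps of the disconnected rubber spaces genuinely produces boundary pushforwards of tautological classes (with $\psi$-decorations controlled by \eqref{eq:tilde-psi-0}), that the forgotten marked point always sits on the $[\text{\hhh}]_{<}$-factor, and that $\pi_{*}$ commutes with the truncations $[\,\cdot\,]_{\geq}$, $[\,\cdot\,]_{<}$ up to the shift of degrees by one — which is automatic, since $(\text{\hhh})_{<g}$ has cohomological degree $<g$, so its $\pi_{*}$-image has degree $<g-1$. As in the proof of \Cref{prop:key-relation}, the low-genus and small-$n$ cases where some component fails to be stable should be treated by hand.
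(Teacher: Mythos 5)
Your proposal is correct and follows essentially the same route as the paper: multiply the key relation of \Cref{prop:key-relation} by $\frac{1}{\suma}\hbar^g\Lambda\big(\tfrac{1}{\hbar}\big)$, push forward along $\pi$, identify the main term via $\pi_*\text{\hhh}_{g,n+1}(\vec a,0)=\suma\,\text{\hhh}_{g,n}(\vec a)$ (compatible with the truncations since $\pi_*$ shifts the degree by one), and recognize the remaining nested terms as rooted stable graph contributions whose root vertex carries $[\text{\hhh}]_{<}$ together with the forgotten marked point, with the Hodge factor distributing over the vertices. The only difference is that you spell out the pushforward identity for the $\text{\hhh}$-classes (via the $\Omega$-class pullback property and the string-type computation for $\prod_i(1-a_i\psi_i)^{-1}$) and the projection-formula step for $\Lambda$ through $s_*$, which the paper asserts without proof.
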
 

\begin{proof} We just multiply Equation~\eqref{eq:key-relation} by $\frac 1\suma\hbar^g\Lambda\Big(\frac{1}{\hbar}\Big)$ and apply the push-forward. Note that these two operations commute. Note also that 
	\begin{align}
		\frac{1}{\suma} \pi_* \big(\hbar^g\Lambda\Big(\frac{1}{\hbar}\Big)(\text{\hhh}_{g,n+1}(\vec a,0))_{\geq g}\big) = \hbar^g\Lambda\Big(\frac{1}{\hbar}\Big)(\text{\hhh}_{g,n}(\vec a))_{\geq (g-1)}.
	\end{align}	
	The class
	\begin{align}
		\sum_{k=1}^\infty \underbrace{\bigg[\frac{1}{1+\tilde\psi_\infty} \rhd \cdots \bigg[\frac{1}{1+\tilde\psi_\infty} \rhd}_{k\ \text{times}} [\text{\hhh}]_{<} \underbrace{\bigg]_{\geq} \cdots  \bigg]_{\geq}}_{k\ \text{times}}
	\end{align}		 
	represented as a sum over decorated graphs has the vertex decorated by $[\text{\hhh}]_{<} = [\text{\hhh}_{g',n'+1}(\vec a',0)]_{<}$, where $\sum_{i=1}^{n'} a_i' = \suma$ and the leaf $\ell_{n+1}$ is attached to this vertex (and is labeled by $0$ in the argument of $\text{\hhh}$). Let's call it the root vertex. Note that the root vertex cannot have genus $0$. The push-forward acts only on this vertex it removing the leaf $\ell_{n+1}$ and replacing the class $(\text{\hhh}_{g',n'}(\vec a',0))_{< g'} \big)$ by $\pi_* (\text{\hhh}_{g',n'}(\vec a',0))_{< g'} = \suma (\text{\hhh}_{g',n'}(\vec a'))_{< (g'-1)}$. Multiplying the whole graph by $\frac 1\suma\hbar^g\Lambda\Big(\frac{1}{\hbar}\Big)$ we factorize $\Lambda\Big(\frac{1}{\hbar}\Big)$ over the vertices, so in particular we get such a factor for the decoration of the root vertex as well.
\end{proof}

An immediate corollary is the following one:
\begin{cor} \label{cor:VanishingExtraTermsUnderAssumption} Let $\{c_{g,n}\}$ be any CohFT such that
	\begin{align} \label{eq:assumption-CohFT-degree-hej}
		\int_{\oM_{g,n}} \hbar^g\Lambda\Big(\frac{1}{\hbar}\Big)(\text{\hhh}_{g,n}(\vec a))_{< (g-1)} c_{g,n} (v_{\alpha_1}\otimes\cdots\otimes v_{\alpha_n}) = 0 
	\end{align}
	for any $g$, $n$, $a_1,\dots,a_n$ and $\alpha_1,\dots,\alpha_n$. Then we have
	\begin{align}
		\int_{\oM_{g,n}}  A_{g,n}(\vec a) c_{g,n} (v_{\alpha_1}\otimes \cdots \otimes v_{\alpha_n}) = \int_{\oM_{g,n}}  \hbar^g\Lambda\Big(\frac{1}{\hbar}\Big) \text{\hhh}_{g,n}(\vec a) c_{g,n} (v_{\alpha_1}\otimes\cdots\otimes v_{\alpha_n}).
	\end{align}
\end{cor} 

\begin{proof} Indeed, by assumption~\eqref{eq:assumption-CohFT-degree-hej} we have
	\begin{align}
		\int_{\oM_{g,n}}  \hbar^g\Lambda\Big(\frac{1}{\hbar}\Big) \text{\hhh}_{g,n}(\vec a) c_{g,n} (v_{\alpha_1}\otimes\cdots\otimes v_{\alpha_n}) = \int_{\oM_{g,n}}  \hbar^g\Lambda\Big(\frac{1}{\hbar}\Big) (\text{\hhh}_{g,n}(\vec a))_{\geq g-1} c_{g,n} (v_{\alpha_1}\otimes\cdots\otimes v_{\alpha_n}).
	\end{align}
	Furthermore, by Corollary~\ref{cor:difference-Aquantum-Hej}  the integral of the class $A_{g,n}(\vec a) - \hbar^g\Lambda\Big(\frac{1}{\hbar}\Big) (\text{\hhh}_{g,n}(\vec a))_{\geq g-1}$ is represented as a sum of tautological classes represented by stable graphs, where at lease one vertex is decorated by $\hbar^{g'}\Lambda\Big(\frac{1}{\hbar}\Big)(\text{\hhh}_{g',n'}(\vec a'))_{< (g'-1)}$. The factorization property of cohomological field theories implies that the integral of $c_{g,n} (v_{\alpha_1}\otimes\cdots\otimes v_{\alpha_n})$ over each such summand contains a factor that vanishes by assumption~\eqref{eq:assumption-CohFT-degree-hej}.
\end{proof}

Now recall that in 
Theorem~\ref{thm:bounded-degree} we assume that the CohFT satisfies the degree condition (\ref{eq:DegreeAssumption}). Its proof is a direct application of Corollary~\ref{cor:VanishingExtraTermsUnderAssumption} using that condition.

\begin{proof}[Proof of Theorem~\ref{thm:bounded-degree}]
Fix a cohomological field theory $c=\{c_{g,n}\}$ satisfying the degree condition $\deg c_{g,n}(v_{\alpha_1}\otimes\cdots\otimes v_{\alpha_n}) < g-1+n$ for all $g$, $n$, $\alpha_1,\dots,\alpha_n$ (that is condition (\ref{eq:DegreeAssumption})). This immediately implies Equation~\eqref{eq:assumption-CohFT-degree-hej}. Indeed, note that $\deg \Lambda\Big(\frac{1}{\hbar}\Big)\leq g$, $\deg (\text{\hhh}_{g,n}(\vec a))_{< (g-1)}\leq g-2$, so if $\deg c_{g,n}(v_{\alpha_1}\otimes\cdots\otimes v_{\alpha_n}) \leq g-2+n$, then
\begin{align}
	\deg  \Lambda\Big(\frac{1}{\hbar}\Big)(\text{\hhh}_{g,n}(\vec a))_{< (g-1)} c_{g,n} (v_{\alpha_1}\otimes\cdots\otimes v_{\alpha_n}) \leq 3g-4+n < \dim \oM_{g,n},
\end{align}
which implies~\eqref{eq:assumption-CohFT-degree-hej}. Thus we can use Corollary~\ref{cor:VanishingExtraTermsUnderAssumption}.  It implies that 
\begin{align}
	\int_{\oM_{g,n}}  A_{g,n}(\vec a) c_{g,n} (v_{\alpha_1}\otimes\cdots\otimes v_{\alpha_n}) = 
	 \int_{\oM_{g,n}}  \hbar^g\Lambda\Big(\frac{1}{\hbar}\Big) \text{\hhh}_{g,n}(\vec a) c_{g,n} (v_{\alpha_1}\otimes\cdots\otimes v_{\alpha_n}),
\end{align}
which, in particular, means that the right hand side must be polynomial in $\hbar$ and $a_1,\dots,a_n$ (since the left hand side does). Thus, by Theorem~\ref{thm: quantum A class}, we have:
\begin{align}
	\langle \tau_{d_{1},\alpha_1}\dots\tau_{d_{n},\alpha_n}\rangle_{l,g-l}
	& =
	\mathrm{\Coeff}_{\hbar^{g-l} \prod_{i=1}^n a_i^{d_i}} 
	\int_{\oM_{g,n}}  A_{g,n}(\vec a) c_{g,n} (v_{\alpha_1}\otimes\cdots\otimes v_{\alpha_n})  
	\\ \notag &
	=
	\mathrm{\Coeff}_{\hbar^{g-l} \prod_{i=1}^n a_i^{d_i}} 
	\int_{\oM_{g,n}}  \hbar^g\Lambda\Big(\frac{1}{\hbar}\Big) \text{\hhh}_{g,n}(\vec a) c_{g,n} (v_{\alpha_1}\otimes\cdots\otimes v_{\alpha_n})
	\\ \notag
	& =
	\mathrm{\Coeff}_{\prod_{i=1}^n a_i^{d_i}} 
	\int_{\oM_{g,n}}  \lambda_l \text{\hhh}_{g,n}(\vec a) c_{g,n} (v_{\alpha_1}\otimes\cdots\otimes v_{\alpha_n})
\end{align}
This completes the proof of the theorem. 
\end{proof}






\bigskip

\section{Proof of the dilaton equation}
\label{sec:ProofDilaton}

In this section we prove the dilaton equation (\Cref{thm:StringANDDilaton}). Equivalently, we prove 
\begin{equation}
\langle\tau_{1,1}\tau_{d_{1},\alpha_{1}}\cdots\tau_{d_{n},\alpha_{n}}\rangle_{l,g-l;k}=(2g-2+n)\langle\tau_{d_{1},\alpha_{1}}\cdots\tau_{d_{n},\alpha_{n}}\rangle_{l,g-l;k},\label{eq:dilaton-correlators}
\end{equation}
when $2g-2+n>0$ and 
\begin{align}
\left\langle \tau_{1,1}\tau_{d_{1},\alpha_{1}}\tau_{d_{2},\alpha_{2}}\right\rangle _{0,0;k} & =0,\label{eq:genus-0-dilaton}\\
\left\langle \tau_{1,1}\right\rangle _{1,0;k} & =\frac{N}{24}\delta_{k,0},\label{eq:genus-1-classical-dilaton}\\
\left\langle \tau_{1,1}\right\rangle _{0,1;k} & =\delta_{k,1}\int_{\overline{\mathcal{M}}_{1,1}}c_{1,1;1}\left(1\right).\label{eq:genus-1-quantum-dilaton}
\end{align}

We begin by addressing the straightforward cases. \Cref{eq:genus-0-dilaton} and \Cref{eq:genus-1-classical-dilaton} are classical correlators, their expression was already obtained in \cite[Eqs.~6.14~and~6.15]{BDGR1} . We compute $\left\langle \tau_{1,1}\right\rangle _{0,1}$ by 
\begin{align}
	\left\langle \tau_{1,1}\right\rangle _{0,1;k}=\left\langle \tau_{0,1}\tau_{2,1}\right\rangle _{0,1;k}=i^{k}{\rm Coeff}_{\hbar\mu^k}\Omega_{0,1;2,1}\vert_{u_{i}^{\alpha}=\delta_{i,1}\delta_{\alpha,1}}.
\end{align}
\Cref{lem: small lemmas string} gives $\Omega_{0,1;1,1}=H_{1,1}$, and by the evaluation lemma \cite[Lemma~6.2]{BDGR1} gives 
\begin{align}
	\left\langle \tau_{1,1}\right\rangle _{0,1;k}=i^{k+1}\sum_{m\geq0}\left(-i\right)^{m}{\rm Coeff}_{a_{1}a_{2}\cdots a_{m}}\int_{{\rm DR}_{1}\left(0,a_{1},\dots,a_{m},-\sum a_{i}\right)}\psi_{1}^{2}c_{1,m+2;k}\left(v_{1},\dots,v_{1}\right).
\end{align}
The $m>2$ terms vanish by the push-forward property of DR cycles (\Cref{cor: push forward DR numbers}). The $m=2$ term is, by the same property, given by 
\begin{align}
\delta_{k,1}{\rm Coeff}_{\suma^2}\int_{{\rm DR}_{1}\left(\suma,-\suma\right)}c_{1,2;1}\left(v_{1},v_{1}\right),
\end{align}
and this last expression equals $\int_{\overline{\mathcal{M}}_{1,1}}c_{1,1}\left(v_{1}\right)$ by \cite[Lemma~5.4]{BDGR1}. The $m=1$ term vanishes because the integral is an even polynomial in $a_{1}$, thus it has no linear term. Finally the $m=0$ vanishes by a direct dimensional counting. 

Now, \Cref{eq:dilaton-correlators} holds by definition when $n=0$. Suppose, \Cref{eq:dilaton-correlators} holds for $n\geq2$, we get using the string equation 
\begin{align}
\left\langle \tau_{1,1}\tau_{d,\alpha}\right\rangle _{l,g-l;k} & =\left\langle \tau_{0,1}\tau_{1,1}\tau_{d+1,\alpha}\right\rangle _{l,g-l;k}-\left\langle \tau_{0,1}\tau_{d+1,\alpha}\right\rangle _{l,g-l;k} \nonumber\\
 & =2g\left\langle \tau_{0,1}\tau_{d+1,\alpha}\right\rangle _{l,g-l;k}-\left\langle \tau_{0,1}\tau_{d+1,\alpha}\right\rangle _{l,g-l;k}\\
 & =\left(2g-1\right)\left\langle \tau_{0,1}\tau_{d+1,\alpha}\right\rangle _{l,g-l;k}.\nonumber
\end{align}

Therefore, we are left with the proof of \Cref{eq:dilaton-correlators} for $n\geq2$ that we address in the rest of the section. 

\medskip

\subsection{Reduction of dilation using string}
\label{sec:ProofDilaton-String}

The first statement of \Cref{thm:StringANDDilaton}, the string equation, is proven in~\Cref{sec:proof-string}. The second statement of \Cref{thm:StringANDDilaton}, the dilaton equation, is a corollary of \Cref{thm: quantum A class}, though it requires some computation. To this end, we can reformulate the dilaton equation as 
\begin{align}
	& \mathrm{\Coeff}_{b^1}\int_{\overline{\mathcal{M}}_{g,n+1}}A_{g,n+1}(a_1,\dots,a_n,b) c_{g,n+1}^{[\mu]}(v_{\alpha_1} \otimes \dots \otimes v_{\alpha_n}\otimes v_1) 
	\\ \notag & = 
	(2g-2+n)\int_{\overline{\mathcal{M}}_{g,n}}A_{g,n}(a_1,\dots,a_n) c_{g,n}^{[\mu]}(v_{\alpha_1} \otimes \dots \otimes v_{\alpha_n}).
\end{align}

However, it is easier to use not the quantum $A$-class, by the class $A^{1}_{g,n}$ defined in~\eqref{eq:A1-class}. Indeed, once we have established the string equation, the dilaton one is a formal corollary of its special case given by
\begin{align}
\langle \tau_{d_{1},\alpha_1}\dots\tau_{d_{n},\alpha_n}\tau_{1,1}\tau_{0,1}\rangle_{l,g-l;k}
= (2g-1+n) \langle \tau_{d_{1},\alpha_1}\dots\tau_{d_{n},\alpha_n}\tau_{0,1}\rangle_{l,g-l;k}.
\end{align}
This special case of the dilaton equation we can reformulate as 
\begin{align} \label{eq:Dilaton-Reduced-A1}
	& \mathrm{\Coeff}_{b^1}\int_{\overline{\mathcal{M}}_{g,n+2}}A^1_{g,n+1}(a_1,\dots,a_n,b) c_{g,n+2}^{[\mu]}(v_{\alpha_1} \otimes \dots \otimes v_{\alpha_n}\otimes v_1\otimes v_1) 
	\\ \notag & = 
	(2g-1+n)\int_{\overline{\mathcal{M}}_{g,n}}A^1_{g,n}(a_1,\dots,a_n) c_{g,n+1}^{[\mu]}(v_{\alpha_1} \otimes \dots \otimes v_{\alpha_n}\otimes v_1).
\end{align}

\medskip

\subsection{Extracting \texorpdfstring{$\tilde \psi_0$}{tilde-psi-0}} 

Using~\eqref{eq:A1-class}, the left hand side of Equation~\eqref{eq:Dilaton-Reduced-A1} can be expanded as 
\begin{align} \label{eq:Prof-Dil-Step1}
	\mathrm{\Coeff}_{b^1}\int_{\overline{\mathcal{M}}_{g,n+2}} s_* \left(\frac{\hbar^g\Lambda\Big(\frac{1}{\hbar}\Big)}{1-\tilde\psi_0} \left[\oM_{g}^{\sim}\left(\mathbb{P}^1,a_1,\dots,a_n,b,-\suma-b \right)\right]^{\mathrm{vir}}\right) c_{g,n+2}^{[\mu]}(v_{\alpha_1} \otimes \dots \otimes v_{\alpha_n}\otimes v_1\otimes v_1).
\end{align}

Let $p\colon\oM_{g,n+2} \to \oM_{g,n+1}$ be the morphism forgetting the $(n+1)$-st marked point (labeled by the multiplicity $b$ and the primary field $v_1$ in \eqref{eq:Prof-Dil-Step1}). Using the unit axiom of cohomological field theories, rewrite \eqref{eq:Prof-Dil-Step1} as 
\begin{align} \label{eq:Prof-Dil-Step2}
	& \mathrm{\Coeff}_{b^1}\int_{\overline{\mathcal{M}}_{g,n+2}}
	  s_* \left(\frac{1}{1-\tilde\psi_0} \left[\oM_{g}^{\sim}\left(\mathbb{P}^1,a_1,\dots,a_n,b,-\suma-b \right)\right]^{\mathrm{vir}}\right)
	\\ \notag  & \qquad \qquad \qquad \qquad 
	\cdot p^* c_{g,n+1}^{[\mu]}(v_{\alpha_1} \otimes \dots \otimes v_{\alpha_n}\otimes v_1) \cdot p^* \Lambda\Big(\frac{1}{\hbar}\Big)\hbar^g.
\end{align}
Furthermore, expand $(1-\tilde\psi_0)^{-1}$ as $1+ (1-\tilde\psi_0)^{-1} \tilde \psi_0$ and use Equation~\eqref{eq:tilde-psi-0} to replace the last factor of $\tilde \psi_0$ by $b\cdot s^*\psi_{n+1} + D_{n+1}$. We see that~\eqref{eq:Prof-Dil-Step2} is equal to:
\begin{align} \label{eq:Prof-Dil-Step3}
	& \mathrm{\Coeff}_{b^1}\int_{\overline{\mathcal{M}}_{g,n+2}} s_* \left( \left[\oM_{g}^{\sim}\left(\mathbb{P}^1,a_1,\dots,a_n,b,-\suma-b \right)\right]^{\mathrm{vir}}\right)
	\\ \notag &
	\qquad \qquad \qquad \qquad 
	\cdot p^* c_{g,n+1}^{[\mu]}(v_{\alpha_1} \otimes \dots \otimes v_{\alpha_n}\otimes v_1) \cdot p^* \Lambda\Big(\frac{1}{\hbar}\Big)\hbar^g
	\\ \notag &
	+ \mathrm{\Coeff}_{b^1}\int_{\overline{\mathcal{M}}_{g,n+2}}  s_* \left( \frac{1}{1-\tilde\psi_0} \left(b\cdot s^*\psi_{n+1} \right) \left[\oM_{g}^{\sim}\left(\mathbb{P}^1,a_1,\dots,a_n,b,-\suma-b \right)\right]^{\mathrm{vir}}\right)
	\\ \notag & 
	\qquad \qquad \qquad \qquad 
	\cdot p^* c_{g,n+1}^{[\mu]}(v_{\alpha_1} \otimes \dots \otimes v_{\alpha_n}\otimes v_1) \cdot p^* \Lambda\Big(\frac{1}{\hbar}\Big)\hbar^g
	\\ \notag &
	+ \mathrm{\Coeff}_{b^1}\int_{\overline{\mathcal{M}}_{g,n+2}}  s_* \left( \frac{1}{1-\tilde\psi_0} \left( D_{n+1}\right) \left[\oM_{g}^{\sim}\left(\mathbb{P}^1,a_1,\dots,a_n,b,-\suma-b \right)\right]^{\mathrm{vir}}\right)
	\\ \notag & 
	\qquad \qquad \qquad \qquad 
	\cdot p^* c_{g,n+1}^{[\mu]}(v_{\alpha_1} \otimes \dots \otimes v_{\alpha_n}\otimes v_1) \cdot p^* \Lambda\Big(\frac{1}{\hbar}\Big)\hbar^g.	
\end{align}
We analyze the three summands in~\eqref{eq:Prof-Dil-Step3} separately in the next subsections. 

\medskip

\subsection{Vanishing terms}
The first summand in~\eqref{eq:Prof-Dil-Step3} can be rewritten using the definition of the double ramification cycle and the projection formula as 
\begin{align} \label{eq:Prof-Dil-Step4}
	& \mathrm{\Coeff}_{b^1}\int_{\overline{\mathcal{M}}_{g,n+1}}  p_*
	\DR_g \left(a_1,\dots,a_n,b,-\suma-b \right)
	\cdot c_{g,n+1}^{[\mu]}(v_{\alpha_1} \otimes \dots \otimes v_{\alpha_n}\otimes v_1) \cdot  \Lambda\Big(\frac{1}{\hbar}\Big)\hbar^g
\end{align}
By \Cref{prop:DR-pushforward-divisible-bb} the latter expression is equal to $\mathrm{\Coeff}_{b^1} O(b^2) =0 $.

The third summand can be represented as
\begin{align} \label{eq:Prof-Dil-Step5}
	& \mathrm{\Coeff}_{b^1} \sum_{k=1}^\infty  \sum_{\substack{g_1\in \ZZ, g_2 \in \ZZ_{\geq 0} \\ g_1+g_2+k-1 = g  
	}} \sum_{\substack{\bar a_1,\dots,\bar a_k \geq 1 \\ \bar a_1+\cdots+\bar a_k \\ = 
			\suma
	}}  \frac{\hbar^{k-1}}{k!} \prod_{i=1}^k \eta^{\beta_i\gamma_i} \prod_{i=1}^k \bar a_i
	\\ \notag &
	\times \int_{\left[\oM_{g_1}^{\bullet,\sim}\left(\mathbb{P}^1,a_1,\dots,a_n,-\bar a_1,\dots,-\bar a_k \right)\right]^{\mathrm{vir}}}
	\frac{1}{1-\tilde\psi_0} 
	\cdot s^* c^{[\mu],\bullet}_{g_1,n+k}\left(\bigotimes_{i=1}^n v_{\alpha_i} \otimes \bigotimes_{i=1}^k v_{\beta_i}\right) \cdot s^* \Lambda^\bullet(\frac{1}{\hbar})\hbar^{g_1}
	\\ \notag &
	\times \int_{\overline{\mathcal{M}}_{g_2,k+2}}  s_* \left( \left[\oM_{g_2}^{\sim}\left(\mathbb{P}^1,\bar a_1,\dots,\bar a_k,b,-\suma-b \right)\right]^{\mathrm{vir}}\right)
	\cdot p^* c_{g_2,k+1}^{[\mu]}(v_{\gamma_1} \otimes \dots \otimes v_{\gamma_k}\otimes v_1) \cdot p^* \Lambda\Big(\frac{1}{\hbar}\Big)\hbar^{g_2}.
\end{align}
Here by $c^{[\mu],\bullet}$, $\Lambda^\bullet$ we mean the corresponding classes on moduli spaces of the not necessarily connected curves that are the source curves in $\oM_{g_1}^{\bullet,\sim}\left(\mathbb{P}^1,a_1,\dots,a_n,-\bar a_1,\dots,-\bar a_k \right)$. Note that some components of the source curves might be unstable, then we formally assign to them in this formula $c_{0,2}(v_\alpha\otimes v_\beta) \coloneqq \eta_{\alpha\beta}$. 

Now we have two different cases: $(g_2,k+2)=(0,3)$ and $(g_2,k+2)\not= (0,3)$. In the first case, both factors in~\eqref{eq:Prof-Dil-Step5} are constant in $b$, thus this term doesn't contribute to $\mathrm{\Coeff}_{b^1}$. In the second case, the first factor in~\eqref{eq:Prof-Dil-Step5} is constant in $b$ and the second factor can be rewritten as 
\begin{align} \label{eq:Prof-Dil-Step6}
& \int_{\overline{\mathcal{M}}_{g_2,k+2}}  s_* \left( \left[\oM_{g_2}^{\sim}\left(\mathbb{P}^1,\bar a_1,\dots,\bar a_k,b,-\suma-b \right)\right]^{\mathrm{vir}}\right)
	\cdot p^* c_{g_2,k+1}^{[\mu]}(v_{\gamma_1} \otimes \dots \otimes v_{\gamma_k}\otimes v_1) \cdot p^* \Lambda\Big(\frac{1}{\hbar}\Big)\hbar^{g_2}
\\ \notag 
& = \int_{\overline{\mathcal{M}}_{g_2,k+1}}  p_*
\DR_{g_2} \left(\bar a_1,\dots,\bar a_k,b,-\suma-b \right)
\cdot c_{g_2,k+1}^\mu(v_{\gamma_1} \otimes \dots \otimes v_{\gamma_k}\otimes v_1) \cdot  \Lambda\Big(\frac{1}{\hbar}\Big)\hbar^{g_2}
= O(b^2),
\end{align}
where for the second equality we use \Cref{prop:DR-pushforward-divisible-bb}. This implies that \eqref{eq:Prof-Dil-Step5} vanishes.

\medskip

\subsection{Projection formula for the \texorpdfstring{$\psi$}{psi}-class} Using the vanishing statements established in the previous subsection, we see that~\eqref{eq:Prof-Dil-Step3} is equal to 
\begin{align} \label{eq:Prof-Dil-Step7}
	& \mathrm{\Coeff}_{b^0}\int_{\overline{\mathcal{M}}_{g,n+2}}  s_* \left( \frac{1}{1-\tilde\psi_0} s^*\psi_{n+1}  \left[\oM_{g}^{\sim}\left(\mathbb{P}^1,a_1,\dots,a_n,b,-\suma-b \right)\right]^{\mathrm{vir}}\right)
	\\ \notag & 
	\qquad \qquad \qquad \qquad 
	\cdot p^* c_{g,n+1}^{[\mu]}(v_{\alpha_1} \otimes \dots \otimes v_{\alpha_n}\otimes v_1) \cdot p^* \Lambda\Big(\frac{1}{\hbar}\Big)\hbar^g
	\\ \notag & 
	=\int_{\overline{\mathcal{M}}_{g,n+2}}  s_* \left( \frac{1}{1-\tilde\psi_0} s^*\psi_{n+1}  \left[\oM_{g}^{\sim}\left(\mathbb{P}^1,a_1,\dots,a_n,0,-\suma \right)\right]^{\mathrm{vir}}\right)
	\\ \notag & 
	\qquad \qquad \qquad \qquad 
	\cdot p^* c_{g,n+1}^{[\mu]}(v_{\alpha_1} \otimes \dots \otimes v_{\alpha_n}\otimes v_1) \cdot p^* \Lambda\Big(\frac{1}{\hbar}\Big)\hbar^g.
\end{align}
In this case we can use once again that $(1-\tilde\psi_0)^{-1} = 1+ (1-\tilde\psi_0)^{-1} \tilde \psi_0$ and then use the formula for the evaluation of $\tilde \psi_0$ in the presence of a point of multiplicity $0$ (cf.~\cite[Proof of Theorem 5]{BSSZ}, more precisely, the expression for $\tilde \psi_0$ in the non-symmetrized case in~\cite[Equation (1)]{BSSZ}). We obtain that~\eqref{eq:Prof-Dil-Step7} is equal to 
\begin{align} \label{eq:Prof-Dil-Step8}
	& \int_{\overline{\mathcal{M}}_{g,n+2}}  \psi_{n+1} s_*  \left[\oM_{g}^{\sim}\left(\mathbb{P}^1,a_1,\dots,a_n,0,-\suma \right)\right]^{\mathrm{vir}}
	\cdot p^* c_{g,n+1}^{[\mu]}(v_{\alpha_1} \otimes \dots \otimes v_{\alpha_n}\otimes v_1) \cdot p^*\Lambda\Big(\frac{1}{\hbar}\Big)  \hbar^g
	\\ \notag &
	+ \sum_{k=1}^\infty  \sum_{\substack{g_1\in \ZZ, g_2 \in \ZZ_{\geq 0} \\ g_1+g_2+k-1 = g  
	}} \sum_{\substack{\bar a_1,\dots,\bar a_k \geq 1 \\ \bar a_1+\cdots+\bar a_k \\ = 
			\suma
	}}  \frac{\hbar^{k-1}}{k!} \prod_{i=1}^k \eta^{\beta_i\gamma_i} \prod_{i=1}^k \bar a_i
	\\ \notag &
	\times \int_{\left[\oM_{g_1}^{\bullet,\sim}\left(\mathbb{P}^1,a_1,\dots,a_n,-\bar a_1,\dots,-\bar a_k \right)\right]^{\mathrm{vir}}}
	\frac{1}{1-\tilde\psi_0} 
	\cdot s^* c^{mu,\bullet}_{g_1,n+k}\left(\bigotimes_{i=1}^n v_{\alpha_i} \otimes \bigotimes_{i=1}^k v_{\beta_i}\right) \cdot s^* \Lambda^\bullet(\frac{1}{\hbar})\hbar^{g_1}
	\\ \notag &
	\times \int_{\overline{\mathcal{M}}_{g_2,k+2}}  \psi_{k+1} s_* \left( \left[\oM_{g_2}^{\sim}\left(\mathbb{P}^1,\bar a_1,\dots,\bar a_k,0,-\suma \right)\right]^{\mathrm{vir}}\right)
	\cdot p^* c_{g_2,k+1}^{[\mu]}(v_{\gamma_1} \otimes \dots \otimes v_{\gamma_k}\otimes v_1) \cdot p^* \Lambda\Big(\frac{1}{\hbar}\Big)\hbar^{g_2}.
\end{align}
This we can rewrite as
 \begin{align} \label{eq:Prof-Dil-Step9}
 	& \int_{\overline{\mathcal{M}}_{g,n+2}}  \psi_{n+1} p^*s_*  \left[\oM_{g}^{\sim}\left(\mathbb{P}^1,a_1,\dots,a_n,-\suma \right)\right]^{\mathrm{vir}}
 	\cdot p^* c_{g,n+1}^{[\mu]}(v_{\alpha_1} \otimes \dots \otimes v_{\alpha_n}\otimes v_1) \cdot p^* \Lambda\Big(\frac{1}{\hbar}\Big)\hbar^g
 	\\ \notag &
 	+ \sum_{k=1}^\infty  \sum_{\substack{g_1\in \ZZ, g_2 \in \ZZ_{\geq 0} \\ g_1+g_2+k-1 = g  
 	}} \sum_{\substack{\bar a_1,\dots,\bar a_k \geq 1 \\ \bar a_1+\cdots+\bar a_k \\ = 
 			\suma
 	}}  \frac{\hbar^{k-1}}{k!} \prod_{i=1}^k \eta^{\beta_i\gamma_i} \prod_{i=1}^k \bar a_i
 	\\ \notag &
 	\times \int_{\left[\oM_{g_1}^{\bullet,\sim}\left(\mathbb{P}^1,a_1,\dots,a_n,-\bar a_1,\dots,-\bar a_k \right)\right]^{\mathrm{vir}}}
 	\frac{1}{1-\tilde\psi_0} 
 	\cdot s^* c^{[\mu],\bullet}_{g_1,n+k}\left(\bigotimes_{i=1}^n v_{\alpha_i} \otimes \bigotimes_{i=1}^k v_{\beta_i}\right) \cdot s^* \Lambda^\bullet(\frac{1}{\hbar})\hbar^{g_1}
 	\\ \notag &
 	\times \int_{\overline{\mathcal{M}}_{g_2,k+2}}  \psi_{k+1} p^*s_* \left( \left[\oM_{g_2}^{\sim}\left(\mathbb{P}^1,\bar a_1,\dots,\bar a_k,-\suma \right)\right]^{\mathrm{vir}}\right)
 	\cdot p^* c_{g_2,k+1}^{[\mu]}(v_{\gamma_1} \otimes \dots \otimes v_{\gamma_k}\otimes v_1) \cdot p^* \Lambda\Big(\frac{1}{\hbar}\Big)\hbar^{g_2},
 \end{align}
and then use the projection formula for the $\psi$-class to obtain that \eqref{eq:Prof-Dil-Step7} is equal to 
 \begin{align} \label{eq:Prof-Dil-Step10}
	& (2g-1+n)\int_{\overline{\mathcal{M}}_{g,n+1}} s_*  \left[\oM_{g}^{\sim}\left(\mathbb{P}^1,a_1,\dots,a_n,-\suma \right)\right]^{\mathrm{vir}}
	\cdot c_{g,n+1}^{[\mu]}(v_{\alpha_1} \otimes \dots \otimes v_{\alpha_n}\otimes v_1) \cdot  \Lambda\Big(\frac{1}{\hbar}\Big)\hbar^g
	\\ \notag &
	+ \sum_{k=1}^\infty  \sum_{\substack{g_1\in \ZZ, g_2 \in \ZZ_{\geq 0} \\ g_1+g_2+k-1 = g  
	}} \sum_{\substack{\bar a_1,\dots,\bar a_k \geq 1 \\ \bar a_1+\cdots+\bar a_k \\ = 
			\suma
	}}  \frac{\hbar^{k-1}}{k!} \prod_{i=1}^k \eta^{\beta_i\gamma_i} \prod_{i=1}^k \bar a_i
	\\ \notag &
	\times \int_{\left[\oM_{g_1}^{\bullet,\sim}\left(\mathbb{P}^1,a_1,\dots,a_n,-\bar a_1,\dots,-\bar a_k \right)\right]^{\mathrm{vir}}}
	\frac{1}{1-\tilde\psi_0} 
	\cdot s^* c^{[\mu],\bullet}_{g_1,n+k}\left(\bigotimes_{i=1}^n v_{\alpha_i} \otimes \bigotimes_{i=1}^k v_{\beta_i}\right) \cdot s^* \Lambda^\bullet(\frac{1}{\hbar})\hbar^{g_1}
	\\ \notag &
	\times (2g_2-1+k)\int_{\overline{\mathcal{M}}_{g_2,k+2}} s_* \left( \left[\oM_{g_2}^{\sim}\left(\mathbb{P}^1,\bar a_1,\dots,\bar a_k,-\suma \right)\right]^{\mathrm{vir}}\right)
	\cdot  c_{g_2,k+1}^{[\mu]}(v_{\gamma_1} \otimes \dots \otimes v_{\gamma_k}\otimes v_1) \cdot \Lambda\Big(\frac{1}{\hbar}\Big)\hbar^{g_2}.
\end{align}
In the second summand of this formula we recognize the evaluation of $(2g-1+n)\tilde \psi_0$ that doesn't use a point of multiplicity $0$ (cf.~\cite[Proof of Theorem 4]{BSSZ}, more precisely, the expression for $\tilde \psi_0$ in the symmetrized case in~\cite[Equation (2)]{BSSZ}). Thus  \eqref{eq:Prof-Dil-Step10} can be assembled into 
 \begin{align} \label{eq:Prof-Dil-Step11}
	& (2g-1+n)\int_{\overline{\mathcal{M}}_{g,n+1}} s_* \left( \frac{1}{1-\tilde \psi_0} \left[\oM_{g}^{\sim}\left(\mathbb{P}^1,a_1,\dots,a_n,-\suma \right)\right]^{\mathrm{vir}} \right)
	\\ \notag & \qquad \qquad \qquad \qquad
	\cdot c_{g,n+1}^{[\mu]}(v_{\alpha_1} \otimes \dots \otimes v_{\alpha_n}\otimes v_1) \cdot  \Lambda\Big(\frac{1}{\hbar}\Big)\hbar^g
	\\ \notag & = (2g-1+n)\int_{\overline{\mathcal{M}}_{g,n+1}} s_* \left( \frac{ \hbar^g\Lambda\Big(\frac{1}{\hbar}\Big)}{1-\tilde \psi_0} \left[\oM_{g}^{\sim}\left(\mathbb{P}^1,a_1,\dots,a_n,-\suma \right)\right]^{\mathrm{vir}} \right)
	\cdot c_{g,n+1}^{[\mu]}(v_{\alpha_1} \otimes \dots \otimes v_{\alpha_n}\otimes v_1),
\end{align}
which is precisely the right hand side of Equation~\eqref{eq:Dilaton-Reduced-A1}. This completes the proof of \Cref{thm:StringANDDilaton}.


\bigskip

\section{Proof of technical statements}

\label{sec:TechnicalProofs}

\subsection{Polynomiality of the \texorpdfstring{$A$}{A}-class}

\label{sec:polyA}

In this section we prove \Cref{thm:polynomiality-A}. We first proof the polynomiality statements for $s_{*}\left(\tilde{\psi}_{0}^{d}\left[\overline{\mathcal{M}}_{g}^{\sim}\left(\mathbb{P}^{1},a_{1},\dots,a_{n},-\suma\right)\right]\right)$ which rely on the following lemma obtained by Buryak and Rossi. We then prove that the push-forward by the forgetful map forgetting the last marked point $\pi:\overline{\mathcal{M}}_{g,n+1}\rightarrow\overline{\mathcal{M}}_{g,n}$ is  divisible by $\suma$. 

\begin{lem}[\cite{BR16-quantum}, Appendix~A]\label{lem:multi:Faulhaber}

Let $N$ be a nonnegative integer. Let $R$ be a ring, and let $P \in R[k_1, \dots, k_q]$ be a polynomial which is odd or even depending on the parity of its degree $\deg P$. Then
\begin{equation}
	\sum_{\substack{k_1,\dots,k_q\geq0\\k_1 + \dots + k_q = N}} k_1 \cdots k_q P(k_1, \dots, k_q)
\end{equation}
is a polynomial in $N$, with coefficient in $R$, of degree $2q-1+\deg P$,  which is odd or even depending on its degree. 
\end{lem}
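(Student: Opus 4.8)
The plan is to peel off the factors $k_1\cdots k_q$, reduce to a single monomial $P$ by $R$-linearity, and then evaluate the resulting composition sum by generating functions. Since $P$ is odd or even according to the parity of $\deg P$, it is an $R$-linear combination of monomials $k_1^{m_1}\cdots k_q^{m_q}$ with $m_1+\cdots+m_q\equiv\deg P\pmod 2$, so it suffices to treat $P=k_1^{m_1}\cdots k_q^{m_q}$, in which case the sum in question equals $S(N):=\sum_{k_1+\cdots+k_q=N,\ k_i\ge 0}k_1^{m_1+1}\cdots k_q^{m_q+1}$, and I must show this is a polynomial in $N$ of degree $2q-1+\sum_i m_i$ that is odd or even according to the parity of that degree. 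All these monomial contributions have the same degree parity, namely that of $2q-1+\deg P$; hence the polynomiality and the parity for a general $P$ follow at once by linearity, the total degree being $\le 2q-1+\deg P$ and equal to it whenever the leading homogeneous part of $P$ survives the explicit (positive–integer coefficient) linear combination computing the top coefficient — in particular always for a monomial.

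For the monomial statement I would use the Eulerian polynomials $A_n(z)$, characterized by $\sum_{k\ge 0}k^n z^k=A_n(z)/(1-z)^{n+1}$, which for $n\ge 1$ satisfy $\deg A_n=n$, $A_n(1)=n!$ and the palindromy $A_n(z)=z^{n+1}A_n(1/z)$. Multiplying the $q$ series gives $\sum_{N\ge 0}S(N)z^N=B(z)/(1-z)^D$ with $B:=\prod_{i=1}^q A_{m_i+1}$ and $D:=\sum_{i=1}^q(m_i+2)$; since every $m_i+1\ge 1$ we have $\deg B=\sum_i m_i<D$, $B(1)=\prod_i(m_i+1)!\ne 0$, and $B(z)=z^D B(1/z)$. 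The standard fact that a rational function $B(z)/(1-z)^D$ with $\deg B<D$ has a polynomial coefficient sequence of degree $\le D-1$, with equality iff $B(1)\ne 0$, then shows that $S$ is a polynomial of degree exactly $D-1=2q-1+\sum_i m_i$.

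The parity is where the real content sits, and I would extract it from the palindromy. Writing $g(z)=B(z)/(1-z)^D$, the identity $B(z)=z^D B(1/z)$ gives $g(1/z)=(-1)^D g(z)$ as rational functions. Expanding both sides at $z=\infty$: the left side is $\sum_{N\ge 0}S(N)z^{-N}$ (the Taylor series of $g$ at $0$, with $z\mapsto 1/z$), while by the classical reciprocity for polynomial coefficient sequences the expansion of $g$ at $\infty$ is $-\sum_{M\ge 1}S(-M)z^{-M}$; matching coefficients yields $S(-M)=(-1)^{D+1}S(M)$ for all $M\ge 1$, hence $S(-x)=(-1)^{D-1}S(x)=(-1)^{\deg S}S(x)$, which is exactly the claim. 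I expect the main obstacle to be precisely this parity: for a bare power sum the analogue is false (e.g. $\sum_{k=0}^N k=\tfrac12 N(N+1)$ is not odd/even by its degree), so the argument must genuinely use the extra factors $k_1\cdots k_q$ — equivalently the exponents $\ge 1$ — which is what makes $B$ palindromic of the full span $D$ rather than $D-1$. An induction on $q$ is also possible but requires a delicate auxiliary statement about $\sum_{j=0}^N j^c Q(N-j)$ with $Q(0)=0$ and $c\ge 1$, the hypotheses $Q(0)=0$ and $c\ge 1$ being exactly what prevents the off-by-one that destroys parity; I prefer the generating-function route because it handles all $q$ uniformly and makes the role of the $k_1\cdots k_q$ manifest.
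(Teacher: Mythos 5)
Your proof is correct. Note first that the paper does not prove this lemma at all --- it is quoted from \cite{BR16-quantum}, Appendix~A --- so there is no in-paper argument to compare with; judged on its own, your generating-function route is a complete and self-contained proof: the reduction to monomials $k_1^{m_1}\cdots k_q^{m_q}$ is legitimate because the parity hypothesis forces all monomials of $P$ to contribute polynomials in $N$ whose exponents all have the parity of $2q-1+\deg P$; the Eulerian-polynomial facts you invoke ($\deg A_n=n$, $A_n(1)=n!$, $A_n(z)=z^{n+1}A_n(1/z)$ for $n\geq 1$) are correct and apply since every exponent $m_i+1$ is at least $1$; and the parity extraction from $g(1/z)=(-1)^Dg(z)$ via the Popoviciu--Stanley reciprocity $\sum_{M\geq 1}S(-M)z^{-M}=-g(z)\big|_{z=\infty}$ is sound (both sides are the Laurent expansion at infinity of the same rational function). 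Two small remarks. First, a harmless slip: $\deg B=\sum_i(m_i+1)=q+\deg P$, not $\sum_i m_i$; the inequality $\deg B<D$ you need still holds. Second, your hedge about the degree for general $P$ is not merely cautious but necessary: e.g.\ for $q=2$ and $P=2k_1^2-3k_1k_2$ (even, degree $2$) the coefficients $\prod_i(m_i+1)!/(D-1)!$ give $2\cdot\tfrac{1}{20}-3\cdot\tfrac{1}{30}=0$, so the top coefficient cancels and the sum has degree strictly less than $2q-1+\deg P$ (the drop is by an even amount, so the parity claim survives); thus the lemma's degree assertion should really be read as ``of degree at most $2q-1+\deg P$, with the parity of $2q-1+\deg P$'', which is exactly what your argument establishes and is all that is used in the paper. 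A final cosmetic point: the polynomial in $N$ has coefficients in $R\otimes\QQ$ (Faulhaber-type denominators appear), which is immaterial here since the relevant $R$ is a $\QQ$-algebra.
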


We start by expressing $\psi_{0}^{d}$ as a sum of strata in $LM_{2g-2+n}$. For each list of positive integers $r_{1},\dots,r_{d+1}$ such that $\sum r_{i}=2g-2+n$, we denote by $D_{r_{1},r_{2},\dots,r_{d+1}}$ the boundary strata in $LM_{2g-2+n}$ composed of $d+1$ components, where we number the components such that the first component contains $\infty$, the second component is attached to the first and so on, and such that the $i$th component contains $r_{i}$ marked points (excluding the points $0$ and $\infty$ on each component). We have
\begin{equation}
\label{eq:psi-as-strata}
\psi_{0}^{d}=\sum_{r_{1}+\cdots+r_{d+1}=2g-2+n}D_{r_{1},r_{2},\dots,r_{d+1}}\cdot\frac{r_{1}}{r_{1}+\cdots+r_{d+1}}\frac{r_{2}}{r_{2}+\cdots+r_{d+1}}\cdots\frac{r_{d+1}}{r_{d+1}}.
\end{equation}
We prove by induction on $d$ that 
\begin{equation}
	\label{eq:divisor-pulled-pushed}
	s_{*}\Big(t^{*} D_{r_{1},r_{2},\dots,r_{d+1}}\cdot  \left[\overline{\mathcal{M}}_{g}^{\sim}\left(\mathbb{P}^{1},a_{1},\dots,a_{n},-\suma\right)\right]^{virt}\Big),
\end{equation}
for $a_i > 0$, is represented by a polynomial in $a_{1},\dots,a_{n}$ of degree $2g+d$, where we recall that $t:\overline{\mathcal{M}}_{g}^{\sim}\left(\mathbb{P}^{1},a_{1},\dots,a_{n},-\suma\right) \rightarrow LM_{2g-2+n}$ and $s:\overline{\mathcal{M}}_{g}^{\sim}\left(\mathbb{P}^{1},a_{1},\dots,a_{n},-\suma\right)\rightarrow\overline{\mathcal{M}}_{g,n+1}$. 
If $d=0$, the contribution of (\ref{eq:divisor-pulled-pushed}) is $DR_{g}\left(a_{1},\dots,a_{n},-\suma\right)$ which is an even polynomial of degree $2g$ by \cite{PZ,Pim}. 
Suppose now $d\geq 0$. Since we apply $s_{*}$ in (\ref{eq:divisor-pulled-pushed}), it follows from \cite[Lemma~2.3]{BSSZ} that the only strata of $\overline{\mathcal{M}}_{g}^{\sim}\left(\mathbb{P}^{1},a_{1},\dots,a_{n},-\suma\right)$ contributing are those with a unique stable component in the preimage of each of the $d+1$ components of the target. Isolating the contribution of the last component, we get 
\begin{align}
\label{eq:induction-divisor}
& s_{*}\Big(t^{*}  D_{r_{1},r_{2},\dots,r_{d+1}}\cdot  \left[\overline{\mathcal{M}}_{g}^{\sim}\left(\mathbb{P}^{1},a_{1},\dots,a_{n},-\suma\right)\right]^{virt}\Big)  \\ \notag &  =\sum_{\substack{g_1,g_2\geq0, \,p\geq1 \\ g_{1}+g_{2}+p-1=g}}\sum_{I_{1}\sqcup I_{2}=\left\{ 1,\dots,n\right\}}\sum_{\substack{k_1,\dots,k_p\geq0 \\ \notag k_{1}+\cdots+k_{p}=\left|a_{I_{1}}\right|}}\frac{k_{1}\cdots k_{p}}{p!}\times \nonumber
\\ \notag
 & \qquad \times b_{*}\left(DR_{g_{1}}(a_{I_{1}},-k_{1},\dots,-k_{p})\otimes s_{*}\Big(t^{*}\left(D_{r_{2},\dots,r_{d+1}}\right)\cdot\Big[\overline{\mathcal{M}}_{g_{2}}^{\sim}\left(\mathbb{P}^{1},a_{I_{2}},k_{1},\dots,k_{p},-\suma\right)\Big]^{virt}\Big)\right),
\end{align}
where $p$ counts the number of nodes between the last component and other components, the order of the ramifications at these nodes is given by $k_{1},\dots,k_{p}$, the set $I_{1}$ contains the marked points on the last component, moreover we impose the condition $2g_{1}-2+\left|I_{1}\right|+p=r_{1}$ on the genus and number of special points of the last component. The map $b:\overline{\mathcal{M}}_{g_{1},\left|I_{1}\right|+p}\times\overline{\mathcal{M}}_{g_{2},\left|I_{2}\right|+p+1}\rightarrow\overline{\mathcal{M}}_{g,n+1}$ is the glueing map. The DR cycle is a polynomial in $k_1\dots,k_p$ and $(a_i)_{i \in I_1}$ of degree $2g_1$. By induction, $s_{*}\left(t^{*}\left(D_{r_{2},\dots,r_{d+1}}\right)\cdot\overline{\mathcal{M}}_{g_{2}}\left(\mathbb{P}^{1},a_{I_{2}},k_{1},\dots,k_{p},-\suma\right)\right)$ is a polynomial in $k_{1},\dots,k_{p}$ and $\left(a_{i}\right)_{i\in I_{2}}$ of degree $2g+d-1$. Therefore, it follows from \Cref{lem:multi:Faulhaber} that the right hand side of \Cref{eq:induction-divisor} is a polynomial in $a_1,\dots,a_n$ of degree $2g+d$.

\smallskip

We now show that the polynomial
\begin{equation} \label{eq:PushforwardPol-DivisibleByA}
	\pi_*s_{*}\left(\tilde{\psi}_{0}^{d}\left[\overline{\mathcal{M}}_{g}^{\sim}\left(\mathbb{P}^{1},a_{1},\dots,a_{n},-\suma\right)\right]^{virt}\right)
\end{equation}
is divisible by $\suma$, where $\pi:\overline{\mathcal{M}}_{g,n+1}\rightarrow\overline{\mathcal{M}}_{g,n}$ is the forgetful map forgetting the last marked point. 
We proceed by induction on $d$. 
The base of the induction is clear: for $d=0$ we get a DR cycle and use  \Cref{prop:DR-pushforward-divisible-bb}.
For the step of induction, we use the computation close to the one in the proof of \Cref{prop:graph-dr-rubber}: we evaluate $\tilde \psi_0$ as in \eqref{eq:psi-tilde-evaluated} and then further split the $D_1$ term in this evaluation as $D_1 = (\suma-a_1)\Delta_{1}+D'_{1}$, where
\begin{align}
\Delta_1\coloneqq 
\left[\overline{\mathcal{M}}_{g}^{\sim}\left(\mathbb{P}^{1},a_{2},\dots,a_n,-\suma+a_1\right)\right]^{vir}\boxtimes\left[\overline{\mathcal{M}}_{0}^{\sim}\left(\mathbb{P}^{1},a_{1},\suma-a_1,-\suma\right)\right]^{vir}	
\end{align}
and $D'_{1}$ is the weighted sum of all other irreducible divisors entering $D_1$ according to~\eqref{eq:irred-divisor}. We note that $\pi_*s_*(\tilde \psi_0^{d-1} D'_{1})$ is given as 
\begin{align}
	\label{eq:D1'-term}
	& \pi_*s_*\bigg(\sum_{\substack{g_1,g_2\geq0, \,p\geq1 \\ g_{1}+g_{2}+p-1=g}}\sum_{\substack{I_{1}\sqcup I_{2}=\left\{ 2,\dots,n\right\} \\ (g_2,2+|I_2|+p)\\ \not=(0,3)}}\sum_{\substack{k_1,\dots,k_p\geq0 \\ k_{1}+\cdots+k_{p}=\left|a_{I_{1}}\right|}} \frac{\prod_{i=1}^p k_i}{p!}\, \tilde\psi_0^{d-1}\left[\overline{\mathcal{M}}_{g_{1}}^{\sim}\left(\mathbb{P}^{1},a_{I_1},-k_{1},\dots,-k_{p}\right)\right]^{vir}\boxtimes 
	\\ \notag & \qquad \qquad \qquad \qquad \qquad
	\left[\overline{\mathcal{M}}_{g_{2}}^{\sim}\left(\mathbb{P}^{1},a_{1},a_{I_2},k_{1},\dots,k_{p},-\suma\right)\right]^{vir}\bigg).
\end{align}
This expression is a polynomial in the $a_i$'s since it is a sum of polynomials in these variables (see the second equality in (\ref{eq:proving-divisibility}) below). Note that this does not directly follow from \Cref{lem:multi:Faulhaber} as it is not proved that $s_*\Big(\tilde\psi_0^{d-1}\left[\overline{\mathcal{M}}_{g_{1}}^{\sim}\left(\mathbb{P}^{1},a_{I},-k_{1},\dots,-k_{p}\right)\right]^{vir}\Big)$ is polynomial. After substituting $a_1:=\suma-\sum_{i \neq 1} a_i$ in (\ref{eq:D1'-term}), we get a polynomial in the variables $a_2,\dots,a_n,\suma$. Moreover, the only dependency in the variable $\suma$ is through the polynomial  $\pi_* s_*\left[\overline{\mathcal{M}}_{g_{2}}^{\sim}\left(\mathbb{P}^{1},a_{1},a_{J},k_{1},\dots,k_{p},-\suma\right)\right]^{vir}$, which by \Cref{prop:DR-pushforward-divisible-bb} only involves $\suma^2,\suma^3,\dots$. In particular, each coefficient of $a^k$ in (\ref{eq:D1'-term}), for $k\geq 2$, is a polynomial in $a_2,\dots,a_n$. Therefore, after the inverse substitution $\suma:=\sum_{i=1}^n a_i$, we get that (\ref{eq:D1'-term}) is a polynomial in $a_1,\dots,a_n$ in divisible by  $\suma=\sum_{i=1}^n a_i$.

%
%
%
%
%
%

Now note that on $\oM_{g,n+1}$ we have $\psi_1 = \pi^*\psi_1 + B_{1}$, where $B_1$ is the divisor of reducible curves with one component of genus $0$ with the points $1$ and $n+1$ on this component. Note that 
\begin{align}
B_1\cdot s_*\left[\overline{\mathcal{M}}_{g}^{\sim}\left(\mathbb{P}^{1},a_{1},\dots,a_{n},-\suma\right)\right]^{vir} = s_*\Delta_{1}	
\end{align}
Thus 
\begin{align}
\label{eq:proving-divisibility}
	& \pi_*s_* \bigg(\tilde \psi_0^d \left[\overline{\mathcal{M}}_{g}^{\sim}\left(\mathbb{P}^{1},a_{1},\dots,a_{n},-\suma\right)\right]^{vir}\bigg) 
	\\ \notag &
	= 	\pi_*s_* \bigg(\tilde \psi_0^{d-1} \Big( a_1s^*\psi_1 \left[\overline{\mathcal{M}}_{g}^{\sim}\left(\mathbb{P}^{1},a_{1},\dots,a_{n},-\suma\right)\right]^{vir}+ D_1\Big)\bigg) 
	\\ \notag &
	= 	\pi_*s_* \bigg(\tilde \psi_0^{d-1} \Big( a_1s^*\psi_1 \left[\overline{\mathcal{M}}_{g}^{\sim}\left(\mathbb{P}^{1},a_{1},\dots,a_{n},-\suma\right)\right]^{vir}+(\suma-a_1)\Delta_1 + D'_1\Big)\bigg) 
	\\ \notag
	& = \pi_*s_* \bigg(\tilde \psi_0^{d-1} \Big( a_1s^*\pi^*\psi_1 \left[\overline{\mathcal{M}}_{g}^{\sim}\left(\mathbb{P}^{1},a_{1},\dots,a_{n},-\suma\right)\right]^{vir}+a_1\Delta_1 + (\suma-a_1)\Delta_1 + D'_1\Big)\bigg) 
	\\ \notag
	& = a_1\psi_1 \pi_*s_* \bigg(\tilde \psi_0^{d-1}  \left[\overline{\mathcal{M}}_{g}^{\sim}\left(\mathbb{P}^{1},a_{1},\dots,a_{n},-\suma\right)\right]^{vir}\bigg)
	\\ \notag & \qquad +\suma \, s_* \bigg(\tilde \psi_0^{d-1}  \left[\overline{\mathcal{M}}_{g}^{\sim}\left(\mathbb{P}^{1},a_2\dots,a_{n},-\suma+a_1\right)\right]^{vir}\bigg) + \pi_*s_* \bigg(\tilde\psi_0^{d-1}D'_1\bigg).
\end{align}
Here all three terms are polynomials in $a_1,\dots,a_n$, and the first one is divisible by $\suma$ by the induction assumption, the second term has $\suma$ as a factor, and the last term is divisible by $\suma$ by \Cref{prop:DR-pushforward-divisible-bb}, as we discussed above. 

This completes the proof of  \Cref{thm:polynomiality-A}.

\medskip

\subsection{Compatibility of Equations (\ref{eq:constant-two point function})}\label{app: compatibility constant term two point functions}
\label{app:sec:compatibility}

In this section, we prove \Cref{lem:compatibility}, that is  the system of Equations~(\ref{eq:constant-two point function}) has a unique set of solutions $\Omega_{p,\alpha;q,\beta}\vert_{p_{*}^{*}=0}$, for $p,q\geq0$ and $1\leq\alpha,\beta\leq N$. First it is straightforward to show that if a solution exists, it must be unique: Equations~(\ref{eq:constant-two point function}) directly determine $\Omega_{0,\alpha;q,\beta}\vert_{p_{*}^{*}=0}$ for $q\geq0$ and $1\leq\alpha,\beta\leq N$, then, they inductively  fix $\Omega_{1,\alpha;q,\beta}\vert_{p_{*}^{*}=0}$ for $q\geq0$ and $1\leq\alpha,\beta\leq N$, and this process continues. The compatibility of Equations~(\ref{eq:constant-two point function}) is more subtle, it boils down to prove
\begin{equation}
\sum_{p+q=r}\left(-1\right)^{q}\frac{\partial\Omega_{p,\alpha;q,\beta}}{\partial p_{0}^{1}}\Bigg|_{p_{*}^{*}=0}=0\quad{\rm for}\;r\geq1\;{\rm and}\;1\leq\alpha,\beta\leq N.\label{eq: compatibility equation}
\end{equation}

\begin{lem}
\label{lem:derivative-two-point}
Let $p,q\geq0$ and $1\leq\alpha,\beta\leq N$. We have 
\begin{align}
\left.\frac{\partial\Omega_{p,\alpha;q,\beta}}{\partial p_{0}^{1}}\right|_{p_{*}^{*}=0} & ={\rm \Coeff}_{a}\Bigg(\sum_{g\geq0}\left(i\hbar\right)^{g}\sum_{\substack{g_{1},g_{2}\geq0 \,n\geq1\\
g_{1}+g_{2}+n-1=g\\
2g_{1}+n>0,\,2g_{2}+n>0
}
}\sum_{k_{1},\dots,k_{n}>0}\frac{k_{1}\cdots k_{n}}{n!}\prod_{i=1}^{n}\eta^{\mu_{i}\nu_{i}}\label{eq: derivative Omega wrt p}\\
 & \times\int_{{\rm DR}_{g_{1}}\left(0,k_{1},\dots,k_{n},-a\right)}\psi_{1}^{p}\Lambda\left(-\frac{\epsilon^{2}}{i\hbar}\right)c_{g_1,n+2}^{[\mu]}\left(v_{\alpha},v_{\mu_{1}},\dots,v_{\mu_{n}},v_{1}\right)\nonumber \\
 & \times\int_{{\rm DR}_{g_{2}}\left(0,-k_{1},\dots,-k_{n},a\right)}\psi_{1}^{q}\Lambda\left(-\frac{\epsilon^{2}}{i\hbar}\right)c_{g_2,n+2}^{[\mu]}\left(v_{\beta},v_{\nu_{1}},\dots,v_{\nu_{n}},v_{1}\right)\Bigg).\nonumber 
\end{align}
\end{lem}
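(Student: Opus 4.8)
The plan is to compute $\partial\Omega_{p,\alpha;q,\beta}/\partial p_0^1\big|_{p_*^*=0}$ directly from the definition of the two-point function. Recall that $\partial_x\Omega_{p,\alpha;q,\beta} = \tfrac1\hbar[H_{p-1,\alpha},\overline H_{q,\beta}]$, and that by \Cref{eq: H_0 x-derivative} applying $\partial_x$ to a differential polynomial is the same as taking $\tfrac1\hbar[\,\cdot\,,\overline H_{0,1}]$. The key idea, as in the proof of \Cref{lem: small lemmas string}$(4)$, is that $\tfrac{\partial}{\partial p_0^1}$ on differential polynomials coincides with $\tfrac{\partial}{\partial u_0^1}$, and that $\partial_x = \sum_{i\ge 0} u_{i+1}^\gamma \tfrac{\partial}{\partial u_i^\gamma}$, so that
\begin{align}
	\partial_x \Omega_{p,\alpha;q,\beta}\big|_{u_*^*=0} = \frac{\partial \Omega_{p,\alpha;q,\beta}}{\partial p_0^1}\Big|_{p_*^*=0}\cdot u_1^1\big|_{\cdots} + (\text{terms killed at } u_*^*=0),
\end{align}
more precisely one extracts $\tfrac{\partial}{\partial p_0^1}$ of the differential polynomial $\tfrac1\hbar[H_{p-1,\alpha},\overline H_{q,\beta}]$ and then evaluates at zero. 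Hence I would first write
\begin{align}
	\frac{\partial \Omega_{p,\alpha;q,\beta}}{\partial p_0^1}\Big|_{p_*^*=0} = \mathrm{\Coeff}_{(e^{\ii x})^0}\left(\frac{1}{\hbar}\left[H_{p-1,\alpha},\overline H_{q,\beta}\right]\right)\Big|_{p_*^*=0},
\end{align}
using that extracting $\partial/\partial p_0^1$ and then setting all $p$'s to zero is precisely reading off the coefficient of $p_0^1$, i.e. the degree-one-in-$p_0^1$ part with all other $p$'s absent — which by the structure of the commutator leaves exactly the terms linear in $p_0^1$ with one $p_0^1$ coming from each factor being fully contracted.

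Next I would expand the star-product commutator $[H_{p-1,\alpha},\overline H_{q,\beta}]$ using the definition \eqref{eq:def-star-product}. Since we set all $p$-variables to zero at the end, only the terms in which \emph{every} remaining $p$-variable in both $H_{p-1,\alpha}$ and $\overline H_{q,\beta}$ has been hit by a contraction derivative $\ii\hbar k\,\eta^{\mu\nu}\partial_{p_k^\mu}\partial_{p_{-k}^\nu}$ survive — except for one leftover $p_0^1$ which is the variable whose coefficient we extract. The positivity constraint $k>0$ in the star product forces the contracted indices to be positive on one side and negative on the other; this is what produces the sum over $k_1,\dots,k_n>0$ with the weights $k_1\cdots k_n$ and the factors $\prod_i \eta^{\mu_i\nu_i}$, and the $\tfrac1{n!}$ is the symmetry factor from the $n$-th order term in $\exp(\cdots)$. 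The two Hamiltonian densities then factor into two DR-integrals: $H_{p-1,\alpha}$ contributes $\int_{\mathrm{DR}_{g_1}(0,k_1,\dots,k_n,-a)}\psi_1^{p}\Lambda(-\epsilon^2/\ii\hbar)c_{g_1,n+2}^{[\mu]}(v_\alpha,v_{\mu_1},\dots,v_{\mu_n},v_1)$ — note the $\psi_1^{(p-1)+1}=\psi_1^p$ because $H_{p-1,\alpha}$ has $\psi_1^{(p-1)+1}$ in its definition — with the leftover leg carrying multiplicity $-a$ and insertion $v_1$ (coming from the $p_0^1 e^{\ii x\cdot 0}$ we are extracting, with $a$ its formal variable), and symmetrically $\overline H_{q,\beta}$ gives the second integral with multiplicities $-k_i$ and leftover leg $+a$. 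Here the extra marked point on each factor labeled $0$ with multiplicity $0$ comes from the fact that $H_{d,\alpha}$ in \eqref{eq:def-hamilto-density} carries a distinguished first point of multiplicity $0$ and insertion $v_\alpha$.

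The genus bookkeeping gives $g = g_1 + g_2 + (n-1)$ (the $(n-1)$ shift is standard: $n$ contractions each carry a factor $\ii\hbar$ but the commutator of two local functionals lowers the $\hbar$-order by one, or equivalently this is the genus of the graph obtained by joining two vertices with $n$ edges), which explains the overall $(\ii\hbar)^g$ and the constraint $g_1+g_2+n-1=g$; the stability conditions $2g_i + n > 0$ come from $2g_i - 2 + (n+2) > 0$ for each factor. I would assemble these pieces and read off exactly the right-hand side of \eqref{eq: derivative Omega wrt p}, with the $\mathrm{\Coeff}_a$ because $a$ was introduced as the formal variable tracking the $p_0^1$ we extracted and it should not appear in the final answer except as a bookkeeping device for the multiplicity of the two glued legs. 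The main obstacle I anticipate is a careful sign and power-of-$\ii$ chase: the $\ii$'s hidden in $e^{\ii x}$, in $\ii\hbar k$ in the star product, and in $\Lambda(-\epsilon^2/\ii\hbar)$ must all be tracked precisely, and one must verify that the contraction indeed forces the two glued legs to carry opposite multiplicities $\pm k_i$ and that no overcounting occurs when several $k_i$ coincide (the $\tfrac1{n!}$ must exactly absorb this, matching the combinatorics of the $n$-th term in the exponential). This is essentially the same computation as in \cite[Section~6.5]{BDGR20} adapted to our conventions and with $\lambda_g$ replaced by the full $\Lambda$, so I expect it to go through, but the reconciliation of normalizations is where the real care is needed.
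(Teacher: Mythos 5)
Your overall plan (expand the star-product commutator, contract all remaining $p$-variables pairwise, and reassemble the two Hamiltonian densities into the two DR-integrals with the weights $k_1\cdots k_n/n!$ and $\prod_i\eta^{\mu_i\nu_i}$) is the right computation in spirit, but your starting identity is false, and this is a genuine gap rather than a matter of normalization. You claim that $\partial\Omega_{p,\alpha;q,\beta}/\partial p_0^1\big|_{p_*^*=0}$ can be read off as the coefficient of $p_0^1\,(e^{\ii x})^0$ in $\tfrac1\hbar\left[H_{p-1,\alpha},\overline H_{q,\beta}\right]$. But that commutator equals $\partial_x\Omega_{p,\alpha;q,\beta}$, and $\partial_x$ acts on a monomial $p_{a_1}^{\alpha_1}\cdots p_{a_m}^{\alpha_m}e^{\ii x\sum a_j}$ by multiplication by $\ii\sum a_j$; hence the coefficient of $p_0^1$ (for which $\sum a_j=0$) in $\partial_x\Omega$ is identically zero, whatever the sought coefficient of $p_0^1$ in $\Omega$ is. For $p=q=0$ your identity would return $0$ instead of $\eta_{\alpha\beta}$. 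The information you want is destroyed by $\partial_x$ at $a=0$; it must be recovered by extracting the coefficient of $p_a^1e^{\ii ax}$ at \emph{generic} $a$, where $\partial_x$ multiplies by $\ii a$, and then inverting this factor using polynomiality in $a$, i.e.
\begin{equation}
\left.\frac{\partial f}{\partial p_0^1}\right|_{p_*^*=0}=\Coeff_a\Bigl(\Coeff_{p_a^1e^{\ii a x}}\bigl(-\ii\,\partial_x f\bigr)\Bigr),
\end{equation}
which is exactly the identity with which the actual proof begins. This is also why the two extra legs in the statement carry multiplicities $-a$ and $+a$ (with the balance forcing $a=\sum k_i$) and why a genuine $\Coeff_a$ appears; your reading of $a$ as a mere bookkeeping label for an extracted $p_0^1e^{\ii x\cdot 0}$ is inconsistent with the formula you are proving.

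A second missing ingredient, which your phrase ``by the structure of the commutator'' does not supply, is the argument that only one ordering of the commutator contributes and that the extracted variable must sit in $\overline H_{q,\beta}$. In the paper this is done by choosing $a>0$, introducing $f\tilde{\star} g:=f\star g-fg$, and observing that $\Coeff_{p_a^1e^{\ii ax}}\bigl(\overline H_{q,\beta}\tilde{\star} H_{p-1,\alpha}\bigr)=0$ because the star-product derivatives acting on $\overline H_{q,\beta}$ carry only positive indices, so the DR-multiplicities in that factor would have positive sum; the same positivity argument kills the terms where $p_a^1$ is taken from $H_{p-1,\alpha}$. This argument needs $a>0$ and is unavailable in your $a=0$ setup. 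Once these two steps are in place, the rest of your outline (the contractions, the $1/n!$, the genus count $g=g_1+g_2+n-1$, and the powers $\psi_1^p$, $\psi_1^q$) does assemble into the stated right-hand side.
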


\begin{proof}
For a differential polynomial $f$ such that $\partial_{x}f=g$, one can check that \begin{align}
	\frac{\partial f}{\partial p_{0}^{1}}\Big|_{p_{*}^{*}=0}={\rm \Coeff}_{a}\left({\rm \Coeff}_{p_{a}^{1}e^{iax}}-ig\right).
\end{align}
In particular, we have 
\begin{equation}
\left.\frac{\partial\Omega_{p,\alpha;q,\beta}}{\partial p_{0}^{1}}\right|_{p_{*}^{*}=0}={\rm \Coeff}_{a}\left({\rm \Coeff}_{p_{a}^{1}e^{iax}}\frac{1}{i\hbar}\left[H_{p-1,\alpha},\overline{H}_{q,\beta}\right]\right).
\end{equation}
Then, introduce $f\tilde{\star}g:=f\star g-fg$. Choosing $a>0$, we have 
\begin{equation}
{\rm \Coeff}_{p_{a}^{1}e^{iax}}\overline{H}_{q,\beta}\tilde{\star}H_{p-1,\alpha}=0.
\end{equation}
This vanishing happens for the following reason. The term $\overline{H}_{q,\beta}\tilde{\star}\left(\cdots\right)$ involves a product of derivative $\frac{\partial}{\partial p_{k_{1}}^{\alpha_{1}}}\cdots\frac{\partial}{\partial p_{k_{s}}^{\alpha_{s}}}$, with $k_{1},\dots,k_{s}>0$, from the star product which are acting on $\overline{H}_{q,\beta}$. Since we use the $\tilde{\star}$-product, we have $s\geq1$. Consequently, when extracting the coefficient of $p_{a}^{1}e^{aix}$ with the condition $a>0$, the sum of the parts of each DR-cycle from $\overline{H}_{q,\beta}$ is positive, contradicting the requirement that they sum to zero. Due to this vanishing, we get 
\begin{equation}
{\rm \Coeff}_{p_{a}^{1}e^{iax}}\frac{1}{i\hbar}\left[H_{p-1,\alpha},\overline{H}_{q,\beta}\right]={\rm \Coeff}_{p_{a}^{1}e^{iax}}\frac{1}{i\hbar}\left(H_{p-1,\alpha}\tilde{\star}\overline{H}_{q,\beta}\right).
\end{equation}
Using the same argument, we see that this last expression vanishes if the variable $p_{a}^{1}$ is taken from $H_{p-1,\alpha}$, and therefore it belongs to $\overline{H}_{q,\beta}$. Writing explicitly the definition of the hamiltonian density, the hamiltonian,  the star product, and extracting the coefficient of $p_{a}^{1}e^{iax}$, we get \Cref{eq: derivative Omega wrt p}. 
\end{proof}
Now using the the push forward property of the DR cycles (\Cref{cor: push forward DR numbers}) we obtain for $p,q\geq1$:
\begin{align}
\left.\frac{\partial\Omega_{p,\alpha;q,\beta}}{\partial p_{0}^{1}}\right|_{p_{*}^{*}=0} & ={\rm \Coeff}_{a}\Bigg(\sum_{g\geq0}i\left(i\hbar\right)^{g}\sum_{\substack{g_{1},g_{2}\geq0,\,n\geq1\\
g_{1}+g_{2}+n-1=g\\
2g_{1}-1+n>0,\,2g_{2}-1+n>0
}
}\sum_{k_{1},\dots,k_{n}>0}\frac{k_{1}\cdots k_{n}}{n!}\prod_{i=1}^{n}\eta^{\mu_{i}\nu_{i}}\nonumber \\
 & \times\int_{{\rm DR}_{g_{1}}\left(-a,k_{1},\dots,k_{n}\right)}\psi_{1}^{p-1}\Lambda\left(-\frac{\epsilon^{2}}{i\hbar}\right)c_{g_1,n+1}^{[\mu]}\left(v_{\alpha},v_{\mu_{1}},\dots,v_{\mu_{n}}\right)\\
 & \times\int_{{\rm DR}_{g_{2}}\left(-a,-k_{1},\dots,-k_{n}\right)}\psi_{1}^{q-1}\Lambda\left(-\frac{\epsilon^{2}}{i\hbar}\right)c_{g_2,n+1}^{[\mu]}\left(v_{\beta},v_{\nu_{1}},\dots,v_{\nu_{n}}\right)\Bigg),\nonumber 
\end{align}
and when $p=0$ and $q\geq1$, we get
\begin{align}
\left.\frac{\partial\Omega_{0,\alpha;q,\beta}}{\partial p_{0}^{1}}\right|_{p_{*}^{*}=0}=\left.\frac{\partial\Omega_{q,\beta;0,\alpha}}{\partial p_{0}^{1}}\right|_{p_{*}^{*}=0} & ={\rm \Coeff}_{a}\Bigg(\sum_{g\geq0}i\left(i\hbar\right)^{g}a\int_{{\rm DR}_{g}\left(-a,a\right)}\psi_{1}^{q-1}\Lambda\left(-\frac{\epsilon^{2}}{i\hbar}\right)c_{g,2}^{[\mu]}\left(v_{\alpha},v_{\beta}\right)\Bigg).
\end{align}
Using these above expressions, we obtain
\begin{align}
\sum_{p+q=r}\left(-1\right)^{q}\frac{\partial\Omega_{p,\alpha;q,\beta}}{\partial p_{0}^{1}}\Bigg|_{p_{*}^{*}=0} & ={\rm \Coeff}_{a}\left(\sum_{g\geq0}i\left(i\hbar\right)^{g}\Phi_{g}^{r}\right),
\end{align}
where
\begin{align}
\Phi_{g}^{r} & =a\int_{{\rm DR}_{g}\left(-a,a\right)}\psi_{1}^{r-1}\Lambda\left(-\frac{\epsilon^{2}}{i\hbar}\right)c_{g,2}^{[\mu]}\left(v_{\alpha},v_{\beta}\right)+\left(-1\right)^{r}a\int_{{\rm DR}_{g}\left(-a,a\right)}\psi_{1}^{r-1}\Lambda\left(-\frac{\epsilon^{2}}{i\hbar}\right)c_{g,2}^{[\mu]}\left(v_{\beta},v_{\alpha}\right)\nonumber \\
 & +\Bigg(\sum_{\substack{p+q=r-2\\
p,q\geq0
}
}\left(-1\right)^{q+1}\sum_{\substack{g_{1},g_{2},n\geq0\\
g_{1}+g_{2}+n-1=g\\
2g_{1}-1+n>0,\,2g_{2}-1+n>0
}
}\sum_{k_{1},\dots,k_{n}>0}\frac{k_{1}\cdots k_{n}}{n!}\prod_{i=1}^{n}\eta^{\mu_{i}\nu_{i}}\\
 & \times\int_{{\rm DR}_{g_{1}}\left(-a,k_{1},\dots,k_{n}\right)}\psi_{1}^{p}\Lambda\left(-\frac{\epsilon^{2}}{i\hbar}\right)c_{g_1,n+1}^{[\mu]}\left(v_{\alpha},v_{\mu_{1}},\dots,v_{\mu_{n}}\right)\nonumber \\
 & \times\int_{{\rm DR}_{g_{2}}\left(-a,-k_{1},\dots,-k_{n}\right)}\psi_{1}^{q}\Lambda\left(-\frac{\epsilon^{2}}{i\hbar}\right)c_{g_2,n+1}^{[\mu]}\left(v_{\beta},v_{\nu_{1}},\dots,v_{\nu_{n}}\right)\Bigg).\nonumber 
\end{align}
We now prove that $\Phi_{g}^{r}$ vanishes. This comes from the following relation in the Losev-Manin space. 
\begin{lem}
Let $r\geq0$, we have the relation
\begin{equation}
\left(-1\right)^{r+1}\psi_{0}^{r}+\sum_{p+q=r-1}\left(-1\right)^{p-1}\psi_{0}^{p}\Delta\psi_{\infty}^{q}+\psi_{\infty}^{r}=0\label{eq: Losev Manin compatibility}
\end{equation}
in the Losev-Manin space $LM_{m}$, for $m\geq0$, where $\Delta$ represents the sum of boundary divisors corresponding to curves with one separating node, such that the point $0$ lies on one component and the point $\infty$ lies on the other.
\end{lem}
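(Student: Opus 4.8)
The plan is to prove the relation~\eqref{eq: Losev Manin compatibility} for all $r\ge 0$ at once, by packaging it into a generating series and reducing it to a single divisor relation on $LM_m$. Introducing a formal variable $z$, the generating series
\begin{align}
\sum_{r\ge 0} z^r\Big( (-1)^{r+1}\psi_0^r + \!\!\sum_{p+q=r-1}\!\!(-1)^{p-1}\psi_0^p\,\Delta\,\psi_\infty^q + \psi_\infty^r\Big)
\end{align}
sums, as a sum of geometric-type series, to
\begin{align}
-\frac{1}{1+z\psi_0}-\frac{z\,\Delta}{(1+z\psi_0)(1-z\psi_\infty)}+\frac{1}{1-z\psi_\infty}\ \in\ H^*(LM_m,\QQ)[[z]],
\end{align}
where the expansions are legitimate since $1+z\psi_0$ and $1-z\psi_\infty$ have invertible constant term $1$. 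Multiplying through by $(1+z\psi_0)(1-z\psi_\infty)$ turns this expression into $z(\psi_0+\psi_\infty-\Delta)$, so it vanishes identically — equivalently, \eqref{eq: Losev Manin compatibility} holds for every $r$ — if and only if
\begin{align}\label{eq:LM-divisor}
\psi_0+\psi_\infty=\Delta\qquad\text{in}\ H^2(LM_m,\QQ).
\end{align}
Thus the task reduces to proving the divisor relation~\eqref{eq:LM-divisor}.

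To establish~\eqref{eq:LM-divisor} I would induct on $m$. For $m\le 1$ the space $LM_m$ is a point or empty and all three classes vanish, which gives the base case. For $m\ge 2$ let $\varrho\colon LM_m\to LM_{m-1}$ be the forgetful morphism dropping the light point $x_m$ (and contracting the resulting unstable component). Exactly as on $\oM_{g,n}$, one has the $\psi$-comparison formulas $\psi_0=\varrho^*\psi_0+D_{\{m\}}$ and $\psi_\infty=\varrho^*\psi_\infty+D_{\{1,\dots,m-1\}}$, where $D_S\subset LM_m$ denotes the boundary divisor whose generic point is a two-component chain carrying the light points indexed by $S$ on the component of $0$; concretely $D_{\{m\}}$ is the locus where $0$ and $x_m$ sit alone on a rational bubble, and $D_{\{1,\dots,m-1\}}$ the locus where $\infty$ and $x_m$ do. One also has $\varrho^*D_T=D_T+D_{T\cup\{m\}}$ for every boundary divisor $D_T$ of $LM_{m-1}$. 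Since $\Delta=\sum_{\emptyset\neq S\subsetneq\{1,\dots,m\}}D_S$, a short bookkeeping of which subsets occur on each side gives $\varrho^*\Delta=\sum_{\emptyset\neq T\subsetneq\{1,\dots,m-1\}}\big(D_T+D_{T\cup\{m\}}\big)$, hence $\Delta-\varrho^*\Delta=D_{\{m\}}+D_{\{1,\dots,m-1\}}$. Combining this with the two $\psi$-comparison formulas yields $\psi_0+\psi_\infty-\Delta=\varrho^*(\psi_0+\psi_\infty-\Delta)$, and~\eqref{eq:LM-divisor} follows from the inductive hypothesis.

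The only genuinely delicate point is the input to Step~2, namely the $\psi$-comparison formulas on Losev--Manin space together with the identifications $D_{\{m\}}=\{\text{bubble carrying }0,x_m\}$ and $D_{\{1,\dots,m-1\}}=\{\text{bubble carrying }\infty,x_m\}$; these come from the same local analysis of the universal curve as for moduli of stable curves, but one must be careful to track the change of trivialisation of the family along the base (a naïve local-coordinate computation near the boundary is easy to get wrong). Alternatively,~\eqref{eq:LM-divisor} is a known relation in the Picard group of $LM_m$ and can simply be quoted from the literature on Losev--Manin spaces; I would nonetheless include the short inductive argument for completeness. Once~\eqref{eq:LM-divisor} is in place, Step~1 finishes the proof of the lemma, and hence, through the vanishing $\Phi_g^r=0$, the compatibility of Equations~\eqref{eq:constant-two point function}.
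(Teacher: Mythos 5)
Your proof is correct, but it gets to the result by a different route than the paper. The paper's own proof is a three-line argument: the case $r=0$ is trivial, the case $r=1$ — i.e.\ the divisor relation $\psi_0+\psi_\infty=\Delta$ on $LM_m$ — is quoted from the expression for genus-zero $\psi$-classes in \cite[Eq.~(2), Section~2.2.5]{BSSZ}, and the higher-degree cases follow by ``a simple induction''. Your generating-series step is essentially that induction in closed form: the identity $(1+z\psi_0)(1-z\psi_\infty)\bigl(-\tfrac{1}{1+z\psi_0}-\tfrac{z\Delta}{(1+z\psi_0)(1-z\psi_\infty)}+\tfrac{1}{1-z\psi_\infty}\bigr)=z(\psi_0+\psi_\infty-\Delta)$ checks out, the prefactor is a unit in $H^*(LM_m,\QQ)[[z]]$, and at $z=-1$ your series is exactly (minus) the relation $\tfrac{1}{1-\tilde\psi_0}-\tfrac{1}{1+\tilde\psi_\infty}-\tfrac{1}{1-\tilde\psi_0}\Delta\tfrac{1}{1+\tilde\psi_\infty}=0$ that the paper invokes again in the proof of \Cref{prop:key-relation}, so the packaging is consistent with how the relation is used elsewhere. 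Where you genuinely diverge is the degree-one input: instead of citing \cite{BSSZ} you prove $\psi_0+\psi_\infty=\Delta$ from scratch by induction on the number of light points, using the comparison formulas $\psi_0=\varrho^*\psi_0+D_{\{m\}}$, $\psi_\infty=\varrho^*\psi_\infty+D_{\{1,\dots,m-1\}}$ and $\varrho^*D_T=D_T+D_{T\cup\{m\}}$ under the forgetful map $\varrho\colon LM_m\to LM_{m-1}$; the bookkeeping $\Delta-\varrho^*\Delta=D_{\{m\}}+D_{\{1,\dots,m-1\}}$ is right, so $\psi_0+\psi_\infty-\Delta$ is $\varrho$-pulled back and vanishes by induction, with the correct base case $m\le 1$. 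You correctly identify the only delicate ingredient (the $\psi$-comparison and pullback formulas on Losev--Manin space, which are standard and can be cited). What each approach buys: the paper's argument is shorter because it outsources the divisor relation to a reference, while yours is self-contained and makes transparent that the whole lemma is nothing but the single Picard-group relation $\psi_0+\psi_\infty=\Delta$ dressed up in all cohomological degrees.
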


\begin{proof}
The relation is obvious in degree $0$. In degree $1$ it follows from expressing the $\psi$-class in genus $0$ as in \cite[Eq.~(2)~Section~2.2.5]{BSSZ}. The relation in higher degree follows from a simple induction. 
\end{proof}
Denote by $\overline{\mathcal{M}}^{\sim}\left(\mathbb{P}^{1},a,-a\right)$ the moduli space of rubber stable maps to $\left(\mathbb{P}^{1},0,\infty\right)$ with profile over $0$ and $\infty$ given by $\left(a\right)$. Its projection onto the source curve is denoted by 
\begin{align}
s:\overline{\mathcal{M}}_{g}^{\sim}\left(\mathbb{P}^{1},a,-a\right)\rightarrow\overline{\mathcal{M}}_{g,2},
\end{align}
and its projection onto the target curve is denoted by 
\begin{align}
t:\overline{\mathcal{M}}_{g}^{\sim}\left(\mathbb{P}^{1},a,-a\right)\rightarrow LM_{2g}.
\end{align}
It follows from \cite[Proposition~2.5~and~Lemma~2.6]{BSSZ} that 
\[
as^{*}\left(\psi_{1}\right)=a\Psi_{1}=t^{*}\left(\psi_{0}\right),\quad{\rm and}\quad as^{*}\left(\psi_{2}\right)=a\Psi_{2}=t^{*}\left(\psi_{\infty}\right),
\]
where $\Psi_{i}$ is the psi class on $\overline{\mathcal{M}}_{g}^{\sim}\left(\mathbb{P}^{1},a,-a\right)$ at the marked point $i$. Thus, by first pulling back by $t$ \Cref{eq: Losev Manin compatibility}, and then pushing forward by $s$, we obtain a relation in $\overline{\mathcal{M}}_{g,2}$. By intersecting this relation with $\Lambda\left(-\frac{\epsilon^{2}}{i\hbar}\right)$ and $c_{g,2}\left(v_{\alpha},v_{\beta}\right)$, and then extracting the top cohomological degree, we get precisely $\Phi_{g}^{r}=0$. This proves the compatibility condition (\Cref{eq: compatibility equation}).

\appendix

\bigskip

\end{document}